\newcommand{\fg}{\mathfrak g}
\newcommand{\al}{\alpha}
\newcommand{\ptl}{\partial}
\newcommand{\C}{\mathbb{C}}
\newcommand{\N}{\mathbb N}
\newcommand{\Z}{\mathbb Z}
\newcommand{\rh}{\mathrm{h}}
\newcommand{\re}{\mathrm{e}}
\newcommand{\rf}{\mathrm{f}}
\newcommand{\ri}{\mathrm{I}}
\newcommand{\ba}{\begin {eqnarray}}
\newcommand{\ea}{\end {eqnarray}}
\newcommand{\baa}{\begin {eqnarray*}}
\newcommand{\eaa}{\end {eqnarray*}}
\newcommand{\be}{\begin {equation}}
\newcommand{\ee}{\end {equation}}
\newcommand{\bee}{\begin {equation*}}
\newcommand{\eee}{\end {equation*}}
\newcommand{\U}{\mathcal{U}}
\def\sl{\mathfrak{sl}}
\def\gl{\mathfrak{gl}}
\def \<{{\langle}}
\def \>{{\rangle}}
\def\W{\mathcal{W}}
\def \({ \left( }
\def \){ \right) }
\theoremstyle{Theorem}
\theoremstyle{Theorem}
\newtheorem{thm}{Theorem}[section]
\newtheorem{corollary}[thm]{Corollary}
\newtheorem{lemma}[thm]{Lemma}
\newtheorem{proposition}[thm]{Proposition}
\newtheorem{theorem}[thm]{Theorem}
\newtheorem{remark}[thm]{Remark}
\newtheorem{definition}[thm]{Definition}
\numberwithin{equation}{section}
\begin{document}

\title[Simple smooth modules]{Simple smooth modules over  the Lie algebras of polynomial vector fields}
\maketitle

\centerline{Zhiqiang Li,  Cunguang Cheng,  Shiyuan Liu,  }
\centerline{Rencai Lü,   Kaiming Zhao,  Yueqiang Zhao}

\begin{abstract}
	Let $\mathfrak{g}:={\rm Der}(\mathbb{C}[t_1, t_2,\cdots, t_n])$ and $\mathcal{L}:={\rm Der}(\mathbb{C}[[t_1, t_2,\cdots, t_n]])$ be the Witt Lie algebras. Clearly, $\mathfrak{g}$ is a proper subalegbra of  $\mathcal{L}$.
	Surprisingly, we prove that simple smooth modules over $\mathfrak{g}$ are exactly the simple modules over $\mathcal{L}$ studied by Rodakov (no need to take completion). Then we find an easy and elementary way to classify all  simple smooth modules over $\mathfrak{g}$. When the height $\ell_{V}\geq2$ or $n=1$, any nontrivial simple smooth
	$\mathfrak{g}$-module $V$ is isomorphic to an induced module  from a simple smooth $\mathfrak{g}_{\geq0}$-module $V^{(\ell_{V})}$.   When $\ell_{V}=1$ and $n\geq2$, any such module $V$ is the unique simple quotient of the tensor module $F(P_{0},M)$ for some simple $\gl_{n}$-module $M$, where $P_0$ is a particular simple module over the Weyl algebra $\mathcal{K}^+_n$.
	We further show that a simple   $\mathfrak{g}$-module $V$ is a smooth module if and only if the action  of each of $n$ particular vectors in  $\mathfrak{g}$ is locally finite on $V$.
\end{abstract}

Keyword: 	the Lie algebra of polynomial vector fields, Witt algebra, smooth module, simple module, height

2020 MCS: 17B10, 17B20, 17B65, 17B66, 17B68, 17B70

\section{Introduction}
In 1909, E. Cartan introduced the four classes of Cartan type Lie algebras, including Witt algebras. In 1964, I.M. Singer and S. Sternberg interpreted the work of M. S. Lie and E. Cartan  \cite{SS} in modern language. Let $n $ be a positive  integer, $A_{n}^{+}=\mathbb{C}[t_{1},t_{2},\dots,t_{n}]$ be the polynomial algebra. The derivation Lie algebra $\mathcal{W}_{n}^{+}=\text{Der}(A_{n}^{+})$ is the Witt Lie algebra or  the Lie algebra of polynomial vector fields.

Simple weight modules with finite-dimensional weight spaces over $\mathcal{W}_{1}^{+}$ were classified  by O. Mathieu \cite{M}. The classification of such modules over $\mathcal{W}_{n}^{+}$ ($n\geq2$) had been a long-standing open problem. In 1999, a complete description of the supports of all simple weight modules of $\mathcal{W}_{n}^{+}$ was given by I. Penkov and V. Serganova  \cite{PS}.
Very recently, based on the classification of simple uniformly bounded  $\mathcal{W}_{n}^{+}$-modules \cite{XL} and the description of the weight set of simple weight  $\mathcal{W}_{n}^{+}$ modules \cite{PS}, D. Grantcharov and V. Serganova \cite{GS} completed the classification of simple Harish-Chandra  $\mathcal{W}_{n}^{+}$-modules. However, non-weight modules and weight modules that are not Harish-Chandra modules over $\mathcal{W}_{n}^{+}$ are not well developed.

Recently, some non-weight modules over $\mathcal{W}_{n}^{+}$ have been widely studied. For example, H. Tan and K. Zhao \cite{TZ} classified the $U(\mathfrak{h})$-free modules of rank 1 over $\mathcal{W}_{n}^{+}$. Y. Zhao and G. Liu \cite{ZL} also classify all simple non-singular Whittaker $\mathcal{W}_{n}^{+}$-modules with finite dimensional Whittaker vector spaces using $\mathfrak{gl}_{n}$-modules.

The concept of smooth modules was first introduced  by D. Kazhdan and G. Lusztig  for   affine Kac-Moody algebras \cite{KL1, KL2} in 1993.
To some extent, smooth modules are generalizations of both highest weight modules and Whittaker modules.
For the last decade, the study on simple smooth  module over various Lie algebra is very active. Complete classifications on simple smooth  module were achieved over the following Lie algebras:  the Virasoro algebra   \cite{MZ}, the Virasoro superalgebras  \cite{LPX, CSYZ}. Partial results were obtained  for the following Lie algebras, including
the mirror  Heisenberg-Virasoro algebra   \cite{LPXZ}, the Fermion-Virasoro algebra   \cite{XZ},  the Heisenberg-Virasoro algebra   \cite{TYZ}, the planer Conformal Galeilo algebra   \cite{GG}, super $\mathscr{W}$-algebra $\mathscr{SW}(\frac{3}{2},\frac{3}{2})$ of Neveu-Schwarz type   \cite{LS}, and some other Lie superalgebras related to the Virasoro algebra.

In 1974,   A.N. Rudakov  \cite{R} began the study of topological simple representations of   Cartan type  Lie algebras $\mathcal{L}$ over the formal power series $\mathbb{C}[[t_{1},t_{2},\dots,t_{n}]]$ (not over polynomials). These modules were called continuous modules by Rudakov which are not smooth modules over  $\mathcal{L}$ since it does not have any good $\mathbb Z$-gradations. His main result, roughly speaking, is that all irreducible representations which satisfy a natural continuity condition can be described explicitly as quotients of induced modules.  Surprisingly, we prove that these modules over  $\mathcal{L}$ are exactly simple smooth modules over   the derivation Lie algebra $\mathfrak{g}=\mathcal{W}_{n}^{+}$ of the ring of polynomials in $n$ variables $\C[t_{1},t_{2},\dots,t_{n}]$. But Rudakov's  complicated  method involving simplicial  systems  is hard to understand.
We will establish
new, easy and  fairly  elementary methods to determine all
simple smooth modules over the derivation Lie algebra $\mathcal{W}_{n}^{+}$ in this paper. Thus, our methods  can easily  simplify  Rudakov's proof. For the readers' convenience, we will try to use as much Rodukov's notations as possible
to compare the two methods.  Unlike some Lie algebras, the Witt algebra does not contain the Virasoro algebra as a subalgebra. Therefore, it is not possible to construct the structure of a vertex algebra on the smooth modules of the Witt algebra \cite{L} .

The paper is arranged as follows. In Section 2, we prove that simple continuous modules   over  $\mathcal{L}$   are exactly simple smooth modules over $\mathfrak{g}$.
In Section 3, we collect some basic notations
and establish some elementary results for later use. In Section 4, we introduce an easy and elementary method to show that all simple smooth $\mathcal{W}_{n}^{+}$
modules $V$ with   height $\ell_V \geq 2$   {or $n=1$} are some induced modules from simple modules over some finite-dimensional  Lie algebras (which is not solvable or reductive); see Theorem \ref{main-result} and   {Lemma \ref{n=1}}. In Section 5, we list all simple smooth $\mathcal{W}_{n}^{+}$
modules $V$ with  $\ell_V =1$ and   {$n\geq2$} in terms of simple $\gl_n$-modules; see Theorem \ref{the second main-result}. Combining {Proposition \ref{Property-Prop}},  Theorem \ref{main-result}, {Lemma \ref{n=1}} and Theorem \ref{the second main-result} we obtain all simple smooth
modules over $\mathcal{W}_{n}^{+}$, see Theorem \ref{result}.
In Section 6, we prove that    a simple  module   $V$ over $\mathcal{W}_{n}^{+}$ is a smooth module if and only if there are positive integers $k_1,k_2,\dots,k_{n}$ such that each of 
$t_{1}^{k_{1}+1}\partial_{1},t_{2}^{k_{2}+1}\partial_{2},\dots,t_{n}^{k_{n}+1}\partial_{n}$
acts locally finitely on $V$.
In Section 7, we construct   examples of simple smooth $\mathcal{W}_{n}^{+}$ modules $V$ with   height $\ell_{V}=1$ and $2$ respectively.

Throughout this paper, $\Z$, $\Z_{+}$, $\N$, $\C$ and $\C^{\times}$
denote the sets of integers, non-negative integers, positive integers,
complex numbers and nonzero complex numbers, respectively. All the vector spaces are over $\C$. For a Lie algebra $\mathfrak{t}$,
we denote the   universal enveloping algebra of $\mathfrak{t}$ by $\mathcal{U}(\mathfrak{t})$.
For any $\Z$-graded vector space $V=\bigoplus_{n\in\Z}V_{n}$,
we use the notation $V_{\geq k}:=\bigoplus_{n\geq k}V_{n}$ and $V_{\leq k}:=\bigoplus_{n\leq k}V_{n}$.

\section{Simple modules over the two classes of Witt algebras}

In this section, we shall   prove that simple smooth modules over $\mathfrak{g}$ are exactly the simple modules over $\mathcal{L}$ studied by Rodakov.

Let $A_{n}^{+}=\C[t_{1},t_{2},\dots,t_{n}]$ be the polynomial algebra in variables $t_1,t_2,\dots,t_n$  over $\C$ and let $\W_{n}^{+}$ be the derivation Lie algebra of $A_n^+$.
For $1\leq i\le n$, we denote by $\partial_i=\partial/\partial t_i$,
the partial derivation operator with respect to $t_i$.
For $\alpha=(\alpha_1,\alpha_2,\dots,\alpha_n)\in\Z_+^n$, set
\[t^\alpha=t_1^{\alpha_1}t_2^{\alpha_2}\cdots t_n^{\alpha_n}\ \text{and}\
\partial^\alpha=\partial_1^{\alpha_1}\partial_2^{\alpha_2}\cdots \partial_n^{\alpha_n}  .\]
We have
\[\W_{n}^{+}=\text{Span}_{\C}\{t^\alpha\partial_i\mid \alpha\in\Z_{+}^n,\ i=1,2,\dots,n\}\]
and the Lie brackets in $\W_{n}^{+}$ are given by
\begin{align}\label{lie-bracket}
	[t^{\alpha}\partial_{i},t^{\beta}\partial_{j}]=t^{\alpha}\partial_{i}(t^{\beta})
	\partial_{j}-t^{\beta}\partial_{j}(t^{\alpha})\partial_{i}.
\end{align}
In the rest of this paper, we write $\fg=\W_n^+$ for convenience.

Denote by $\epsilon_{i}$ the $n$-th standard vector in $\C^n$, whose $i$-th entry is $1$ and $0$ elsewhere for $i=1,2,\dots,n$.
For $\alpha=(\alpha_1,\alpha_2,\dots,\alpha_n)\in\Z^n_+$, set
\[|\alpha|=\alpha_1+\alpha_2+\cdots+\alpha_n,\]
called the {\bf  size} of $\alpha$. Let $\fg_{k}=0$ for $k<-1$ and
\[\fg_k=\text{Span}_{\C}\{t^{\alpha}\partial_{i}\mid \alpha\in\Z_{+}^n,\ |\alpha|=k+1,\ i=1,2,\dots,n\}\]
for $k\geq-1$. This gives a natural $\Z$-gradation on $\fg$:
\[\fg=\bigoplus_{k\in\Z}\fg_k.\]
It is clear that $\fg_{\geq r}$ is a subalgebra of $\fg$ for all $r\in\Z$.

\begin{definition}
	A $\fg$-module (or $\fg_{\geq0}$-module) $V$ is called a {\bf smooth} module
	if for any $v\in V$ there exists $n\in\Z_{+}$ such that $\fg_{i}.v=0$ for all $i\geq n$.
\end{definition}

The concept was first introduced  by D. Kazhdan and G. Lusztig  for   affine Kac-Moody algebras \cite{KL1, KL2} in 1993.

Let $Q_{n}^{+}=\C[[t_{1},t_{2},\dots,t_{n}]]$ be the ring of formal power series in $n$ variables $t_1,t_2,\dots,t_n$  over $\C$ and let $\mathcal{L}$ be the derivation Lie algebra of $Q_{n}^{+}$. Any $D \in \mathcal{L}$ has the form
$$D=\sum_{i=1}^{n}f_i\frac{\partial}{\partial t_i}, \quad f_i \in Q_{n}^{+}.$$
It is clear that $\mathfrak{g}$ is a proper subalegbra of  $\mathcal{L}$. Actually, $ \mathcal{L}$ is the completion of $\fg$.
The algebra $\mathcal{L}$ does not have any good $\mathbb{Z}$-gradation. But it has the filtration
$$\mathcal{L}_j = \{D \mid \text{deg} f_i \geq j+1\}$$$(\mathcal{L}= \mathcal{L}_{-1}\supset\mathcal{L}_{0}\supset\cdots)$,
where deg is defined as the power of the monomial with the lowest degree in the power series.

\begin{definition}\cite[Lemma 2.1]{R}
	A $\mathcal{L}$-module $V$ is called a {\bf continuous module} provided that for every $x\in V$ there exists a positive integer $k$ such that
	$
	\mathcal{L}_k\cdot x = 0.
	$
\end{definition}

\begin{theorem}\label{g_0-induce-module-height-geq2}
	(a) If $V$ is a simple continuous  $\mathcal{L}$-module, then $V$ is naturally a simple smooth module over $\fg$.
	
	(b) If $V$ is a simple smooth $\fg$-module, then $V$ is naturally a simple continuous module over $\mathcal{L}$.
\end{theorem}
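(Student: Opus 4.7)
The key observation driving the whole proof is a precise compatibility between the $\mathbb{Z}$-grading of $\fg$ and the descending filtration $\{\mathcal{L}_n\}$ of $\mathcal{L}$: the inclusion $\fg \hookrightarrow \mathcal{L}$ satisfies $\fg \cap \mathcal{L}_n = \fg_{\geq n}$ and, since every power series is congruent modulo $\mathcal{L}_n$ to its truncation through total degree $n$, it induces a Lie algebra isomorphism
\begin{equation*}
\fg/\fg_{\geq n} \;\cong\; \mathcal{L}/\mathcal{L}_n
\end{equation*}
for every $n\geq 0$. Under this identification, smoothness for a $\fg$-module and continuity for an $\mathcal{L}$-module are expressions of the same vector-by-vector finiteness, and the theorem amounts to a systematic transfer of structure through this quotient.

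Part (a) is the easier direction. Smoothness of the restricted action is immediate from the inclusions $\fg_k\subset\mathcal{L}_k\subset\mathcal{L}_n$ for $k\geq n$. For $\fg$-simplicity, I take a nonzero $\fg$-submodule $W$ and any $w\in W$ with $\mathcal{L}_n\cdot w=0$. For any $D\in\mathcal{L}$, truncating its power-series coefficients through degree $n$ writes $D=X+D'$ with $X\in\fg$ and $D'\in\mathcal{L}_n$, so $D\cdot w=X\cdot w\in W$; thus $W$ is automatically $\mathcal{L}$-stable and must equal $V$.

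Part (b) is where the real work sits, because one must construct new structure rather than just restrict. Given a simple smooth $\fg$-module $V$, I define $D\cdot v := X\cdot v$ for any lift $X\in\fg$ of $D\bmod{\mathcal{L}_n}$, where $n$ is chosen so that $\fg_{\geq n}\cdot v=0$. Well-definedness (independence of $n$ and of the lift), continuity $\mathcal{L}_n\cdot v=0$, agreement with the original $\fg$-action, and $\mathcal{L}$-simplicity (any $\mathcal{L}$-submodule is automatically a $\fg$-submodule) are all routine once well-definedness is in place. The main obstacle is verifying the Lie-bracket identity
\begin{equation*}
[D_1,D_2]\cdot v = D_1\cdot(D_2\cdot v) - D_2\cdot(D_1\cdot v).
\end{equation*}
My plan is to enlarge $n$ so that $\fg_{\geq n}$ annihilates each of $v$, $D_1\cdot v$, $D_2\cdot v$ simultaneously, choose lifts $X_i\in\fg$ of $D_i$ modulo $\mathcal{L}_{n+1}$, and invoke the elementary filtration estimate $[\mathcal{L}_i,\mathcal{L}_j]\subset\mathcal{L}_{i+j}$ to conclude $[X_1,X_2]\equiv[D_1,D_2]\pmod{\mathcal{L}_n}$. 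The identity for $\fg$ acting on $V$, applied to $X_1,X_2$, then upgrades for free to the desired identity for $\mathcal{L}$, and everything else is unwinding definitions.
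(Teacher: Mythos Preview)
Your proof is correct and follows essentially the same approach as the paper. Both arguments hinge on the fact that for any $v$ annihilated by $\mathcal{L}_n$ (resp.\ $\fg_{\geq n}$), the action of $D\in\mathcal{L}$ on $v$ coincides with that of its polynomial truncation in $\fg$; the paper phrases this via explicit truncated sums $D=\sum x_i$ and computes the bracket identity by expanding a double sum with a suitably enlarged cutoff $N'$, while you phrase the same computation in the cleaner language of lifts through the isomorphism $\fg/\fg_{\geq n}\cong\mathcal{L}/\mathcal{L}_n$ and the filtration estimate $[\mathcal{L}_{-1},\mathcal{L}_{n+1}]\subset\mathcal{L}_n$---the content is identical.
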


\begin{proof}
	(a) Since $\mathfrak{g}$ is a   subalgebra of  $\mathcal{L}$, $V$ is naturally a module over $\fg$. Since $\mathfrak{g}_{\ge k}\subset \mathcal{L}_k$ for any positive integer $k$,
	$V$  is a smooth module over $\fg$.
	
	For any $D\in \mathcal{L}$, we write
	$$D=\sum_{i=-1}^\infty x_i, \text{ where }x_i\in \fg_i.
	$$
	For any $v\in V$, there is $k\in\mathbb{N} $ such that $
	\mathcal{L}_k\cdot x = 0.
	$
	Then we can define that $Dv=\sum\limits_{i=-1}^k x_iv=yv,$ where $y=\sum\limits_{i=-1}^k x_i\in \fg$. We see that the action of any $D\in \mathcal{L}$ on a vector $v\in V$ is the same as an element $y\in \fg$ acts on $v$. So $V$ is also a simple module over $\fg$.
	
	(b) We  know that any $D\in \mathcal{L} $ can be written as $$D=\sum_{i=-1 }^{\infty}x_i, \ \text{where} \ x_i \in \fg_i.$$
	For any $v \in V$, since $V$ is a smooth module over $\fg$,  there exists a $N \in \mathbb{Z}_+$ such that $
	\mathfrak{g}_{\geq N}v = 0$.
	Then we can naturally define the action of $\mathcal{L}$ on $V$ as follows
	\begin{equation}\label{2.2}
		Dv:= \sum_{i=-1 }^{\infty}x_iv= \sum_{i=-1 }^{N}x_iv\in V.
	\end{equation}
	Let $x=\sum\limits_{i=-1}^{\infty} x_i, y=\sum\limits_{j=-1}^{\infty} y_j\in\mathcal{L}$ where $x_i \in \fg_i, y_i\in \fg_j$,  and $v\in V$. Assume that $\mathfrak{g}_{\ge N}v=0$ for some positive integer $N$. One can choose $N'>N$ such that
	\begin{equation*}
		\mathfrak{g}_{\ge N'}x_kv=	\mathfrak{g}_{\ge N'}y_kv=0,~\text{for} \ -1\le k\le N.
	\end{equation*}
	Then we have
	\begin{align*}
		[x,y]v&=\sum_{k=-1}^{N}\left(\sum_{ i+j=k}[x_i,y_j]\right)v=\sum_{k=-1}^{2N'}\left(\sum_{-1\le i,j\le N'\atop i+j=k}[x_i,y_j]\right)v
		=\sum_{i=-1}^{N'}\sum_{j=-1}^{N'}[x_i,y_j]v\\
		&=\sum_{i=1}^{N'}\sum_{j=1}^{N'}\left(x_iy_j-y_jx_i\right)v
		=\sum_{i=-1}^{N'}x_i\left(\sum_{j=-1}^{N'}y_jv\right)-\sum_{j=-1}^{N'}y_j\left(\sum_{i=-1}^{N'}x_iv\right)\\
		&=x(yv)-y(xv).
	\end{align*}
	As a result, $V$ becomes an $\mathcal{L} $-module under the action \eqref{2.2}.
	So $V$ is also a simple continuous module over $\mathcal{L}$.
\end{proof}

\section{Preliminary results}

In this section, we shall introduce and establish some notations and preliminary results for Witt algebras and their smooth modules.

\begin{definition}
	
	A $\fg$-module $V$ is called a {\bf weight module} provided that the action of $\oplus_{i=1}^{n}\C t_{i}\partial_{i}$
	on $V$ is diagonalizable.\end{definition}

\begin{definition} A weight $\fg$-module $V$ is called a {{\bf highest weight module}} if there exists a nonzero vector $u\in V$ such that
	
	(1). $(t_{i}\partial_{i}).u=\lambda_{i}u$ for some $\lambda_{i}\in\C$, $i=1,2,\dots,n$,
	
	(2). 	$\fg_{\geq1}.u=0,$
	
	(3). $  V=\U(\fg)u.$
	
	The vector $u$ is called a highest weight vector.
\end{definition}

Let $V$ be a smooth $\fg$-module (or $\fg_{\geq0}$-module). For $r\in\Z_{+}$, set
\begin{align}
	V^{(r)}=\{v\in V\mid \fg_{\geq r}.v=0\}.
\end{align}
Then $V^{(r)}$ is a subspace of $V$ and
\[V^{(0)}\subseteq V^{(1)}\subseteq V^{(2)}\subseteq\cdots.\]
Define $\ell_{V}=\mathrm{min}\{r\in\Z_{+}\mid V^{(r)}\neq 0\}$ which is called the {\bf height} of $V$.
We say a smooth $\fg_{\geq0}$-module $V$ to be {\bf homogenous} if $V=V^{(\ell_{V})}$.
These two concepts here were defined  in \cite{R} for his algebras.

\begin{proposition}\label{G-0-module}
	For a smooth $\fg$-module (or $\fg_{\geq0}$-module) $V$, the subspace  $V^{(r)}$ is
	a $\fg_{\geq0}$-module for all $r\in\Z_{+}$.
\end{proposition}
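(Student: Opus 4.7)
The plan is to verify directly that $V^{(r)}$ is stable under the action of $\fg_{\geq 0}$. Concretely, fix $v\in V^{(r)}$ and $x\in\fg_{\geq 0}$, and show $x.v\in V^{(r)}$, i.e.\ $y.(x.v)=0$ for every $y\in\fg_{\geq r}$. By linearity, I reduce to the case of homogeneous $x\in\fg_j$ with $j\geq 0$ and homogeneous $y\in\fg_i$ with $i\geq r$.

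The one identity I would invoke is $y.(x.v)=x.(y.v)+[y,x].v$. The first summand vanishes because $y\in\fg_{\geq r}$ and $v\in V^{(r)}$. For the second, I use that the decomposition $\fg=\bigoplus_{k\in\Z}\fg_k$ is a Lie algebra grading (which is obvious from the bracket formula \eqref{lie-bracket}, since $t^{\alpha}\partial_i\in\fg_{|\alpha|-1}$ and the right-hand side of \eqref{lie-bracket} is a sum of terms of total degree $|\alpha|+|\beta|-2$). Hence $[y,x]\in\fg_{i+j}\subseteq\fg_{\geq r}$ because $i\geq r$ and $j\geq 0$, and so $[y,x].v=0$ since $v\in V^{(r)}$. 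This proves $y.(x.v)=0$, whence $x.v\in V^{(r)}$.

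The argument works verbatim whether $V$ is a $\fg$-module or a $\fg_{\geq 0}$-module, since only the bracket $[\fg_{\geq 0},\fg_{\geq r}]\subseteq\fg_{\geq r}$ is used. There is essentially no obstacle here: the statement is a standard "graded-filtration-stability" fact that follows from two lines once one observes that $\fg_{\geq r}$ is an ideal of $\fg_{\geq 0}$ (and in fact of $\fg_{\geq -r}$). I would present the proof in one short paragraph, deriving $y.(x.v)=0$ from the commutator formula and the gradation.
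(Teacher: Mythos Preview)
Your proof is correct and follows essentially the same approach as the paper: both take $v\in V^{(r)}$, $x\in\fg_{\geq 0}$, and for $y\in\fg_{\geq r}$ compute $y.(x.v)=[y,x].v+x.(y.v)=0$ using that $[y,x]\in\fg_{\geq r}$. The paper's version is slightly terser (it does not reduce to homogeneous elements), but the argument is the same commutator computation.
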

\begin{proof} We only check the case that $V$ is a smooth $\fg$-module.
	Assume that $v\in V^{(r)}$ and $x\in\fg_{\geq 0}$. We will show that $x.v\in V^{(r)}$.
	
	For any $y\in\fg_{\ge r}$, we get $[y,x]\in\fg_{\ge r}$. Then
	\[y.x.v=[y,x].v+x.y.v=0.\]
	This gives that $x.v\in V^{(r)}$, as desired.
\end{proof}

Now we give a property for smooth modules over $\fg$.
\begin{proposition}\label{Property-Prop}
	Let $V$ be a simple nontrivial smooth $\fg$-module.
	Then   $\ell_{V}\ge 1$.
\end{proposition}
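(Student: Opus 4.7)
The plan is to argue by contradiction: suppose $V^{(0)}\neq 0$ and fix a nonzero $v\in V^{(0)}$, so that $\fg_{\ge 0}.v=0$. Since $V$ is simple, $V=\U(\fg).v$, and since every element of $\fg_{\ge 0}$ kills $v$ one has $\U(\fg_{\ge 0}).v=\C v$. The PBW decomposition $\U(\fg)=\U(\fg_{-1})\U(\fg_{\ge 0})$ then gives $V=\U(\fg_{-1}).v$. Consequently there is a surjective $\fg$-homomorphism $\phi:M\twoheadrightarrow V$, $1\ot 1\mapsto v$, from the induced module
\[M:=\U(\fg)\ot_{\U(\fg_{\ge 0})}\C_v,\]
where $\C_v$ denotes the trivial $1$-dimensional $\fg_{\ge 0}$-module.

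The heart of the proof is to show that $M$ has a unique maximal proper submodule and that the quotient by it is the trivial $\fg$-module; this will force $V$ to be trivial and contradict the hypothesis. I would use the Euler element $D:=\sum_{i=1}^{n}t_i\ptl_i\in\fg_0$. From \eqref{lie-bracket} one checks $[D,t^\al\ptl_j]=(|\al|-1)t^\al\ptl_j$, so $D$ acts as the scalar $k$ on $\fg_k$. By PBW, $\{\ptl^\al\ot 1:\al\in\Z_+^n\}$ is a basis of $M$, and since $D.(1\ot 1)=0$ the derivation property of the commutator gives
\[D.(\ptl^\al\ot 1)=[D,\ptl^\al].(1\ot 1)=-|\al|\,\ptl^\al\ot 1.\]
Thus $M=\bigoplus_{m\ge 0}M_{-m}$ with $M_0=\C(1\ot 1)$ and $M_{-m}=\mathrm{Span}\{\ptl^\al\ot 1:|\al|=m\}$.

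The final step is a weight argument. Any submodule $N\subseteq M$ is $D$-stable, hence $N=\bigoplus_m(N\cap M_{-m})$. If $N$ is proper then $1\ot 1\notin N$ (otherwise $\U(\fg).(1\ot 1)=M\subseteq N$), so $N\cap M_0=0$ and therefore $N\subseteq M^\circ:=\bigoplus_{m\ge 1}M_{-m}$. Hence $M^\circ$ is the unique maximal proper submodule of $M$. On the quotient $M/M^\circ$, $\fg_{\ge 0}$ already annihilates $1\ot 1$, and $\fg_{-1}$ sends $M_0$ into $M_{-1}\subseteq M^\circ$, so all of $\fg$ acts trivially. Therefore $V=M/\ker\phi=M/M^\circ$ is the trivial one-dimensional module, contradicting non-triviality, and we conclude $V^{(0)}=0$, i.e., $\ell_V\ge 1$.

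I do not expect a serious obstacle: the only checks are the commutation $[D,\fg_k]=k\fg_k$, the PBW-basis description of $M$, and the observation that a proper submodule of $M$ cannot meet the one-dimensional weight space $M_0$. All are direct; the only place where one must be careful is the reduction $V=\U(\fg_{-1}).v$, which depends on the simplicity of $V$ together with $\U(\fg_{\ge 0}).v=\C v$.
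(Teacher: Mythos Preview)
Your argument is correct and follows exactly the paper's approach: both induce from the trivial one-dimensional $\fg_{\ge0}$-module, identify $M^\circ=\fg_{-1}\,\U(\fg_{-1})(1\otimes1)$ as the unique maximal submodule, and conclude $V\cong M/M^\circ$ is one-dimensional and hence trivial. You actually supply more detail than the paper (the $D$-grading argument that every proper submodule misses $M_0$); the one step both you and the paper leave implicit is that $M^\circ$ is itself a $\fg$-submodule, i.e.\ that $\fg_m.M_{-m}=0$ for $m\ge1$, which is a short induction on $m$ using $x\partial_i=\partial_ix+[x,\partial_i]$.
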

\begin{proof}
	Suppose $\ell_{V}=0$. Choose any $0\neq u\in V^{(0)}$. By Poincar-Birkhoff-Witt theorem,
	we have $V=\mathcal{U}(\fg_{-1})u$. Notice that $\fg_{\geq0}.u=0$. One can form the induced $\fg$-module $W=\text{Ind}_{\fg_{\geq0}}^{\fg}\C u$. Since $W$ has the unique maximal submodule
	$J=\fg_{-1}\mathcal{U}(\fg_{-1})u$, it is clear that $V\cong W/J$.
	This implies that $V$ is $1$-dimensional. Note that $\fg$ is simple. Then $V$ is a trivial $\fg$-module,
	which contradicts the assumption that $V$ is a nontrivial $\fg$-module. Thus  $\ell_{V}\geq1$.
\end{proof}

\section{The case that $\ell_{V}\geq2$}
In this section, we study   nontrivial simple smooth $\fg$-modules with height $\geq2$.
Due to  Propositions \ref{G-0-module} and \ref{Property-Prop}  we will start from   induced
modules over $\fg$.

\subsection{Induced representations}

In this subsection, let $E$ be a homogenous $\fg_{\geq0}$-module with $\ell_{E}\geq1$.
Actually $E$ is a nontrivial module. We form the induced
$\fg$-module $\text{Ind}_{\fg_{\geq0}}^{\fg}E$ and denote it by $F$, i.e.,
\begin{align} F=\text{Ind}_{\fg_{\geq0}}^{\fg}E.\end{align}
We remark that Rudakov    defined $F$ for his algebras in \cite{R}.
In this subsection we will first show that any nonzero submodule of $F$ has a nonzero intersection with $E$ provided $\ell_E\ge2$.
For any $m\in\Z_{+}$, we set
 \begin{align}\label{gradation-filtration}
 F_{m}=\underset{\alpha\in\Z_{+}^{n} \atop |\alpha|=m}{\oplus}\partial^{\alpha}\otimes E.\end{align}
Note that $F_0=E$. For convenience, let $F_{m}=0$ for $m<0$.
Now the vector space $F$ is $\Z$-graded with \[F=\underset{m\in\Z}{\oplus}F_{m}.\]

For any $w\in F$, we can uniquely write
\[w=\sum_{\alpha\in\Z_{+}^{n}}\partial^{\alpha}w_{\alpha},\]
where all $w_{\alpha}\in E$ and only finitely many of the $w_{\alpha}$'s are nonzero. Furthermore, if $w\neq0$, define
\begin{equation*}
	\begin{split}
		\mathrm{Supp}(w)=\Big\{\alpha\in{\mathbb{Z}^n_+}\,\big|\, w_{\alpha}\neq0\Big\},
\quad \mathrm{ht}(w)=\max\Big\{|\alpha|\,\big| \,w_{\alpha}\neq0\Big\},\\
		\mathrm{top}(w)=\sum_{\substack{\alpha\in{{\mathbb{Z}^n_+}}\\|\alpha|=m}}\partial^{\alpha}w_{\alpha},\quad \mathrm{lth}(w)=\Big|\big\{\alpha\,\big|\,w_{\alpha}\neq0,\,|\alpha|=m\big\}\Big|,
	\end{split}
\end{equation*}
where $m={\rm ht}(w)$.
Then we have $w=\mathrm{top}(w)+F_{\leq m-1}$, that is, $w=\mathrm{top}(w)+w'$ for some $w'\in F_{\leq m-1}$.

\begin{lemma}\label{NOT-submodule}
For any $k\geq \ell_{E}-1$, one has \[\fg_{\geq k}.F_{\leq r}\subseteq
F_{\leq r-k+\ell_{E}-1},\quad r\in\Z_{+}.\]
\end{lemma}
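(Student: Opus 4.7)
The plan is to prove, by induction on $r\in\Z_+$, a strengthened statement that covers every graded piece of $\fg$ at once: for each $k\ge -1$,
\[
\fg_k\cdot F_{\le r}\ \subseteq\ \begin{cases} F_{\le r+1} & \text{if } k=-1,\\ F_{\le r} & \text{if } 0\le k\le \ell_E-1,\\ F_{\le r-k+\ell_E-1} & \text{if } k\ge \ell_E-1.\end{cases}
\]
The middle and upper bounds coincide at $k=\ell_E-1$, so specializing to $k\ge\ell_E-1$ recovers the lemma.

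Using the PBW identification $F\cong\U(\fg_{-1})\ot E$, it suffices to compute the action of a single spanning vector $x=t^\beta\partial_j\in\fg_k$ (with $|\beta|=k+1$) on a monomial $\partial^\al\ot e\in F_r$. Iterating the elementary relation $[\partial_i,t^\beta\partial_j]=\beta_i t^{\beta-\epsilon_i}\partial_j$ yields the Leibniz-type identity
\[
[\partial^\al,\,t^\beta\partial_j]\ =\ \sum_{0<\mu\le \al\wedge\beta}\binom{\al}{\mu}\,\frac{\beta!}{(\beta-\mu)!}\,t^{\beta-\mu}\partial_j\,\partial^{\al-\mu},
\]
and hence
\[
x\cdot(\partial^\al\ot e)\ =\ \partial^\al\ot (x.e)\ -\ \sum_{0<\mu\le \al\wedge\beta}c_\mu\,(t^{\beta-\mu}\partial_j)\cdot(\partial^{\al-\mu}\ot e),
\]
where each $t^{\beta-\mu}\partial_j$ lies in $\fg_{k-|\mu|}$ and acts on the strictly lower-degree element $\partial^{\al-\mu}\ot e\in F_{r-|\mu|}$. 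The base case $r=0$ is immediate: $\partial_j$ raises $F$-degree by one; $\fg_{\ge 0}$ preserves $E=F_0$; and $\fg_{\ge\ell_E}$ annihilates $E$ by homogeneity. The inductive step is then a matter of applying the three-case hypothesis to each summand above and checking that every contribution fits into the asserted target.

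The bookkeeping subtlety, and the main obstacle, is the case where $k\ge\ell_E$ and $|\mu|$ is large enough that $k-|\mu|$ drops strictly below $\ell_E-1$: here only the weaker middle bound is available for the subordinate term, but since in this regime $|\mu|\ge k-\ell_E+2$ one still has $F_{\le r-|\mu|}\subseteq F_{\le r-k+\ell_E-1}$. This is precisely why the induction must be stated in the three-case form; with only the naive hypothesis (the upper case alone) the recursion fails to close, because the commutator formula produces subordinate factors $t^{\beta-\mu}\partial_j$ whose index may lie anywhere in $[-1,k-1]$. The diagonal term $\partial^\al\ot(x.e)$ lands in $F_r$ when $k\le \ell_E-1$, which fits the target, and vanishes for $k\ge \ell_E$ by homogeneity of $E$, closing the induction.
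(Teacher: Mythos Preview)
Your argument is correct. Both your proof and the paper's are inductions on $r$ that reduce via commutators with $\partial$-factors, but the organization differs.

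The paper peels off a \emph{single} $\partial_i$ at a time: writing $F_{\le m}=\sum_i \partial_i F_{\le m-1}$ and using only $[x,\partial_i]\in\fg_{\ge k-1}$, so that each step of the induction invokes just one commutator. For the base of the $k$-range, namely $k=\ell_E-1$, the paper simply cites as a known fact that each $F_{\le r}$ is a $\fg_{\ge 0}$-submodule. This keeps the argument to a few lines.

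You instead expand the entire commutator $[\partial^\alpha,x]$ at once via the multi-index Leibniz identity, which produces subordinate factors $t^{\beta-\mu}\partial_j$ whose grading can land anywhere in $[-1,k-1]$. That forces you to carry the three-case strengthened hypothesis through the induction. The payoff is that your argument is self-contained: the $\fg_{\ge 0}$-stability of $F_{\le r}$ (the paper's unproved ``fact'') is the middle case of your induction rather than an external input. The cost is more bookkeeping. One small remark: in your discussion of the subtlety you mention only the ``middle bound'' when $k-|\mu|$ drops below $\ell_E-1$, but when $|\mu|=k+1$ the relevant bound is the $k=-1$ case, giving $F_{\le r-|\mu|+1}=F_{\le r-k}$; this still lies in $F_{\le r-k+\ell_E-1}$ since $\ell_E\ge 1$, so the argument closes, but it is worth saying explicitly.
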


\begin{proof}
The proof is by double induction, first  on $r$,  for fixed $r$, then by induction on $k$.

For $r=0$, this is clear since $E$ is a homogenous $\fg_{\geq0}$-module with height $\ell_{E}\geq1$.

Fix $m\in\N$. We assume that the conclusion holds for all $0\leq r\leq m-1$, i.e., \[\fg_{\geq k}.F_{\leq r}\subseteq
F_{\leq r-k+\ell_{E}-1},\quad 0\leq r\leq m-1.\]

For the case that $r=m$ and $k=\ell_{E}-1$, the assertion follows from the fact that $F_{\leq r}$ is a $\fg_{\geq0}$-module for any $r\in\Z_{+}$.

Now let $x\in \fg_{\geq k}$ and let $k>\ell_{E}-1$. It follows from \eqref{gradation-filtration} that
\[F_{\leq m}=\sum_{i=1}^{n}\partial_{i}.F_{\leq m-1}\quad \text{and}
\quad x.F_{\leq m}=\sum_{i=1}^{n}x.\partial_{i}.F_{\leq m-1}.\]
For $1\leq i\leq n$, by the induction hypothesis, we have
\begin{eqnarray*}
x.\partial_{i}.F_{\leq m-1}&=&\partial_{i}.x.F_{\leq m-1}+[x,\partial_{i}].F_{\leq m-1}\\
&\subseteq&\partial_{i}.F_{\leq m-1-k+\ell_{E}-1}+F_{\leq m-1-(k-1)+\ell_{E}-1}\\
&\subseteq&F_{\leq m-k+\ell_{E}-1}.
\end{eqnarray*}
That is $x.F_{\leq m}\subseteq F_{\leq m-k+\ell_{E}-1}$, as desired.
\end{proof}

We remark that Rudakov    proved a similar result as in the above lemma   for his algebras in \cite{R}. 

Now we suppose that $W$ is a nonzero   submodule of $F$. Take a nonzero vector
\begin{equation}\label{omega}
\omega=\sum_{\alpha\in\Z_{+}^{n}}\partial^{\alpha}v_{\alpha}\in W\ \text{with minimal}\ m:=\mathrm{ht}(\omega),\end{equation}
where all $v_{\alpha}\in E$ and only finitely many of the $v_{\alpha}$'s are nonzero.
We may further assume that lth$(\omega)$ is minimal. We aim to prove that $\omega\in{E}$, i.e., $m=0$.
This gives that $W\cap E\neq\{0\}$. To the contrary, we assume that $m\geq1$.
Next we want to deduce some contradictions. We need several auxiliary lemmas.

Denote by $\epsilon_{i}$ the $n$-th standard vector in $\C^n$, whose $i$-th entry is $1$ and $0$ elsewhere for $i=1,2,\dots,n$.

\begin{lemma}\label{minimality-height-vector}
For any $x\in\mathfrak{g}_{\ell_E}$, we have
\[x.\omega = 0.\]
\end{lemma}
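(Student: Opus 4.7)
The plan is to combine Lemma~\ref{NOT-submodule} with the minimality built into the choice of $\omega$. The only mechanism in play is the following: an element of positive degree $\ge \ell_E$ strictly lowers the filtration degree on $F$, so if $x.\omega$ were nonzero it would have smaller height than $\omega$, contradicting how $\omega$ was chosen.

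Concretely, I would first record that $\omega \in F_{\le m}$ by the definition $m = \mathrm{ht}(\omega)$, and that $x \in \mathfrak{g}_{\ell_E} \subseteq \mathfrak{g}_{\ge \ell_E}$. Since $\ell_E \ge \ell_E - 1$, the hypothesis of Lemma~\ref{NOT-submodule} is satisfied with $k = \ell_E$ and $r = m$, and the lemma yields
\[
\mathfrak{g}_{\ge \ell_E}.F_{\le m} \;\subseteq\; F_{\le m - \ell_E + \ell_E - 1} \;=\; F_{\le m-1}.
\]
In particular $x.\omega \in F_{\le m-1}$, so either $x.\omega = 0$ or $\mathrm{ht}(x.\omega) \le m - 1$.

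Now I would invoke the fact that $W$ is a $\mathfrak{g}$-submodule of $F$, so $x.\omega \in W$. If $x.\omega$ were nonzero, it would be a nonzero element of $W$ of height at most $m-1$, contradicting the choice of $\omega$ as a nonzero element of $W$ with the minimal possible height $m$. Therefore $x.\omega = 0$, as required.

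I do not anticipate any real obstacle here: the argument is essentially a one-line application of Lemma~\ref{NOT-submodule}, and the verification that its hypotheses apply ($k = \ell_E \ge \ell_E - 1$, $\omega \in F_{\le m}$) is immediate. The only thing to be careful about is that the filtration-lowering estimate indeed gives $m-1$ and not just $m$ when one plugs in $k = \ell_E$, which is exactly what happens.
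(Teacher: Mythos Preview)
Your proof is correct and follows essentially the same approach as the paper: apply Lemma~\ref{NOT-submodule} with $k=\ell_E$ to get $x.\omega\in F_{\le m-1}\cap W$, then invoke the minimality of $m=\mathrm{ht}(\omega)$. The paper additionally records the explicit leading term of $x.\omega$ (namely $\sum_{|\alpha|=m}\sum_i \alpha_i\partial^{\alpha-\epsilon_i}[x,\partial_i]v_\alpha$ modulo $F_{\le m-2}$), but that formula is only needed for the next lemma and is not required for the present statement.
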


\begin{proof}
From Lemma \ref{NOT-submodule}, it is straightforward to check that
\[x.\omega=\sum_{\substack{\alpha\in\mathbb{Z}_+^n\\|\alpha|=m}}\sum_{i=1}^{n}
\alpha_i\partial^{\alpha-\epsilon_i}[x,\partial_i]v_{\alpha}+\omega'
\in W\]
	for some $\omega'\in F_{\leq m-2}$. We see that $\text{ht}(x\omega)=\text{ht}(\omega)-1$ if $x\omega\ne0$.
	Then the result follows from the minimality of $\text{ht}(\omega)$.
\end{proof}
\begin{lemma}\label{A-useful-equation}
For any $x\in\mathfrak{g}_{\ell_E}$ and $\eta\in\mathbb{Z}_+^n$ with $|\eta|=m-1$, we have
\begin{equation*}
\sum_{i=1}^n(\eta_i+1)[x,\partial_i]v_{\eta+\epsilon_i}=0.
\end{equation*}
\end{lemma}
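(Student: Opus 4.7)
The plan is to derive the claim by extracting the top-degree component of the identity $x.\omega=0$ furnished by Lemma \ref{minimality-height-vector}, and then reading off the coefficient of each basis element $\partial^{\eta}$ in the direct sum decomposition of $F_{m-1}$.

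First I would revisit the computation already carried out in the proof of Lemma \ref{minimality-height-vector}. Moving $x \in \fg_{\ell_E}$ past a monomial $\partial^{\alpha}$ with $|\alpha|=m$ yields
\[
x \partial^{\alpha} = \partial^{\alpha} x + \sum_{i=1}^{n} \alpha_i \partial^{\alpha - \epsilon_i}[x,\partial_i] + R_{\alpha},
\]
where $R_{\alpha}$ has $\partial$-degree at most $m-2$; the first term contributes to $\omega'\in F_{\le m-2}$ because the term $\partial^\alpha x v_\alpha$ lies in $F_m$ but after invoking Lemma \ref{NOT-submodule} together with the minimality argument used in Lemma \ref{minimality-height-vector}, the net height of $x.\omega$ is at most $m-1$. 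Consequently, the component of $x.\omega$ of height exactly $m-1$ is precisely
\[
T := \sum_{|\alpha|=m}\sum_{i=1}^{n} \alpha_i\, \partial^{\alpha - \epsilon_i}\, [x,\partial_i]\, v_{\alpha},
\]
and $x.\omega = T + \omega'$ with $\omega'\in F_{\le m-2}$. Observe that $[x,\partial_i] \in \fg_{\ell_E - 1} \subseteq \fg_{\ge 0}$ (since $\ell_E \ge 1$), so $[x,\partial_i]v_{\alpha} \in E$; thus $T$ is a genuine element of $F_{m-1}$.

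Next, Lemma \ref{minimality-height-vector} gives $x.\omega = 0$. Because $F = \bigoplus_k F_k$ is a grading, the height-$(m-1)$ component of $x.\omega$ must vanish, so $T=0$ in $F_{m-1}$. I then reindex by putting $\eta = \alpha - \epsilon_i$, which, for each fixed $i$, is a bijection between $\{\alpha : |\alpha|=m,\ \alpha_i\ge 1\}$ and $\{\eta: |\eta| = m-1\}$, with $\alpha_i = \eta_i+1$. Rewriting $T$ this way gives
\[
T = \sum_{|\eta|=m-1} \partial^{\eta}\!\left( \sum_{i=1}^{n}(\eta_i+1)\,[x,\partial_i]\,v_{\eta+\epsilon_i}\right).
\]
Using the direct sum decomposition $F_{m-1} = \bigoplus_{|\eta|=m-1} \partial^{\eta}\otimes E$, the vanishing of $T$ forces the $E$-valued coefficient of each $\partial^{\eta}$ to be zero, which is exactly the stated identity.

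The only subtlety --- which is genuinely not an obstacle --- is to confirm that every $[x,\partial_i] v_{\eta+\epsilon_i}$ lies in $E$ so that comparison in the direct sum $F_{m-1} = \bigoplus_{\eta}\partial^{\eta}\otimes E$ is legitimate; this is immediate from $\ell_E\ge 1$ and $x\in \fg_{\ell_E}$. The main work of the lemma has therefore been done by Lemma \ref{minimality-height-vector}; what remains is precisely this bookkeeping with the grading.
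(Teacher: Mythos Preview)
Your proof is correct and follows exactly the same route as the paper: invoke Lemma~\ref{minimality-height-vector} to get $x.\omega=0$, isolate the degree-$(m-1)$ part $T=\sum_{|\alpha|=m}\sum_i \alpha_i\,\partial^{\alpha-\epsilon_i}[x,\partial_i]v_\alpha$, reindex by $\eta=\alpha-\epsilon_i$, and read off coefficients in the direct sum $F_{m-1}=\bigoplus_{|\eta|=m-1}\partial^\eta\otimes E$. One cosmetic point: the sentence about $\partial^\alpha x v_\alpha$ lying in $F_m$ is unnecessary, since in fact $xv_\alpha=0$ (because $x\in\fg_{\ell_E}$ and $E$ is homogeneous of height $\ell_E$), so that term simply vanishes.
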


\begin{proof}
	From Lemma \ref{minimality-height-vector} and its proof, we know  that
	\[ \sum_{\substack{\alpha\in\mathbb{Z}_+^n\\|\alpha|=m}}\sum_{i=1}^{n}
	\alpha_i\partial^{\alpha-\epsilon_i}[x,\partial_i]v_{\alpha}=0.\]
Rewrite the above equation we see that 
$$
\sum_{\eta:|\eta|=m-1}\sum_{i=1}^n(\eta_i+1)\partial^\eta[x,\partial_i]v_{\eta+\epsilon_i}=0.
$$ The formula in the lemma follows.
\end{proof}

In the remaining of this subsection, we assume that $\ell_E\geq2$.
\begin{lemma}\label{minimality-lht-vector}
For $y\in\mathfrak{g}_{\ell_E-1}$, if $y.v_{\alpha}=0$ for some $v_{\alpha}\neq0$
with $\alpha\in\Z_{+}^{n}$ and $|\alpha|=m$, then $y.v_{\gamma}=0$ for all $\gamma\in\mathbb{Z}_+^n$ with $|\gamma|=m$.
\end{lemma}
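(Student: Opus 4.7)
The strategy is to apply $y$ to the vector $\omega$ from \eqref{omega} and invoke the minimality of $\mathrm{lth}(\omega)$. Since $\ell_E\geq 2$, we have $y\in\fg_{\ell_E-1}\subseteq\fg_{\geq 0}$, so $y.v_\alpha\in E$ is defined for every $\alpha$ in the support of $\omega$, and $y.\omega\in W$ because $W$ is a $\fg$-submodule of $F$.

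First I would compute $\mathrm{top}(y.\omega)$. Writing $y\partial^\alpha=\partial^\alpha y+[y,\partial^\alpha]$ in $\U(\fg)$, the summand $\partial^\alpha(y.v_\alpha)$ sits in $F_m$, while each iterated bracket $(\mathrm{ad}\,\partial_{i_1})\cdots(\mathrm{ad}\,\partial_{i_k})(y)$ drops the $\fg$-grading of $y$ by $k$ and vanishes for $k>\ell_E$; thus in PBW form $[y,\partial^\alpha]$ is a sum of monomials $\partial^\beta z$ with $|\beta|\leq|\alpha|-1$ and $z\in\fg_{\geq-1}$. A quick degree count shows that for each such monomial, $\partial^\beta z.v_\alpha\in F_{\leq m-1}$. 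Consequently
\begin{equation*}
y.\omega \;=\; \sum_{|\alpha|=m}\partial^\alpha(y.v_\alpha)\;+\;\omega'',\qquad \omega''\in F_{\leq m-1}.
\end{equation*}
By the PBW identification $F\cong\U(\fg_{-1})\otimes E$, the monomials $\{\partial^\alpha\}$ form a free basis of $F$ over $E$, so $\mathrm{top}(y.\omega)=\sum_{|\beta|=m}\partial^\beta(y.v_\beta)$ vanishes exactly when each $y.v_\beta$ with $|\beta|=m$ does.

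Now suppose for contradiction that some $\gamma$ with $|\gamma|=m$ satisfies $y.v_\gamma\neq 0$. Then $\mathrm{top}(y.\omega)\neq 0$, so $\mathrm{ht}(y.\omega)=m$; moreover, the support of $\mathrm{top}(y.\omega)$, namely $\{|\beta|=m:y.v_\beta\neq 0\}$, is contained in the support of $\mathrm{top}(\omega)$ but excludes $\alpha$, since $y.v_\alpha=0$ while $v_\alpha\neq 0$. Therefore $\mathrm{lth}(y.\omega)<\mathrm{lth}(\omega)$, while $y.\omega$ is a nonzero element of $W$ of height $m$; this contradicts the minimality of $\mathrm{lth}(\omega)$. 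Hence $y.v_\gamma=0$ for all $\gamma$ with $|\gamma|=m$, as claimed. The only delicate step is the first one, verifying that all commutator contributions lie in $F_{\leq m-1}$, and it is here that the hypothesis $\ell_E\geq 2$ enters essentially, since it forces $[y,\partial_i]\in\fg_{\geq 0}$ to preserve $E$ while keeping iterated brackets inside $\fg_{\geq -1}$.
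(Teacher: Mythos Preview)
Your proof is correct and follows the same approach as the paper's: compute $y.\omega$, identify its top piece as $\sum_{|\alpha|=m}\partial^\alpha(y.v_\alpha)$, and invoke the minimality of $\mathrm{lth}(\omega)$. The paper does this in one line by citing Lemma~\ref{NOT-submodule} (together with the fact that $F_{\leq r}$ is $\fg_{\geq 0}$-stable) to place the commutator terms in $F_{\leq m-1}$, whereas you spell out the PBW/iterated-bracket computation directly; the content is the same.
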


\begin{proof}
Note that $y\in\mathfrak{g}_{\ell_E-1}$ and $\ell_{E}\geq2$. It follows from Lemma \ref{NOT-submodule} that
\[y.\omega=\sum_{\alpha\in\mathbb{Z}_+^n}\partial^{\alpha}(y.v_{\alpha})+\omega'\in W\]
for some $\omega'\in F_{\leq m-1}$. Then the result follows from the minimality of $\text{lth}(\omega)$
\end{proof}

\begin{corollary}\label{A-useful-corollary}
If $v_{\eta+\epsilon_{i_0}}\neq0$ for some $1\leq i_{0}\leq n$ and $\eta\in\mathbb{Z}_+^n$ and $|\eta|=m-1$, then $(t_{i_0}^{\ell_E}\partial_j).v_{\alpha}=0$ for all $1\leq j\leq n$ and all $\alpha\in\mathbb{Z}_+^n$ with $|\alpha|=m$.
\end{corollary}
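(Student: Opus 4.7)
The plan is to combine Lemma~\ref{A-useful-equation} (applied to a carefully chosen $x\in\mathfrak{g}_{\ell_E}$) with Lemma~\ref{minimality-lht-vector} to propagate the vanishing from one specific $v_{\eta+\epsilon_{i_0}}$ to all $v_\gamma$ with $|\gamma|=m$.

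First I would specialize Lemma~\ref{A-useful-equation} by choosing, for the given $j$, the element
\[
x = t_{i_0}^{\ell_E+1}\partial_j \in \mathfrak{g}_{\ell_E}.
\]
Using the bracket formula \eqref{lie-bracket}, a direct computation gives
\[
[x,\partial_i] = -(\ell_E+1)\,\delta_{i,i_0}\,t_{i_0}^{\ell_E}\partial_j.
\]
Substituting into the identity of Lemma~\ref{A-useful-equation}, all terms on the sum over $i$ vanish except $i=i_0$, leaving
\[
-(\ell_E+1)(\eta_{i_0}+1)\bigl(t_{i_0}^{\ell_E}\partial_j\bigr).v_{\eta+\epsilon_{i_0}} = 0,
\]
which, since $(\ell_E+1)(\eta_{i_0}+1)\neq 0$, yields $(t_{i_0}^{\ell_E}\partial_j).v_{\eta+\epsilon_{i_0}} = 0$.

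Next I would invoke Lemma~\ref{minimality-lht-vector} to upgrade this single-vector statement to all multi-indices of size $m$. Note $y := t_{i_0}^{\ell_E}\partial_j$ lies in $\mathfrak{g}_{\ell_E-1}$ (its polynomial part has total degree $\ell_E$), and the hypothesis of the corollary guarantees $v_{\eta+\epsilon_{i_0}}\neq 0$. So Lemma~\ref{minimality-lht-vector}, applied to this $y$ and this nonzero $v_{\eta+\epsilon_{i_0}}$, immediately gives $y.v_\gamma = 0$ for all $\gamma\in\mathbb{Z}_+^n$ with $|\gamma|=m$. Since $j$ was arbitrary, the corollary follows.

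I do not expect a serious obstacle here; the content is essentially bookkeeping and the choice of $x$. The only subtle point is ensuring that Lemma~\ref{minimality-lht-vector} is applicable — this requires $\ell_E\ge 2$ so that $y\in\mathfrak{g}_{\ell_E-1}\subseteq\mathfrak{g}_{\ge 1}\subseteq \mathfrak{g}_{\ge 0}$, which is the standing assumption of the subsection, and relies on $v_{\eta+\epsilon_{i_0}}$ being nonzero, which is precisely the given hypothesis.
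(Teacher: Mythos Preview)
Your proof is correct and follows exactly the same approach as the paper's: the paper's one-line proof simply says ``take $x=t_{i_0}^{\ell_E+1}\partial_j$ in Lemma~\ref{A-useful-equation} and use Lemma~\ref{minimality-lht-vector},'' and you have spelled out precisely those details. Your remark about needing $\ell_E\ge2$ for Lemma~\ref{minimality-lht-vector} to apply is also on point, matching the paper's standing assumption for this subsection.
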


\begin{proof}
This follows by taking $x=t_{i_0}^{\ell_E+1}\partial_j$ in Lemma \ref{A-useful-equation}
and using Lemma \ref{minimality-lht-vector}.
\end{proof}

\begin{proposition}\label{Important-Prop}
Assume that $\ell_E\geq2$. Then $W\cap E\neq\{0\}$.
\end{proposition}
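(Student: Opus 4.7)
My plan is to argue by contradiction: assume $W \cap E = \{0\}$, take $\omega$ as in \eqref{omega} with minimal $m = \mathrm{ht}(\omega)$ and then minimal $\mathrm{lth}(\omega)$, so that $m \geq 1$. The target is to show that every $v_\alpha$ with $|\alpha| = m$ is killed by all of $\fg_{\ell_E - 1}$. Since $v_\alpha \in E = E^{(\ell_E)}$ already supplies $\fg_{\geq \ell_E} v_\alpha = 0$, this will place $v_\alpha$ in $E^{(\ell_E - 1)}$, which is zero by the minimality in the definition of $\ell_E$; hence $v_\alpha = 0$ for every $\alpha$, contradicting $\mathrm{ht}(\omega) = m$.

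Let $I := \{i : \alpha_i \geq 1 \text{ for some } v_\alpha \neq 0\}$, which is non-empty, and fix once and for all some $l_0 \in I$. Set $S := \{y \in \fg_{\ell_E - 1} : y v_\alpha = 0 \text{ for all } |\alpha| = m\}$. I would prove $t^\gamma \partial_j \in S$ for every $\gamma \in \Z_+^n$ with $|\gamma| = \ell_E$ and every $j$ by induction on $\ell_E - \gamma_{l_0} \in \{0, 1, \dots, \ell_E\}$. The base case $\gamma = \ell_E \epsilon_{l_0}$ is Corollary \ref{A-useful-corollary} applied with $i_0 = l_0$ (valid since $l_0 \in I$). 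For the inductive step with $\gamma_{l_0} < \ell_E$, apply Lemma \ref{A-useful-equation} with $x = t^{\gamma + \epsilon_{l_0}}\partial_j$; the resulting relation reads $(\eta_{l_0} + 1)(\gamma_{l_0} + 1)\, t^\gamma \partial_j v_{\eta + \epsilon_{l_0}} + \sum_{l \neq l_0,\, \gamma_l \geq 1} (\eta_l + 1)\gamma_l\, t^{\gamma + \epsilon_{l_0} - \epsilon_l}\partial_j v_{\eta + \epsilon_l} = 0$. For each $l \neq l_0$ with $\gamma_l \geq 1$, the shift $\gamma + \epsilon_{l_0} - \epsilon_l$ has $l_0$-coordinate $\gamma_{l_0} + 1 > \gamma_{l_0}$, so by the inductive hypothesis $t^{\gamma + \epsilon_{l_0} - \epsilon_l}\partial_j \in S$ and that term vanishes. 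What remains gives $t^\gamma \partial_j v_{\eta + \epsilon_{l_0}} = 0$ for every $\eta$ with $|\eta| = m - 1$; choosing $\eta$ so that $v_{\eta + \epsilon_{l_0}} \neq 0$ (possible since $l_0 \in I$), Lemma \ref{minimality-lht-vector} then promotes this to $t^\gamma \partial_j \in S$, completing the induction.

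The only nontrivial design choice --- and the main thing to get right --- is the pairing of the auxiliary index $l_0$ with the inductive ordering $\ell_E - \gamma_{l_0}$. Picking $l_0 \in I$ simultaneously ensures (i) that the base case $\gamma = \ell_E \epsilon_{l_0}$ is covered by Corollary \ref{A-useful-corollary}, (ii) that each shift multi-index $\gamma + \epsilon_{l_0} - \epsilon_l$ is strictly ``earlier'' in the induction, and (iii) that the concluding invocation of Lemma \ref{minimality-lht-vector} is legitimate because some $v_{\eta + \epsilon_{l_0}}$ is guaranteed nonzero. Without this single coordinated choice one is forced to handle coordinates in $\{1, \dots, n\} \setminus I$ by a separate bootstrap; the above unification is what makes the argument clean.
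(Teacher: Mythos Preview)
Your argument is correct and follows essentially the same route as the paper's proof: the paper fixes the coordinate $1$ (assuming without loss of generality that some top-degree $\beta$ has $\beta_1\neq 0$) and runs reverse induction on $\gamma_1$, which is exactly your induction on $\ell_E-\gamma_{l_0}$ with $l_0=1$. One small imprecision: your set $I$ should be restricted to indices appearing in \emph{top-degree} terms, i.e.\ $I=\{i:\alpha_i\geq 1\text{ for some }\alpha\text{ with }|\alpha|=m,\ v_\alpha\neq 0\}$; otherwise the step ``choosing $\eta$ so that $v_{\eta+\epsilon_{l_0}}\neq 0$'' (which requires $|\eta|=m-1$) may fail.
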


\begin{proof}
Suppose that $W\cap E=\{0\}$. Take the element $\omega\in W$ in (\ref{omega}). We may assume that there exists
$\beta=(\beta_1,\beta_2,\ldots,\beta_n)\in\mathbb{Z}_+^n$ with $v_{\beta}\neq0$ and $|\beta|=m$, $\beta_1\neq0$.
Then from Corollary \ref{A-useful-corollary}, we have
\begin{equation}\label{gamma}(t_1^{\ell_E}\partial_j).v_{\alpha}=0\quad \text{for all}\ 1\leq j\leq n\ \text{and}\
\alpha\in\mathbb{Z}_+^n\ \text{with}\ |\alpha|=m.\end{equation}

\noindent{\bf Claim}.
$(t^{\gamma}\partial_j).v_{\alpha}=0$ for all $1\leq j\leq n$ and $\gamma,\alpha\in\mathbb{Z}_+^n$
with $|\gamma|=\ell_E$, $|\alpha|=m$.

We prove this claim by reverse induction on the number $\gamma_1$.
	
If $\gamma_1=\ell_E$, we have done from   (\ref{gamma}). Suppose that there exists some $0\leq k <\ell_E$
such that for all $\gamma\in\mathbb{Z}_+^n$ with $\gamma_1=k + 1$, $|\gamma|=\ell_E$, we have
\[(t^{\gamma}\partial_j).v_{\alpha}=0\quad\text{for all}\ 1\leq j\leq n\ \text{and}\
\alpha\in\mathbb{Z}_+^n\ \text{with}\ |\alpha|=m.\]
	
Then for any $\gamma\in\mathbb{Z}_+^n$ with $\gamma_1=k$ and $|\gamma|=\ell_E$,
by Lemma \ref{A-useful-equation} and induction hypothesis, we have
\[\sum_{i=1}^{n}(\eta_i + 1)[t^{\gamma+\epsilon_1}\partial_j,\partial_i].v_{\eta+\epsilon_i}=0\]
for all $j=1,2,\dots,n$ and $\eta\in\mathbb{Z}_+^n$ with $|\eta|=m-1$. This gives that
\[(\eta_1+1)(\gamma_1+1)(t^{\gamma}\partial_j).v_{\eta+\epsilon_1}=0\]
for all $j=1,2,\dots,n$ and $\eta\in\mathbb{Z}_+^n$ with $|\eta|=m-1$.
Taking $\eta=\beta-\epsilon_1$, we have
\[\beta_1(\gamma_1+1)(t^{\gamma}\partial_j).v_{\beta}=0,\quad j=1,2,\dots,n.\]
From Lemma \ref{minimality-lht-vector}, we have
\[(t^{\gamma}\partial_j).v_{\alpha}=0\]
for all $j=1,2,\dots,n$ and $\alpha\in\mathbb{Z}_+^n$ with $|\alpha|=m$. Then the {\bf Claim} follows. This means that
\[\mathfrak{g}_{\ell_E-1}.v_{\alpha}=0\]
for all $\alpha\in\mathrm{Supp}(\omega)$ with $|\alpha|=m$, which contradicts the definition of $\ell_E$.
This complete the proof.
\end{proof}

\begin{theorem}\label{g_0-induce-module-height-geq2}
If $E$ is a simple smooth $\fg_{\geq0}$-module with height $\ell_{E}\geq2$, then the induced $\fg$-module $F=\mathrm{Ind}_{\fg_{\geq0}}^{\fg}E$ is simple.
\end{theorem}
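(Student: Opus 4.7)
The plan is to deduce Theorem \ref{g_0-induce-module-height-geq2} as a short formal consequence of Proposition \ref{Important-Prop}, the PBW theorem, and the hypothesis that $E$ is simple as a $\fg_{\geq0}$-module. All the technical content has already been absorbed into Proposition \ref{Important-Prop}; the remaining steps are soft.

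First, take an arbitrary nonzero $\fg$-submodule $W\subseteq F$. Since $\ell_E\geq2$, Proposition \ref{Important-Prop} applies and gives $W\cap E\neq\{0\}$. Next, observe that $W\cap E$ is a $\fg_{\geq0}$-submodule of $E$: by the definition $F=\U(\fg)\otimes_{\U(\fg_{\geq0})}E$, the subspace $E=1\otimes E\subseteq F$ is $\fg_{\geq0}$-stable and carries the original $\fg_{\geq0}$-action; on the other hand $W$ is $\fg$-stable and a fortiori $\fg_{\geq0}$-stable, so the intersection is $\fg_{\geq0}$-stable. Since $E$ is simple as a $\fg_{\geq0}$-module, the nonzero submodule $W\cap E$ must equal $E$, i.e.\ $E\subseteq W$.

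To conclude, invoke the PBW decomposition $F\cong\U(\fg_{-1})\otimes_{\C} E$ coming from the induction (note that $\fg_{\leq-1}=\fg_{-1}$ in this grading). Because $W$ contains $E$ and is $\fg$-stable, it contains $\U(\fg_{-1})\cdot E$, which already spans $F$. Hence $W=F$, showing $F$ has no nontrivial proper submodules.

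There is essentially no new obstacle: the genuine difficulty lay in Proposition \ref{Important-Prop}, where the minimality of $\mathrm{ht}(\omega)$ and $\mathrm{lth}(\omega)$, combined with Lemmas \ref{minimality-height-vector}--\ref{minimality-lht-vector} and Corollary \ref{A-useful-corollary}, force the contradictory conclusion $\fg_{\ell_E-1}.v_\alpha=0$. One subtlety worth flagging in writing is that the hypothesis $\ell_E\geq2$ is used \emph{only} via Proposition \ref{Important-Prop}; in particular, the case $\ell_E=1$ is genuinely different and must be treated separately (as is done in Section 5 of the paper via tensor modules $F(P_0,M)$).
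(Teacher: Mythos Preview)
Your proof is correct and follows essentially the same approach as the paper: apply Proposition \ref{Important-Prop} to any nonzero submodule $W$, use simplicity of $E$ to get $E\subseteq W$, and then PBW to conclude $W=F$. The only minor addition in the paper is the explicit remark that a simple smooth $\fg_{\geq0}$-module is automatically homogeneous, which is needed since Proposition \ref{Important-Prop} is stated under that standing hypothesis; you may want to insert this one-line observation before invoking the proposition.
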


\begin{proof}
It is easy to see that a simple smooth $\fg_{\geq0}$-module $E$ is homogenous.
Let $W$ be a nonzero $\fg$-submodule of $F$. It follows from Proposition \ref{Important-Prop} that $W\cap E\neq\{0\}$.
Hence $E\subseteq W$, which implies that $W=F$. Thus $F$ is a simple $\fg$-module.
\end{proof}

\subsection{Simple smooth module $V$ over $\fg$ with $\ell_{V}\ge2$}

This subsection is devoted to our first main result.

\begin{thm}\label{main-result}
Let $V$ be a simple smooth $\mathfrak{g}$-module with $\ell_V\ge2$.
Then the $\mathfrak{g}_{\geq0}$-module $V^{(\ell_V)}$ is simple and
\[V\cong\mathrm{Ind}_{\mathfrak{g}_{\geq0}}^{\mathfrak{g}}V^{(\ell_V)},\]
as $\mathfrak{g}$-modules.
\end{thm}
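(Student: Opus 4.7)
The plan is to set $E := V^{(\ell_V)}$ and simultaneously prove that (i) $V \cong \mathrm{Ind}_{\mathfrak{g}_{\geq 0}}^{\mathfrak{g}} E$ and (ii) $E$ is simple as a $\mathfrak{g}_{\geq 0}$-module. First I would verify the preliminary structural facts about $E$: it is a nonzero $\mathfrak{g}_{\geq 0}$-submodule of $V$ by Proposition \ref{G-0-module}, it is homogeneous by the very definition of $V^{(\ell_V)}$, and a short computation $(V^{(\ell_V)})^{(r)} = V^{(\ell_V)} \cap V^{(r)} = V^{(\min(r,\ell_V))}$ shows that its height as a $\mathfrak{g}_{\geq 0}$-module is exactly $\ell_V \geq 2$. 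In particular, the standing assumptions of Subsection 4.1 are met by $E$.

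Next I would construct the natural $\mathfrak{g}$-homomorphism $\phi : F \to V$, where $F := \mathrm{Ind}_{\mathfrak{g}_{\geq 0}}^{\mathfrak{g}} E$, defined on generators by $u \otimes v \mapsto u.v$ for $u \in \mathcal{U}(\mathfrak{g}_{<0})$ and $v \in E$, and show it is an isomorphism. Surjectivity is immediate, since $\phi(1 \otimes E) = E \neq 0$ and $V$ is simple. For injectivity, let $K := \ker\phi$ and suppose $K \neq 0$; then Proposition \ref{Important-Prop}, whose only hypothesis $\ell_E \geq 2$ has just been arranged, forces $K \cap (1 \otimes E) \neq 0$. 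But every $1 \otimes v \in 1 \otimes E$ satisfies $\phi(1 \otimes v) = v$, which is nonzero whenever $v$ is, contradicting $1 \otimes v \in K$. Hence $K = 0$ and $V \cong F$.

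Finally, to deduce the simplicity of $E$, suppose for contradiction that $E' \subsetneq E$ is a nonzero $\mathfrak{g}_{\geq 0}$-submodule. Then the subspace $\mathcal{U}(\mathfrak{g}_{<0}) \otimes E' \subseteq F$ is the image of the natural $\mathfrak{g}$-homomorphism $\mathrm{Ind}_{\mathfrak{g}_{\geq 0}}^{\mathfrak{g}} E' \to F$ induced by the inclusion $E' \hookrightarrow E$, and is therefore a nonzero $\mathfrak{g}$-submodule of $F$. Since $F \cong V$ is simple, this submodule equals $F$, and the PBW decomposition $F = \mathcal{U}(\mathfrak{g}_{<0}) \otimes E$ then forces $E' = E$, a contradiction. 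I expect the main obstacle to be the injectivity step, which rests entirely on Proposition \ref{Important-Prop} and thus on the hypothesis $\ell_V \geq 2$; when $\ell_V = 1$, a nonzero submodule of $F$ need not meet $E$, which is precisely why Section 5 proceeds by a completely different strategy using the tensor modules $F(P_0, M)$.
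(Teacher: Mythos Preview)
Your proposal is correct and follows essentially the same approach as the paper: both construct the canonical epimorphism $\pi:\mathrm{Ind}_{\mathfrak{g}_{\geq0}}^{\mathfrak{g}}V^{(\ell_V)}\to V$, use Proposition~\ref{Important-Prop} to show $\ker\pi\cap E=0$ and hence $\ker\pi=0$, and then deduce the simplicity of $E=V^{(\ell_V)}$ from the simplicity of the induced module. Your write-up is slightly more explicit than the paper's---you spell out the computation $\ell_E=\ell_V$ and the PBW argument for why simplicity of $F$ forces simplicity of $E$---but there is no substantive difference in strategy.
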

\begin{proof}
Note that $V^{(\ell_{V})}$ is a homogenous $\fg_{\geq0}$-module.
We take $E=V^{(\ell_{V})}$. It is clear that the height $\ell_{E}$ of the $\fg_{\geq0}$-module
$E$ is equal to $\ell_{V}$, i.e., $\ell_{E}=\ell_{V}$. We know that 
there exists a canonical $\fg$-module epimorphism
\[\pi:\text{Ind}_{\mathfrak{g}_{\geq0}}^{\mathfrak{g}}V^{(\ell_V)}\rightarrow V\]
such that $\pi(x)=x$ for all $x\in V^{(\ell_V)}$. It follows from Proposition \ref{Important-Prop}
that $\ker \pi=0$. If fact, if $\ker \pi\neq0$, then $\ker \pi\cap V^{(\ell_V)}\neq\{0\}$, which is a contradiction.
Thus we know that $\pi$ is a $\fg$-module isomorphism.
This gives that $\mathrm{Ind}_{\mathfrak{g}_{\geq0}}^{\mathfrak{g}}V^{(\ell_V)}$ is a simple $\fg$-module, which means that
$V^{(\ell_V)}$ is a simple $\mathfrak{g}_{\geq0}$-module.
\end{proof}
Thus we have shown that any simple smooth $\mathcal{W}_{n}^{+}$
module $V$ with   height $\ell_V \geq 2$ is the induced module from the simple module $V^{(l_V)}$ over the finite-dimensional  Lie algebras $\mathfrak{g}^{(\ell_V)}=\mathfrak{g}_{\geq0} / \mathfrak{g}_{\geq \ell_V}$. 
We remark that Rudakov    proved the above result   for his algebras in \cite{R} by using simplicial systems. 

The statement in Theorem \ref{main-result} is also valid for the case that $\ell_{V}=1$ if $n=1$.

\begin{lemma}\label{n=1}
	Let $n=1$, $V$ be a nontrivial simple smooth  $\mathfrak{g}$-module with $\ell_V=1$.
	Then the $\mathfrak{g}_{\geq0}$-module $V^{(\ell_V)}$ is nontrivial $1$-dimensional  and
	\[V\cong\mathrm{Ind}_{\mathfrak{g}_{\geq0}}^{\mathfrak{g}}V^{(\ell_V)},\]
	as $\mathfrak{g}$-modules.
\end{lemma}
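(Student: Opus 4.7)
The plan is to mimic the height-minimization argument of Proposition \ref{Important-Prop}, but push it one step further, exploiting the additional generator $e_2:=t^3\partial\in\fg_2$ that is available because for $n=1$ every $\fg_k=\C e_k$ (with $e_k:=t^{k+1}\partial$) is one-dimensional and $[e_i,e_j]=(j-i)e_{i+j}$. First I would form the canonical $\fg$-module epimorphism
\[\pi\colon F:=\mathrm{Ind}_{\fg_{\ge 0}}^{\fg}V^{(1)}\twoheadrightarrow V,\qquad \pi(1\otimes v)=v,\]
which is surjective since $V^{(1)}\ne 0$ generates the simple module $V$; by PBW one has $F=\bigoplus_{a\ge 0}e_{-1}^a\otimes V^{(1)}$. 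The target is to prove $\ker\pi=0$; once that is done, the remaining assertions of the lemma fall out from a short $\fg_0$-analysis.

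To establish injectivity, I would assume toward contradiction that $0\ne\omega=\sum_{a=0}^{m}e_{-1}^a\otimes v_a\in\ker\pi$ is chosen with $v_m\ne 0$ and $m$ minimal, and compute $e_1\omega$ and $e_2\omega$ in $F$. Using the PBW reorderings
\[[e_1,e_{-1}^a]=-2a\,e_{-1}^{a-1}e_0+a(a-1)e_{-1}^{a-1},\qquad [e_2,e_{-1}]=-3e_1,\]
together with $e_1v_a=e_2v_a=0$, one checks that $e_1\omega$ has height at most $m-1$ and $e_2\omega$ has height at most $m-2$, so by minimality both must vanish. Comparing components in $F=\bigoplus_b e_{-1}^b\otimes V^{(1)}$, vanishing of $e_1\omega$ forces $e_0v_a=\tfrac{a-1}{2}v_a$ for $1\le a\le m$; feeding this eigenvalue identification into a short double-commutator expansion I expect to obtain $e_2(e_{-1}^a\otimes v_a)=\tfrac{a(a-1)(a+1)}{2}\,e_{-1}^{a-2}\otimes v_a$, so $e_2\omega=0$ yields $v_a=0$ for every $a\ge 2$. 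What remains is $\omega=1\otimes v_0+e_{-1}\otimes v_1$ with $e_0v_1=0$; but then $v_1\in V^{(1)}$ together with $e_0v_1=0$ forces $v_1\in V^{(0)}=0$, after which $\pi(\omega)=v_0=0$ gives $\omega=0$, the desired contradiction.

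Once $\pi$ is an isomorphism, the rest is quick. Since induction along $\fg_{\ge 0}\subset\fg$ is exact (because $U(\fg)$ is a free right $U(\fg_{\ge 0})$-module by PBW), any proper nonzero $\fg_{\ge 0}$-submodule of $V^{(1)}$ would induce to a proper nonzero $\fg$-submodule of $F\cong V$; simplicity of $V$ therefore forces $V^{(1)}$ to be simple over $\fg_{\ge 0}$. Since $\fg_{\ge 1}$ acts as zero on $V^{(1)}$, this is the same as being simple over $\fg_0=\C e_0$, so by algebraic closedness of $\C$ we have $V^{(1)}=\C u$ with $e_0 u=\lambda u$ for some $\lambda\in\C$. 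If $\lambda=0$ then $u\in V^{(0)}=0$, contradicting $\ell_V=1$, so $\lambda\ne 0$ and $V^{(1)}$ is nontrivial. The main obstacle is showing $\ker\pi=0$: the $\ell_E\ge 2$ argument of Proposition \ref{Important-Prop} gives no useful constraint from $\fg_{\ell_E}=\fg_1$ alone when $\ell_E=1$, and the fix—specific to $n=1$—is to exploit the second operator $e_2\in\fg_2$ to squeeze out the remaining higher-degree tensor factors of $\omega$.
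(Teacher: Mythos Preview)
Your proof is correct, but it takes a different and more elaborate route than the paper's. The paper first observes directly that $e_0=t_1\partial_1$ acts injectively on $V^{(1)}$ (immediate from $\ell_V=1$, since any $v$ with $e_0v=0$ would lie in $V^{(0)}=0$). Then, given any nonzero $\omega=\sum_{i=0}^{m}\partial_1^{i}v_i\in\ker\pi$ with $v_m\ne 0$, the paper applies the single height-matched operator $e_m=t_1^{m+1}\partial_1$ and obtains $e_m\omega=(-1)^m(m+1)!\,e_0v_m\in\ker\pi\cap V^{(1)}$, which is nonzero by injectivity of $e_0$---an immediate contradiction. No minimality assumption, no eigenvalue bookkeeping, and no separate treatment of small $m$ is needed.

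Your argument instead fixes the two operators $e_1,e_2$ and uses minimality of $m$ twice: $e_1\omega=0$ pins down the $e_0$-eigenvalues of the $v_a$, and then $e_2\omega=0$ kills all $v_a$ with $a\ge 2$, leaving the cases $m\le 1$ to be handled by hand. This mirrors the structure of Proposition~\ref{Important-Prop} more closely (which was your stated intent) and is conceptually uniform with the $\ell_E\ge 2$ proof. The cost is more computation; the gain is that the same template works. The paper's trick of tailoring $e_m$ to $\omega$ is special to $n=1$ (where every $\fg_k$ is one-dimensional) and is the shorter path.
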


\begin{proof}
	Note that $V^{(1)}$ is a $\fg_{\geq0}$-module with $\fg_{\geq1}.V^{(1)}=0$. This implies that
	$V^{(1)}$ is a module over the one dimensional Lie algebra $\C t_{1}\partial_{1}$. We claim that $t_{1}\partial_{1}$ acts on $V^{(1)}$ injectively. Otherwise, there exists some nonzero vector $v\in V^{(1)}$ such that $(t_{1}\partial_{1}).v=0$, which contradicts the facts that $\ell_{V}=1$. 
	
	We know that  there exists a $\fg$-module epimorphism
	\[\sigma:\mathrm{Ind}_{\fg_{\geq0}}^{\fg}V^{(1)}\rightarrow V\] such that $\sigma(u)=u$ for any $u\in V^{(1)}$. We aim to prove that $\sigma$ is an isomorphism. Suppose to the contrary that $\sigma$ is not an isomorphism.  Then we know that $\ker\sigma$ is a nonzero proper submodule of $\mathrm{Ind}_{\fg_{\geq0}}^{\fg}V^{(1)}$ with $\ker\sigma\cap V^{(1)}=\{0\}$. Now for any nonzero vector $w$ in $\ker\sigma$, we can  write 
	\[w=\sum_{i=0}^{n}a_{i}\partial_{1}^{i}v_{i},\]
	where $n\in\N$, $a_{0},a_{1},\dots,a_{n-1}\in\C$, $a_{n}\in\C^{\times}$, $v_{0},v_{1},\dots,v_{n}\in V^{(1)}$ with $v_{n}\ne0$. Since $\fg_{\geq1}.V^{(1)}=0$ and the fact that $t_{1}\partial_{1}$ acts on $V^{(1)}$ injectively, we deduce  that 
	\[(t_{1}^{n+1}\partial_{1}).w=(-1)^{n}a_{n}(n+1)!(t_{1}\partial_{1}).v_{n}\in\ker\sigma.\]
	Note that $(t_{1}\partial_{1}).v_{n}$ is a nonzero vector in $V^{(1)}$, which is a  contradiction. So,  $\sigma$ is an isomorphism, yielding  that $V^{(1)}$ is a simple $\fg_{\geq0}$-module. Thus, $V^{(1)}$ is a simple module over the Lie algebra $\C t_{1}\partial_{1}$. We must have that $\dim V^{(1)}=1$. This means that $t_{1}\partial_{1}$ acts on $V^{(1)}$ as a nonzero scalar. This completes the proof. 
\end{proof}

Thus, in the next section, we assume that $n\geq2$.

\section{The case that $\ell_{V}=1$}
In this section, we study simple smooth $\fg$-modules $V$  with  $\ell_{V}=1$. First, we give some notations and recall some known
results from the reference  \cite{LLZ}.

Let $n\geq 2$ be an integer and let $E_{ij}$ be the $n\times n$ square matrix with $1$
as its $(i,j)$-entry and $0$ as other entries. We have the general linear Lie algebra
\[\gl_{n}=\sum_{1\leq i,j\leq n}\C E_{ij}\]and the special linear Lie algebra $\sl_{n}$.
It is well known that there exists a Lie algebra isomorphism $\psi:\fg_{0}\rightarrow \gl_{n}$ given by
\[\psi(t_{i}\partial_{j})=E_{ij},\quad 1\leq i,j\leq n.\]

The Weyl algebra $\mathcal{K}_n^+$ is the simple associative algebra
$\C[t_1,\cdots,t_n,\partial_{1},\cdots,\partial_{n}]$.
Let $P$ be a module over the associative algebra $\mathcal{K}_n^+$
and $M$ be a $\gl_n$-module. Then the tensor product
$$F(P, M)=P\otimes_{\C} M$$
becomes a $\fg$-module (see  \cite{LLZ} for details) with the action
\begin{equation}\label{Action1}(t^{\al}\partial_{j})\circ (g\otimes v)=((t^{\alpha}\partial_{j})g)\otimes v+ \sum_{i=1}^n(\ptl_{i}(t^{\alpha})g)\otimes E_{ij}(v)\end{equation}
for all $\alpha\in \Z_{+}^n$, $1\leq j\leq n$, $g\in P$, and $v\in M$.

Let $\succ$ be the lexicographical total order on $\Z_{+}^{n}$. Namely,
for \[\bm{a}=(a_1,a_2,\dots,a_{n}),\ \bm{b}=(b_1,b_2,\dots,b_{n})\in\Z_{+}^{n},\]
$\bm{a}\succ\bm{b}$ if and only if there exists $j\in\N$ such that
\begin{eqnarray}\label{total-order}a_{i}=b_{i},\ 1\leq i<j\leq n\ \text{and}\ a_j>b_j.\end{eqnarray}

\begin{lemma}
	The $\mathcal{K}_n^+$-module
	$P_0=\mathcal{K}_n^+/(\sum\limits_{i=1}^{n}\mathcal{K}_n^+t_i)$ is simple, and
	$P_0$ can be naturally identified with $\C[\partial_1,\cdots,\partial_n]$.
\end{lemma}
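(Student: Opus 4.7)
My plan is to first establish the vector space identification $P_0 \cong \C[\partial_1,\dots,\partial_n]$ using a PBW-type normal form on the Weyl algebra, then to extract the simplicity from a straightforward ``lowering by $t^\beta$'' argument.

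For the identification, I would exploit the fact that $\mathcal{K}_n^+$ has a $\C$-basis of ordered monomials $\partial^{\beta} t^{\alpha}$ (partials to the left of the $t$'s), which follows from the relations $[\partial_i,t_j]=\delta_{ij}$ by a standard induction on total degree. The left ideal $I=\sum_{i=1}^n \mathcal{K}_n^+ t_i$ contains every such monomial with $|\alpha|\geq 1$, and conversely every element of $I$, when rewritten in this normal form, has no term with $\alpha=0$. Hence $\{\overline{\partial^{\beta}} : \beta\in\Z_+^n\}$ is a $\C$-basis of $P_0$, and the linear map $\partial^{\beta}\mapsto \overline{\partial^{\beta}}$ gives the desired identification with $\C[\partial_1,\dots,\partial_n]$. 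A direct computation using $t_j\partial^{\beta}=\partial^{\beta}t_j-\beta_j\partial^{\beta-\epsilon_j}$ shows that, under this identification, $\partial_i$ acts by multiplication by $\partial_i$ and $t_i$ acts by the formal derivation $-\frac{\partial}{\partial(\partial_i)}$, so the identification is canonical as $\mathcal{K}_n^+$-modules.

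For simplicity, let $N$ be a nonzero $\mathcal{K}_n^+$-submodule of $P_0$, pick $0\neq v\in N$, and expand $v=\sum_{\beta}c_{\beta}\overline{\partial^{\beta}}$. Choose $\beta^{*}$ in the support of $v$ with $|\beta^{*}|$ maximal, and act by $t^{\beta^{*}}=t_1^{\beta_1^{*}}\cdots t_n^{\beta_n^{*}}$. Using the action $t_i\cdot\overline{\partial^{\beta}}=-\beta_i\overline{\partial^{\beta-\epsilon_i}}$, every summand $\overline{\partial^{\gamma}}$ with $|\gamma|<|\beta^{*}|$ is killed, and a summand with $|\gamma|=|\beta^{*}|$ survives only if $\gamma\geq\beta^{*}$ componentwise, which together with the equal total degree forces $\gamma=\beta^{*}$. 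Thus $t^{\beta^{*}}\cdot v=(-1)^{|\beta^{*}|}\,\beta^{*}!\,c_{\beta^{*}}\,\overline{1}$, a nonzero multiple of $\overline{1}$, so $\overline{1}\in N$ and consequently $N=\mathcal{K}_n^+\cdot\overline{1}=P_0$.

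The main obstacle is essentially bookkeeping: I must be careful to use the ``partials-first'' ordering when identifying $P_0$ as a vector space (the ``$t$'s-first'' ordering obscures the structure of $I$), and I must verify that the lowering operator $t^{\beta^*}$ does not create unwanted surviving terms of the top degree, which relies on the combinatorial observation that $\gamma\geq\beta^{*}$ and $|\gamma|=|\beta^{*}|$ together imply $\gamma=\beta^{*}$. No deeper issue arises because both statements are really about the canonical Fock module structure on $\C[\partial_1,\dots,\partial_n]$.
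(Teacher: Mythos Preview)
Your proof is correct and follows essentially the same approach as the paper: compute $t_i\cdot\overline{\partial^{\alpha}}=-\alpha_i\overline{\partial^{\alpha-\epsilon_i}}$, deduce $t^{\beta}\cdot\overline{\partial^{\alpha}}=(-1)^{|\beta|}\beta!\binom{\alpha}{\beta}\overline{\partial^{\alpha-\beta}}$, and apply a suitable $t^{\beta^*}$ to any nonzero element to produce a nonzero multiple of $\overline{1}$. The only cosmetic difference is that the paper selects $\beta^*$ as the lexicographically largest index in the support, whereas you take one of maximal total degree; both choices make the unwanted terms vanish for the same reason, and your identification of $P_0$ with $\C[\partial_1,\dots,\partial_n]$ via the $\partial$-first PBW normal form is exactly what the paper does (phrased there as the map $f\partial^{\alpha}\mapsto f(0)\partial^{\alpha}$).
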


\begin{proof}
	\maketitle
	
	For $1\le i,j\le n$, one has
	\begin{equation*}
		t_i\partial_j=\partial_jt_i-\delta_{ij}1,~t_it_j=t_jt_i,~\partial_i\partial_j=\partial_j\partial_i.
	\end{equation*}
	It is well-known that
	\begin{equation*}
		t_i\partial_j^m=\partial_j^mt_i-m\delta_{ij}\partial_j^{m-1},~m\geq 0.
	\end{equation*}
	
For $x\in \mathcal{K}_n^+$, we denote by $\overline{x}$ its corresponding image under the canonical $\mathcal{K}_n^+$-module homomorphism
	$\mathcal{K}_n^+\rightarrow P_0$. Then for $1\leq i\leq n$ and $\alpha\in\mathbb{Z}_+^n$, we have
	\begin{align*}
		t_i\overline{\partial^\alpha}&=\overline{t_i\partial_1^{\alpha_1}\partial_2^{\alpha_2}\cdots\partial_n^{\alpha_n}}\\
		&=\overline{\partial_1^{\alpha_1}\cdots\partial_{i-1}^{\alpha_{i
					-1}}(t_i\partial_i^{\alpha_i})\partial_{i+1}^{\alpha_{i+1}}\cdots\partial_n^{\alpha_n}}\\
		&=\overline{\partial_1^{\alpha_1}\cdots\partial_{i-1}^{\alpha_{i-1}}(\partial_i^{\alpha_i}t_i-\alpha_i\partial_i
			^{\alpha_i-1})\partial_{i+1}^{\alpha_{i+1}}\cdots\partial_n^{\alpha_n}}\\
		&=-\alpha_i\overline{\partial^{\alpha-\epsilon_i}}.
	\end{align*}
	It follows that
	\begin{equation*}
		t^{\beta}\overline{\partial^{\alpha}}=(-1)^{|\beta|}\beta!
\left(\alpha\atop\beta\right)\overline{\partial^{\alpha-\beta}},~\forall \alpha,\beta\in\mathbb{Z}_+^n.
	\end{equation*}
	In particular,
	$t^{\beta}\overline{\partial^{\alpha}}=0$ if $\beta\succ\alpha$. 
	Here, we have used the notations
	\begin{equation*}
		\alpha!=\prod_{i=1}^n\alpha_i!,~\left(\alpha\atop\beta\right)=\prod_{i=1}^n\left(\alpha_i\atop\beta_i\right).
	\end{equation*}
	
	Now we can show that $P_0$ is a simple  $\mathcal{K}_n^+$-module. We know that any element in $\mathcal{K}_n^+$ is a linear combination of elements of the form $\partial^\alpha t^\beta$ for some $\alpha, \beta\in {\mathbb Z}_+^n$.
Let $V$ be a nonzero submodule of $P_0$ and $0\neq v\in V$. Then $v$ can be written as
	\begin{equation*}
	v=\sum_{\alpha\in\mathbb{Z}_+^n}a_{\alpha}\overline{\partial^{\alpha}}
	\end{equation*}
	with finitely many nonzero $a_{\alpha}$'s. Let $\beta$ be largest
with respect to the lexicographical order
such that $a_{\beta}\neq 0$. Then
	\begin{equation*}
		t^{\beta}v=(-1)^{|\beta|}\beta!a_{\beta}\overline{1}\in V.
	\end{equation*}
	It follows that $\overline{1}\in V$. We can conclude that $V=P_0$.
Moreover, there exists a naturally surjective linear map
	$$\phi: \mathcal{K}_n^+ \rightarrow \mathbb{C}[\partial_{1},\partial_{2},
\ldots,\partial_{n}],\quad f\partial^{\alpha} \mapsto f(0)\partial^{\alpha}.$$
	It is easy to see that the kernel of $\phi$ is $\sum\limits_{i=1}^{n}\mathcal{K}_n^+t_i$.
Then $P_0$ can be naturally considered as $\C[\partial_1,\cdots,\partial_n]$.
\end{proof}
In what follows, we give some more symbols.
Let $\mathfrak{h}=\text{span}\{h_i\mid1\leq i\leq n-1\}\subset \gl_{n}$, where $h_i=E_{ii}-E_{i+1,i+1}$.
For any $\varphi\in\mathfrak{h}^*$, let $V(\varphi)$ be the
simple highest weight $\mathfrak{sl}_n$-module with highest weight $\varphi$.
We make $V(\varphi)$ into a $\mathfrak{gl}_n$-module by defining the action of
the identity matrix $I$ as some scalar $b \in \mathbb{C}$.
Denote by $V(\varphi, b)$ the resulting $\mathfrak{gl}_n$-module.
Define the fundamental weights $\delta_{i}\in\mathfrak{h}^*$ by $\delta_{i}(h_j)=\delta_{i,j}$
for all $i,j=1,2,\dots,n-1$. For convenience, we define $\delta_{0}=\delta_{n}=0\in\mathfrak{h}^*$.

The exterior product $\bigwedge^k(\mathbb{C}^n)=\mathbb{C}^n\wedge\cdots\wedge\mathbb{C}^n$ ($k$ times)
is a $\mathfrak{gl}_n$-module with the action given by
\[X(v_1\wedge\cdots\wedge v_k)=\sum_{i=1}^k v_1\wedge\cdots\wedge v_{i-1}\wedge X v_i\wedge
v_{i+1}\wedge\cdots\wedge v_k,\quad X\in\mathfrak{gl}_n,\]
and the following $\mathfrak{gl}_n$-module isomorphism is well known
\begin{equation}\label{Impo-Isomorphism}
\bigwedge\nolimits^{k}(\mathbb{C}^n)\cong V(\delta_{k},k),\quad 0\leq k\leq n.
\end{equation}



Set $L_{n}(P_{0},0)=0$. For $r=1,2,\dots,n$, let \[L_n(P_{0},r)=\text{Span}_{\C}
\Big\{\sum_{k=1}^{n}(\partial_{k}.p)\otimes\big(\epsilon_{k}
\wedge\epsilon_{i_2}\wedge\cdots\wedge\epsilon_{i_r}\big)\mid p\in P_{0},\ 1\leq i_2,i_{3},\dots,i_r\leq n\Big\}.\]
And for $r=0,1,\dots,n$, let
\[\tilde{L}_n(P_{0},r)=\{v\in F(P_{0},V(\delta_r,r))\mid {\mathcal W}_n^+ v\subseteq L_n(P_{0},r)\}.\]

\begin{lemma}\label{Useful-lem}
The list of all simple quotients of the $\fg$-module $F(P_0,M)$ for any simple $\gl_{n}$-module $M$ is given as follows:\\
(1) $F(P_0,M)$,
if $M \ncong V(\delta_{r},r)$ for any $r\in\{0,1,\cdots,n\}$;\\
(2) $F(P_0,V(\delta_{0},0))=P_0$;\\
(3)  $F(P_{0},V(\delta_{r},r))\big/
\tilde{L}_{n}(P_{0},r)$  for all $r=1,\ldots,n-1.$\\
(4) $F(P_{0},V(\delta_{n},n))/L_{n}(P_{0},n)$, which  is a $1$-dimensional module.
\end{lemma}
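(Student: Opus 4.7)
The plan is to invoke the general classification of tensor modules $F(P, M)$ from \cite{LLZ}, specialized to $P = P_0$, and then to identify the maximal proper submodule concretely in each exceptional case via a Koszul/de Rham type differential.

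I begin with the dichotomy proved in \cite{LLZ}: for a simple $\mathcal{K}_n^+$-module $P$ and a simple $\gl_n$-module $M$, the $\fg$-module $F(P, M)$ is simple unless $M$ is isomorphic to an exterior power of the natural representation, in which case $F(P, M)$ has a unique nonzero maximal proper submodule. By the isomorphism \eqref{Impo-Isomorphism}, this exception occurs precisely when $M \cong V(\delta_r, r)$ for some $0 \leq r \leq n$, yielding case (1) immediately. For case (2), since $V(\delta_0, 0)$ is the trivial $\gl_n$-module, the second sum in \eqref{Action1} vanishes, and $F(P_0, V(\delta_0, 0))$ collapses to $P_0$ with the $\mathcal{K}_n^+$-action restricted to $\fg$; the simplicity argument for $P_0$ already given works verbatim for the $\fg$-action.

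For cases (3) and (4), I would introduce the $\fg$-module homomorphism
\[d_r \colon F(P_0, V(\delta_{r-1}, r-1)) \longrightarrow F(P_0, V(\delta_r, r)), \qquad p \otimes (\epsilon_{i_2} \wedge \cdots \wedge \epsilon_{i_r}) \longmapsto \sum_{k=1}^n (\partial_k p) \otimes (\epsilon_k \wedge \epsilon_{i_2} \wedge \cdots \wedge \epsilon_{i_r}),\]
whose $\fg$-equivariance is a direct verification from \eqref{Action1} (the $t^\alpha \partial_j$-terms involving $\gl_n$-action on the wedge factor cancel with the $\partial_k$-derivative commutators) and whose image is by construction exactly $L_n(P_0, r)$. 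For $r = n$, identifying $P_0 \cong \C[\partial_1, \ldots, \partial_n]$, every basis vector $\partial^\alpha \otimes (\epsilon_1 \wedge \cdots \wedge \epsilon_n)$ with $\alpha \neq 0$ lies in the image of $d_n$, so $F(P_0, V(\delta_n, n))/L_n(P_0, n)$ is spanned by the class of $1 \otimes (\epsilon_1 \wedge \cdots \wedge \epsilon_n)$, settling case (4). Note that in this extremal case $\fg$ acts trivially on the quotient, so $\tilde{L}_n(P_0, n)$ would be the entire module; this is why $L_n(P_0, n)$, rather than $\tilde{L}_n(P_0, n)$, is the correct maximal proper submodule.

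For $1 \leq r \leq n-1$ (case (3)) the quotient $F/L_n(P_0, r)$ is not annihilated by $\fg$. The space $\tilde{L}_n(P_0, r)$ is automatically a $\fg$-submodule containing $L_n(P_0, r)$ from its defining property; concretely $\tilde{L}_n(P_0, r)/L_n(P_0, r)$ is the largest trivial $\fg$-submodule of $F/L_n(P_0, r)$. The main obstacle will be showing both that $\tilde{L}_n(P_0, r)$ is proper and that the resulting quotient is simple. I expect to handle this by invoking the algebraic Poincar\'e lemma for $\C[\partial_1, \ldots, \partial_n]$ to identify $F(P_0, V(\delta_r, r))/L_n(P_0, r)$ with $L_n(P_0, r+1) \subseteq F(P_0, V(\delta_{r+1}, r+1))$ via $d_{r+1}$, and then to argue by descending induction on $r$ starting from the already settled base case $r = n$, matching the unique maximal proper submodule supplied by \cite{LLZ} with the explicit subspace $\tilde{L}_n(P_0, r)$ throughout.
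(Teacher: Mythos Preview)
Your overall plan is reasonable and the de~Rham/Koszul framework you introduce is a classical and legitimate alternative, but it diverges from the paper's argument and your sketch for case~(3) has a genuine gap.

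The paper's proof is much shorter: for uniqueness of the simple quotient in the exceptional cases it shows by a direct weight computation that $\bar{1}\otimes v_r$ is a highest weight vector generating $F(P_0,V(\delta_r,r))$ and that its weight space is one-dimensional, so there is a unique maximal submodule. It then simply cites \cite[Theorem~3.5]{LLZ} to identify that submodule as $\tilde{L}_n(P_0,r)$ (and \cite[Theorem~3.5(5)]{LLZ} plus a one-line dimension count for case~(4)). No Koszul complex, no induction. Your approach instead tries to \emph{derive} the identification via the exact sequence $F_{r-1}\xrightarrow{d_r}F_r\xrightarrow{d_{r+1}}F_{r+1}$, which is more structural and in principle could make the argument independent of the finer statements in \cite{LLZ}.

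The gap is in your inductive step. Exactness gives $F_r/L_r\cong L_{r+1}$, and since $\partial_j$ acts injectively on every $F_s$ one has $\tilde{L}_r=L_r$ for $r\le n-1$; but to conclude that $F_r/L_r$ is simple you need to know that the unique maximal submodule $N_r$ of $F_r$ equals $L_r$, i.e.\ that $N_r/L_r=0$ inside $L_{r+1}$. Your proposed \emph{descending} induction from $r=n$ gives information about $F_{r+1}/L_{r+1}$, not about the internal submodule structure of $L_{r+1}$ itself, so the step does not close. An ascending induction from $r=0$ runs into the mirror problem. One really needs either the highest-weight argument (as in the paper) or the full strength of \cite[Theorem~3.5]{LLZ} to pin down $N_r$; your sketch uses \cite{LLZ} for existence of a unique maximal submodule anyway, so the Koszul machinery ends up being illustrative rather than load-bearing. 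Also, in case~(2) your claim that the $\mathcal{K}_n^+$-simplicity proof for $P_0$ ``works verbatim'' for the $\fg$-action needs a word: $\fg$ contains $t^\beta\partial_j$ rather than bare $t^\beta$, so one must choose $j$ carefully (or iterate) to reduce an arbitrary element to $\bar{1}$.
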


\begin{proof}
Part $(1)$ follows from \cite[Theorem 3.1]{LLZ}.

Let us first show that  $F(P_{0},V(\delta_{r},r))$
has a unique simple quotient.  For $r=0,1,\dots,n$, choose a highest weight vector $v_{r}$ in $V(\delta_{r},r)$. We compute that
\[t^{\alpha}\partial_{i}\circ(\bar{1}\otimes v_{r})=0,\quad   1\leq i\leq n, \alpha\in\Z_{+}^{n},\ |\alpha|\geq2;\]
\[t_{i}\partial_{j}\circ(\bar{1}\otimes v_{r})=\bar{1}\otimes (E_{ij}.v_{r}),\quad 1\leq i\neq j\leq n;\]
\[t_{i}\partial_{i}\circ(\bar{1}\otimes v_{r})=0,\quad 1\leq i\leq r;\]
\[t_{i}\partial_{i}\circ(\bar{1}\otimes v_{r})=-\bar{1}\otimes v_{r},\quad r+1\leq i\leq n;\]
and
\[\partial_{i}\circ(\bar{1}\otimes v)=\overline{\partial_{i}}\otimes v,\quad 1\leq i\leq n,\ v\in V(\delta_{r},r).\]
So, $F(P_{0},V(\delta_{r},r))$ is a highest weight $\fg$-module and $\bar{1}\otimes v_{r}$ is a
highest weight vector with $F(P_{0},V(\delta_{r},r))=\U(\fg)(\bar{1}\otimes v_{r})$. It is easy to check that \[\{w\in F(P_{0},M)\mid t_{i}\partial_{i}\circ w=0,\ t_{j}\partial_{j}\circ w=-w,\ 1\leq i\leq r,\ r+1\leq j\leq n\}=\C (\bar{1}\otimes v_{r}).\] We see that the $\fg$-module $F(P_{0},V(\delta_{r},r))$ has a unique maximal submodule. Hence, $F(P_{0},V(\delta_{r},r))$
has a unique simple quotient for all $r=0,1,\dots,n$.

The statements in part $(2)$ and $(3)$ follow from \cite[Theorem 3.5 (2)]{LLZ}.

For part $(4)$, we see that $\sum^n
_{k=1} \partial_kP_0 \ne P_0. $   From \cite[Theorem 3.5 (5)]{LLZ} we know that $F(P_{0},V(\delta_{n},n))/L_{n}(P_{0},n)$ is a trivial $\fg$-module. Clearly,   \[\text{Span}_{\C}\{\partial_{k}.p\mid k=1,2,\dots,n,\ p\in P_{0}\}=\text{Span}_{\C}\{\overline{\partial^{\alpha}}\mid \alpha\in\Z_{+}^{n},\ |\alpha|\geq1\}.\] This means that 
\[\dim F(P_{0},V(\delta_{n},n))/L_{n}(P_{0},n)=1.\]
In fact for part $(4)$, one can also see Proposition \ref{Exam-1} for more details.
\end{proof}

\begin{lemma}\label{key-l=1} Let $P$ be a simple $\mathcal{K}_n^+$-module and $M$ be a $\gl_n$-module.
	Suppose that $M$ has no finite dimensional nonzero $\mathfrak{gl}_n$-submodule. Then for any $0\ne p_0\in P$
	and any nontrivial submodule $V$ of $F(P,M)$, we have $V\cap(p_0\otimes M)\ne 0$.\end{lemma}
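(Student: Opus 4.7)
My plan is to start from an arbitrary $0\ne w\in V$ and, combining Jacobson density for the simple $\mathcal{K}_n^+$-module $P$ with the hypothesis on $M$, produce a nonzero element of $V\cap(p_0\otimes M)$. First I would pick a minimal witness: write $w=\sum_{i=1}^k g_i\otimes v_i$ with $v_1,\dots,v_k$ linearly independent in $M$ and $g_1,\dots,g_k$ linearly independent in $P$, with $k$ as small as possible over $V\setminus\{0\}$. Since $\partial_j\in\fg_{-1}$ acts purely on the $P$-factor (the $\gl_n$-twist in \eqref{Action1} vanishes because $\partial_i(1)=0$), one has $\partial^\beta\circ w=\sum_i(\partial^\beta g_i)\otimes v_i\in V$ for every $\beta\in\Z_+^n$; the minimality of $k$ forces the dichotomy that for each $\beta$ either all $\partial^\beta g_i$ vanish or they remain linearly independent in $P$.

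Next I would introduce the subspace $M_V\subseteq M$ defined as the smallest subspace with $V\subseteq P\otimes M_V$, and show that it is a $\gl_n$-submodule. Picking any $w=\sum_l g_l\otimes v_l\in V$ with $g_l$ linearly independent in $P$ and $v_l\in M_V$, the action formula gives
\[
(t_i\partial_j)\circ w\;=\;\sum_l(t_i\partial_j g_l)\otimes v_l\;+\;\sum_l g_l\otimes E_{ij}v_l\ \in\ V\subseteq P\otimes M_V.
\]
The first sum already lies in $P\otimes M_V$, so the second sum does too; by linear independence of the $g_l$'s, this forces $E_{ij}v_l\in M_V$, so $M_V$ is $\gl_n$-stable. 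Since $V\ne 0$ we have $M_V\ne 0$, and by the hypothesis on $M$ the submodule $M_V$ is infinite-dimensional.

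The central step is to combine Jacobson density with the $\fg$-action. By simplicity of $P$ as a $\mathcal{K}_n^+$-module, pick $u\in\mathcal{K}_n^+$ with $u\cdot g_1=p_0$ and $u\cdot g_i=0$ for $2\le i\le k$; then the ``pure'' action of $u$ on the $P$-factor would send $w$ to $p_0\otimes v_1\in p_0\otimes M$. Expanding $u$ as a combination of monomials $t^\alpha\partial^\beta$, each monomial is realised by an element of $\U(\fg)$ up to a $\gl_n$-twist coming from the Leibniz correction
\[
(t^\alpha\partial_j)\circ(g\otimes v)=(t^\alpha\partial_j g)\otimes v+\sum_i\alpha_i\,(t^{\alpha-\epsilon_i}g)\otimes E_{ij}v.
\]
Because $M_V$ is $\gl_n$-stable, all twist contributions remain in $P\otimes M_V$, so assembling the pieces produces an element of $V$ of the shape $p_0\otimes v_1+\xi$ with $\xi\in P\otimes M_V$. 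An induction on the complexity $k$, using the dichotomy from the first step together with the infinite-dimensionality of $M_V$, eliminates $\xi$ and exhibits a nonzero pure tensor $p_0\otimes v\in V$.

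The main obstacle is precisely this last step: systematically lifting the Weyl-algebra operator $u$ to a $\fg$-action modulo controllable errors, and then cancelling those errors. The hypothesis that $M$ has no finite-dimensional $\gl_n$-submodule is essential at exactly this point, since otherwise the $\gl_n$-twists could conspire to remain trapped in a small $\gl_n$-stable subspace and block the cancellation of $\xi$.
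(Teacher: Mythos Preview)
Your outline has a genuine gap at the final step. Everything through the $\gl_n$-stability of $M_V$ is fine. The problem is the last paragraph, where you assert that ``an induction on the complexity $k$, using the dichotomy from the first step together with the infinite-dimensionality of $M_V$, eliminates $\xi$.'' No mechanism is given, and none is apparent. When you lift a Weyl monomial $t^\alpha\partial^\beta$ through $\U(\fg)$, each factor $t^\alpha\partial_j$ produces corrections $\sum_i\alpha_i(t^{\alpha-\epsilon_i}g)\otimes E_{ij}v$; iterating, the error $\xi$ becomes a sum of many tensors with \emph{different} $P$-components and \emph{different} $M$-components. Your minimal-$k$ dichotomy only controls the pure $\partial^\beta$-action and says nothing once the $M$-components have been disturbed, and the bare fact that $\dim M_V=\infty$ gives no handle on cancelling these terms. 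In effect, the only use you make of the hypothesis on $M$ is to conclude that $M_V$ is infinite-dimensional, and you never explain how that conclusion feeds back into the cancellation. You yourself flag this step as ``the main obstacle''; as written it is not a proof but a hope.

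The paper's argument avoids this difficulty by a different route. Rather than lift an arbitrary $u\in\mathcal{K}_n^+$ and chase errors, it shows directly (Claim~1, a computation borrowed from \cite{LLZ}) that for \emph{every} $u\in\mathcal{K}_n^+$ one has $\sum_k(up_k)\otimes(\delta_{li}E_{lj}-E_{li}E_{lj})w_k\in V$: an arbitrary Weyl operator on the $P$-side against a \emph{fixed} quadratic $\gl_n$-expression on the $M$-side. Density then applies cleanly: assuming $V\cap(p_0\otimes M)=0$ forces $(\delta_{li}E_{lj}-E_{li}E_{lj})w_k=0$, and after a bootstrap (Claim~3) the same vanishing holds on all of $\U(\gl_n)w_k$. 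Specialising gives $E_{li}^2=0$ and $(E_{ii}-1)E_{ii}=0$ on $\U(\gl_n)w_1$, and a cited lemma from \cite{LZ} says any $\gl_n$-module satisfying these relations is finite-dimensional --- contradicting the hypothesis. The point is that the hypothesis is used not as a vague size constraint but as the exact obstruction to a specific set of $\gl_n$-relations; that is what your sketch is missing.
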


\begin{proof} Suppose to the contrary that $V$ is a nontrivial submodule of $F(P,M)$ with
	\begin{equation}\label{trivial}V\cap(p_0\otimes M)=0\end{equation} for some $0\ne p_0\in P$.
	
	Let $w=\sum\limits_{k=1}^{q} p_k\otimes w_k$ be a nonzero element in $V$ where $p_k\in P, w_k\in M$.
	(Later we will assume  that $p_1,p_{2},\dots,p_q$ are linear independent.)
	
	\noindent{\bf Claim 1}. For any $u\in \mathcal{K}_n^+$, $1\le i, j, l\le n$, we have
	\begin{equation*}\sum_{k =1}^q(u p_k)\otimes (\delta_{li}E_{lj}- E_{li} E_{lj})w_k\in V.\end{equation*}
	
	The proof is exactly the same with that of Claim 1 in \cite[Theorem 3.1]{LLZ}. We omit it.
	
	\noindent{\bf Claim 2}. If $p_1,p_{2},\dots, p_q$ are linear independent,
	then for any $1\le i, j, l\le n$ and any $k=1,2,\dots,q$, we have
	$$ (\delta_{li}E_{lj}- E_{li} E_{lj})w_k=0.$$
	
	Since $P$ is a simple $\mathcal{K}_n^+$-module, by the density theorem in ring theory, for any
	$p\in P$, there is a $u_k\in \mathcal{K}_n^+$ such that
	$$u_kp_i=\delta_{ik}p,\ i=1,2,\dots, q.$$
	From {\bf Claim 1} we see that $P \otimes (\delta_{li}E_{lj}- E_{li} E_{lj})w_k\subseteq V.$
	
	Then from (\ref{trivial}) we have $(\delta_{li}E_{lj}- E_{li} E_{lj})w_k=0$ as desired.
	
	From now on we assume that $p_1,p_{2},\dots, p_q$ are linear independent.
	
	\noindent{\bf Claim 3}. For any  $1\le i, j, l\le n$,  we have  $(\delta_{li}E_{lj}- E_{li} E_{lj})\U(\gl_n)w_k=0$.
	
	From $t_s\ptl_m \hskip -3pt \circ \hskip -3pt (\sum\limits_{k =1}^q p_k\otimes w_k)\in V$, we have
	$$\sum_{k =1}^q (t_s\ptl_m p_k\otimes w_k+  p_k\otimes E_{sm}w_k)\in V.$$
	By {\bf Claim 1}, we obtain $$\sum_{k =1}^q (ut_s\ptl_m p_k\otimes (\delta_{li}E_{lj}-E_{li} E_{lj})w_k+up_k\otimes (\delta_{li}E_{lj}- E_{li} E_{lj})E_{sm}w_k)\in V.$$
	By {\bf Claim 2}, we have
	$$\sum_{k =1}^q up_k\otimes (\delta_{li}E_{lj}- E_{li} E_{lj})E_{sm}w_k\in V,$$ for all $u\in \mathcal{K}_n^+$.
	Since $p_i$'s are linearly independent,  by taking different $u$ in the above formula, we deduce that  $$P \otimes (\delta_{li}E_{lj}- E_{li} E_{lj})E_{sm}w_k\subseteq V,\quad  k=1,2,\dots,q.$$
	From (\ref{trivial}) we have
	$$(\delta_{li}E_{lj}- E_{li} E_{lj})E_{sm}w_k=0,\quad 1\leq l, i, j, s, m\leq n,\ k=1,2,\dots,q. $$
	By repeatedly doing this procedure we deduce that
	$$(\delta_{li}E_{lj}- E_{li} E_{lj})\U(\gl_n)w_k=0,\quad 1\leq l, i, j\leq n,\ k=1,2,\dots,q. $$ {\bf Claim 3} follows.
	
	Now suppose that $w_1\ne 0$. Then
	$$(E_{ii}-1)E_{ii}\big(\U(\gl_n)w_1\big)=E_{li}^{2}\big(\U(\gl_n)w_1\big)=0,\quad 1\leq i\neq l\leq n.$$
	It follows from \cite[Lemma 2.3]{LZ} that $0\ne M':=\U(\gl_n)w_1$ is a finite dimensional $\gl_n$-module. A contradiction.
	This completes the proof.
\end{proof}

For any $\fg_{\ge 0}$-module $M$ with $\fg_{\ge1}.M=0$, it is clear that we can regard
$M$ as a $\gl_n$-module by $\gl_n\cong\fg_{\ge 0}/\fg_{\ge 1}=\fg_0$.
Let $\tau:\U(\gl_n)\rightarrow \U(\gl_n)$ be an isomorphism defined by
$\tau|_{\sl_n}=1_{\sl_n},\tau(E_{ii})=E_{ii}+1$. Then we make it into a new module $M^{\tau}=M$
with the new action 
$$x\circ v=\tau(x)v, \forall x\in\gl_n, v\in M.$$

\begin{lemma}\label{iso}Let $M$ be a $\fg_{\ge 0}$-module with $\fg_{\ge1}.M=0$. Then we have the natural isomorphism $\mathrm{Ind}_{\fg_{\ge 0}}^{\fg} M\cong F(P_0,M^{\tau})$.\end{lemma}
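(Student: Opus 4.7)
The plan is to invoke the universal property of the induced module. Since $\fg_{\ge 1}.M=0$, we may regard $M$ as a $\fg_{\ge 0}$-module via the quotient $\fg_{\ge 0}\twoheadrightarrow\fg_{\ge 0}/\fg_{\ge 1}\cong\fg_0\cong\gl_n$. By the universal property, to construct a $\fg$-module map
\[\Phi:\mathrm{Ind}_{\fg_{\ge 0}}^{\fg}M\longrightarrow F(P_0,M^{\tau})\]
it suffices to specify a $\fg_{\ge 0}$-module map $\iota:M\to F(P_0,M^{\tau})$, and the natural choice is $\iota(v)=\bar 1\otimes v$.

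The verification that $\iota$ is $\fg_{\ge 0}$-linear splits in two. For $t^{\alpha}\partial_j\in\fg_{\ge 1}$, both sides vanish: the domain side is zero since $\fg_{\ge 1}.M=0$; the codomain side is zero because for $|\alpha|\ge 2$ the calculation $\overline{t^\beta \partial^\gamma}=(-1)^{|\beta|}\beta!\binom{\gamma}{\beta}\overline{\partial^{\gamma-\beta}}$ established earlier in the paper gives $\overline{t^\alpha\partial_j}=0$ and $\overline{\partial_k(t^\alpha)}=\overline{\alpha_k t^{\alpha-\epsilon_k}}=0$, so formula \eqref{Action1} yields $(t^\alpha\partial_j)\circ(\bar 1\otimes v)=0$. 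For $t_i\partial_j\in\fg_0$, the same formula together with $\overline{t_i\partial_j}=-\delta_{ij}\bar 1$ gives
\[(t_i\partial_j)\circ(\bar 1\otimes v)=-\delta_{ij}\bar 1\otimes v+\bar 1\otimes E_{ij}^{\tau}v=\bar 1\otimes\bigl(E_{ij}^{\tau}-\delta_{ij}\bigr)v.\]
Because $\tau$ is the identity on $\sl_n$ and sends $E_{ii}$ to $E_{ii}+1$, we have $E_{ij}^{\tau}-\delta_{ij}=E_{ij}$, so the right-hand side equals $\iota(E_{ij}v)$, matching the $\fg_0$-action on $M$ under the identification $t_i\partial_j\leftrightarrow E_{ij}$.

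Once $\Phi$ is produced, I would check bijectivity via PBW. As a vector space, $\mathrm{Ind}_{\fg_{\ge 0}}^{\fg}M\cong\U(\fg_{-1})\otimes_{\C}M\cong\C[\partial_1,\dots,\partial_n]\otimes M$, and fixing a basis $\{v_i\}$ of $M$ gives the basis $\{\partial^{\alpha}\otimes v_i\}_{\alpha\in\Z_+^n}$. Taking $\alpha=0$ in \eqref{Action1} shows that $\partial_i$ acts on $F(P_0,M^{\tau})$ simply as $\partial_i\otimes\mathrm{id}$, hence
\[\Phi(\partial^{\alpha}\otimes v_i)=\partial^{\alpha}\circ(\bar 1\otimes v_i)=\overline{\partial^{\alpha}}\otimes v_i.\]
Since $\{\overline{\partial^{\alpha}}\}_{\alpha\in\Z_+^n}$ is a basis of $P_0$, $\Phi$ maps a basis to a basis and is therefore an isomorphism.

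The only conceptual step is identifying the correct twist. Without passing from $M$ to $M^{\tau}$, the map $\iota$ would fail to be $\fg_0$-linear precisely because of the extra $-\delta_{ij}\bar 1$ produced when $t_i\partial_j$ acts on $\bar 1\in P_0$. The shift $\tau(E_{ii})=E_{ii}+1$ is engineered exactly to cancel this defect, so once the computation above is carried out the rest is automatic; this will be the only potential obstacle and it is resolved by the definition of $\tau$ itself.
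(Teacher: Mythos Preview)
Your proof is correct and follows essentially the same approach as the paper: both identify $\bar 1\otimes M^{\tau}\subset F(P_0,M^{\tau})$ as a $\fg_{\ge 0}$-submodule isomorphic to $M$, invoke the universal property of induction, and then verify bijectivity by computing the action of $\partial_i$. Your version simply makes explicit the verification (especially the role of $\tau$ in cancelling the $-\delta_{ij}$ term) that the paper leaves as ``it is clear.''
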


\begin{proof}It is clear that $\C\otimes M^{\tau}$ is a $\fg_{\ge 0}$-submodule of $F(P,M^{\tau})$
that is naturally isomorphic to $M$. Hence we have the induced $\fg$-module epimorphism
\[\psi:\mathrm{Ind}_{\fg_{\ge 0}}^{\fg} M\rightarrow F(P,M^{\tau})\]such that $\psi(v)=1\otimes v$, $v\in M$.
By computing  the action of $\partial_i$, we deduce that $\psi$ is an isomorphism of vector spaces,
which means that $\psi$ is a module  isomorphism.\end{proof}

The following result is our second main result.
\begin{theorem}\label{the second main-result} Any simple smooth $\fg$-module $V$ with the $\ell_{V}=1$ is isomorphic
to a   module   listed in Lemma \ref{Useful-lem}. Furthermore, the nontrivial modules listed in Lemma \ref{Useful-lem} are all simple smooth $\fg$-modules with height $1$.\end{theorem}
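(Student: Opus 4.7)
For the forward direction, let $V$ be a simple smooth $\fg$-module with $\ell_V=1$. Then $V^{(1)}$ is a nonzero $\gl_n\cong\fg_0$-module on which $\fg_{\geq 1}$ acts trivially, and the inclusion $V^{(1)}\hookrightarrow V$ together with Lemma \ref{iso} furnishes a surjective $\fg$-homomorphism
\[
\pi : F(P_0,(V^{(1)})^{\tau}) \;\cong\; \mathrm{Ind}_{\fg_{\geq 0}}^{\fg} V^{(1)} \;\twoheadrightarrow\; V,
\]
satisfying $\pi(\bar{1}\otimes v)=v$ for every $v\in V^{(1)}$; in particular $\ker\pi\cap(\bar{1}\otimes(V^{(1)})^{\tau})=0$. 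The next step is to locate a simple $\gl_n$-submodule $M\subseteq V^{(1)}$ via a case split. If $V^{(1)}$ contains a nonzero finite-dimensional $\gl_n$-submodule, then since every finite-dimensional $\gl_n$-module admits a simple $\gl_n$-submodule (use that $\sl_n$ is reductive and that the central element $I=\sum_i t_i\partial_i$ acts as a scalar on $\sl_n$-simples), take $M$ inside it. Otherwise $(V^{(1)})^{\tau}$ has no nonzero finite-dimensional $\gl_n$-submodule, and Lemma \ref{key-l=1} applied to $\ker\pi$ forces $\ker\pi=0$; thus $V\cong F(P_0,(V^{(1)})^{\tau})$, and simplicity of $V$ forces $(V^{(1)})^{\tau}$, hence $V^{(1)}$, to be simple as a $\gl_n$-module, since any proper nonzero $N'\subsetneq(V^{(1)})^{\tau}$ would produce a proper nonzero $\fg$-submodule $F(P_0,N')\subsetneq F(P_0,(V^{(1)})^{\tau})$; in this case set $M=V^{(1)}$. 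Either way, the same construction furnishes a surjective $\fg$-homomorphism $F(P_0,M^{\tau})\twoheadrightarrow V$, so $V$ is a simple quotient of $F(P_0,M^{\tau})$ for the simple $\gl_n$-module $M^{\tau}$, and Lemma \ref{Useful-lem} then places $V$ on the stated list.

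For the backward direction, I would check that each nontrivial module listed in Lemma \ref{Useful-lem} is simple, smooth, and of height $1$. Simplicity is given. Smoothness follows from \eqref{Action1}: a direct computation shows $(t^{\beta}\partial_j)\circ(\bar{\partial^{\alpha}}\otimes v)=0$ whenever $|\beta|\geq|\alpha|+2$, so $\bar{\partial^{\alpha}}\otimes v$ is annihilated by $\fg_{\geq|\alpha|+1}$, and the property descends to quotients. For height $1$, one has $\fg_{\geq 1}\circ(\bar{1}\otimes v)=0$ for every $v$, so $\bar{1}\otimes v$ always lies in the $V^{(1)}$ of each module; verifying that its image remains nonzero in each quotient (for case (3), choose $v\in V(\delta_r,r)$ with $E_{ij}v\neq 0$ for some $i\neq j$, possible for $1\leq r\leq n-1$, to ensure $\bar{1}\otimes v\notin\tilde L_n(P_0,r)$) gives $\ell_V\leq 1$, while Proposition \ref{Property-Prop} provides $\ell_V\geq 1$.

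The main obstacle will be establishing $\ker\pi=0$ in the infinite-dimensional branch of the case split, which depends essentially on Lemma \ref{key-l=1} together with the explicit identity $\pi(\bar{1}\otimes v)=v$ on $\bar{1}\otimes(V^{(1)})^{\tau}$. Everything else reduces to bookkeeping with \eqref{Action1} and invocations of Lemma \ref{Useful-lem}.
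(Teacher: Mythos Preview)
Your proposal is correct and follows essentially the same route as the paper: the same epimorphism $\pi$ via Lemma~\ref{iso}, the same case split on whether $(V^{(1)})^{\tau}$ has a finite-dimensional $\gl_n$-submodule, the same invocation of Lemma~\ref{key-l=1} to kill $\ker\pi$ in the infinite-dimensional branch, and the same reduction to Lemma~\ref{Useful-lem}. Your backward direction is more explicit than the paper's (which simply observes $\fg_{\geq 1}\circ(\bar{1}\otimes w)=0$ and appeals to Proposition~\ref{Property-Prop} implicitly), but the substance is identical.
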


\begin{proof}From Lemma \ref{iso}, we have an epimorphism
\[\phi: F(P_{0},(V^{(1)})^\tau)\cong\mathrm{Ind}_{\fg_{\ge 0}}^{\fg}V^{(1)}\rightarrow V\]
with $\phi(1\otimes v)=v,$ for all $v\in V^{(1)}$. Let $K=\ker \phi$.

If  $(V^{(1)})^\tau$
	has no a finite dimensional $\mathfrak{gl}_n$-submodule, then it follows from Lemma \ref{key-l=1} that we have $K=0$.
This gives that \[F(P_{0}, (V^{(1)})^\tau)\cong V,\]
which implies the simplicity of $(V^{(1)})^\tau$. 

Now we may assume that $(V^{(1)})^\tau$
has a finite dimensional $\mathfrak{gl}_n$-submodule, hence it has a finite dimensional simple $\mathfrak{gl}_n$-submodule $M$.
Then $\phi(F(P_{0},M))\ne 0$ is a submodule of the simple module $V$, i.e.,
$\phi(F(P_{0},M))=V$. We see that $V$ is a simple quotient module of $F(P_{0},M)$.  Using Lemma \ref{Useful-lem}, we can obtain the first statement in the theorem.

For any $\mathfrak{gl}_n$-simple module $W$ and $w \in W$, we directly compute that \[t^{\alpha}\partial_{i}\circ(\bar{1}\otimes w)=0,\quad   1\leq i\leq n, \alpha\in\Z_{+}^{n},\ |\alpha|\geq2,\]
	i.e., $\mathfrak{g}_{\geq1}\circ(\bar{1}\otimes w)=0$. 
	Therefore, $F(P_{0}, W)$ is a smooth $\fg$-module with height $1$ or $0$. Then the nontrivial modules listed in Lemma \ref{Useful-lem} are all simple smooth $\fg$-modules with height $1$.
\end{proof}



We remark that Rudakov  proved the above result   for his algebras in \cite{R} by using totally different methods. 

Combining   {Proposition \ref{Property-Prop}}, Theorem \ref{main-result},   {Lemma \ref{n=1}} and Theorem \ref{the second main-result} we obtain all simple smooth 
modules over $\mathcal{W}_{n}^{+}$.

\begin{theorem}\label{result} Any simple smooth $\fg$-module is either  isomorphic
to a   module   listed in Lemma \ref{Useful-lem}, or  isomorphic to an induced module $ \mathrm{Ind}_{\mathfrak{g}_{\geq0}}^{\mathfrak{g}}E$ for some simple   module $E$ over a finite-dimensional  Lie algebra $\mathfrak{g}^{(\ell)}$ for some $\ell\ge 2$ with $\mathfrak{g}_{\ell-1}E\ne0$.
 \end{theorem}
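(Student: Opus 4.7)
The plan is to assemble the classification of simple smooth $\fg$-modules directly from the structural results already established in the paper. Given a simple smooth $\fg$-module $V$, I would argue by trichotomy on whether $V$ is trivial, has height $\ell_V=1$, or has height $\ell_V\ge 2$, and in each case invoke exactly one earlier result.

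First, if $V$ is the trivial $1$-dimensional module, it is placed in Lemma \ref{Useful-lem} as the $1$-dimensional quotient $F(P_0,V(\delta_n,n))/L_n(P_0,n)$ appearing in part (4). Otherwise $V$ is nontrivial, and Proposition \ref{Property-Prop} yields $\ell_V\ge 1$. For the case $\ell_V=1$ with $n\ge 2$, Theorem \ref{the second main-result} asserts that $V$ is isomorphic to one of the simple quotients enumerated in Lemma \ref{Useful-lem}. When $n=1$ (still $\ell_V=1$), Lemma \ref{n=1} yields $V\cong\mathrm{Ind}_{\fg_{\geq0}}^{\fg}V^{(1)}$ with $V^{(1)}$ a $1$-dimensional simple $\fg_{\geq0}$-module; via Lemma \ref{iso} this is identified with $F(P_0,(V^{(1)})^\tau)$, which falls into the Lemma \ref{Useful-lem} family.

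For the case $\ell_V\ge 2$, I would apply Theorem \ref{main-result} to write $V\cong\mathrm{Ind}_{\fg_{\geq0}}^{\fg}V^{(\ell_V)}$, where $V^{(\ell_V)}$ is a simple smooth $\fg_{\geq0}$-module. Since $\fg_{\geq\ell_V}$ annihilates $V^{(\ell_V)}$ by definition of $V^{(\ell_V)}$, the $\fg_{\geq0}$-action factors through the finite-dimensional quotient $\fg^{(\ell_V)}:=\fg_{\geq0}/\fg_{\geq\ell_V}$. Setting $\ell=\ell_V\ge 2$ and $E=V^{(\ell_V)}$ produces the induced-module alternative of the theorem. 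The nondegeneracy condition $\fg_{\ell-1}E\neq 0$ follows from the minimality built into the definition of the height: if $\fg_{\ell_V-1}V^{(\ell_V)}=0$, then every $v\in V^{(\ell_V)}$ would also satisfy $\fg_{\geq\ell_V-1}v=0$ and hence lie in $V^{(\ell_V-1)}$, contradicting $V^{(\ell_V-1)}=0$.

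There is essentially no hard obstacle to surmount; the theorem is a bookkeeping corollary once the preceding results are in hand. The only delicate point I would watch is the $n=1$, $\ell_V=1$ corner case, where Lemma \ref{Useful-lem} was formulated under the blanket assumption $n\ge 2$: one either observes that Lemma \ref{iso} translates the output of Lemma \ref{n=1} to $F(P_0,(V^{(1)})^\tau)$, matching the form in Lemma \ref{Useful-lem} once the small-$n$ case is allowed, or equivalently absorbs this case into the induced-module clause. Organizing these four results in the order trivial, $\ell_V=1$, $\ell_V\ge 2$ then yields the stated dichotomy with no further computation.
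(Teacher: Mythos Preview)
Your proposal is correct and follows essentially the same approach as the paper: the paper's own ``proof'' is simply the sentence preceding the theorem, namely that the result follows by combining Proposition \ref{Property-Prop}, Theorem \ref{main-result}, Lemma \ref{n=1}, and Theorem \ref{the second main-result}. You have in fact supplied more detail than the paper does, including the verification that $\fg_{\ell-1}E\neq 0$ and the correct identification of the $n=1$, $\ell_V=1$ corner case as the one place where Lemma \ref{Useful-lem}'s standing hypothesis $n\ge 2$ needs a word of justification.
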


\section{Characterization of simple smooth $\fg$-modules}

In this section we will characterize simple smooth $\fg$-modules.

\begin{lemma}\label{L5.1}
	Let $V$ be a nonzero $\fg$-module. Assume that there are  some $k_{1},k_{2},\dots,$ $k_{n}\in\N$ such that each of $t_{1}^{k_{1}+1}\partial_{1},t_{2}^{k_{2}+1}\partial_{2},\cdots,t_{n}^{k_{n}+1}\partial_{n}$
	acts locally finitely on $V$. Then   there exists nonzero $v_+\in V$, $\lambda_1,\lambda_2, \cdots, \lambda_n\in\mathbb{C}$  and $N\in\N$ such that 
	\[\aligned t_{i}^{k_{i}+1}\partial_{i}v_+=&\lambda_iv_+,\\
t_{i}^{r +1}\partial_{i}v_+=&0\quad \text{for all}\quad r \geq N,\ i=1,2,\dots,n.\endaligned\]
\end{lemma}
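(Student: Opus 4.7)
Write $e_i := t_i^{k_i+1}\partial_i$. The plan is in three steps: commutativity, joint eigenvector, then annihilation.

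First, I would verify the key bracket relations from \eqref{lie-bracket}. For $i\ne j$, both $\partial_i(t_j^{k_j+1})$ and $\partial_j(t_i^{k_i+1})$ vanish, so $[e_i,e_j]=0$; thus $e_1,\dots,e_n$ pairwise commute. For any $r\ge 0$ and any $i,j$, a direct computation gives
\[
[e_j,\,t_i^{r+1}\partial_i]=\delta_{ij}(r-k_i)\,t_i^{r+k_i+1}\partial_i.
\]
In particular $[e_i,t_i^{r+1}\partial_i]=(r-k_i)\,t_i^{r+k_i+1}\partial_i$.

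Second, I would extract a joint strict eigenvector. Pick any nonzero $v_0\in V$. By induction on $j$, the subspace $W_j:=\mathbb{C}[e_1,\dots,e_j]v_0$ is finite-dimensional (local finiteness of $e_j$ is preserved when the $e_i$'s commute) and stable under $e_1,\dots,e_j$. On the finite-dimensional $e_i$-invariant space $W_n$, the pairwise commuting operators admit a common generalized eigenspace decomposition, and one extracts a joint strict eigenvector $v_1\in W_n$ with $e_iv_1=\lambda_iv_1$ for some $\lambda_i\in\mathbb{C}$.

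Third, to upgrade $v_1$ so that the annihilation condition also holds, I would consider
\[
V_0:=\bigl\{v\in V : \exists\,N=N(v)\in\mathbb{N},\ t_i^{r+1}\partial_i v=0\ \text{for all}\ i\ \text{and}\ r\ge N\bigr\}.
\]
Using the commutator $[e_j,t_i^{r+1}\partial_i]$ computed above, a direct check shows $V_0$ is $e_j$-stable for every $j$. Hence, if $V_0\ne 0$, applying step two inside $V_0$ (where each $e_j$ still acts locally finitely, since this holds on all of $V$) produces the desired $v_+\in V_0$ with $e_iv_+=\lambda_iv_+$ and the uniform annihilation bound $N$.

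The main obstacle, therefore, is showing $V_0\ne 0$. My strategy uses the iterated identity, valid because $v_1$ is a strict $e_i$-eigenvector:
\[
(e_i-\lambda_i)^m\bigl(t_i^{r+1}\partial_i v_1\bigr)=\Bigl(\prod_{j=0}^{m-1}(r+jk_i-k_i)\Bigr)\,t_i^{r+mk_i+1}\partial_i v_1,
\]
whose scalar coefficient is nonzero for $r>k_i$. Since $e_i$ acts locally finitely on $V$, the subspace $\mathbb{C}[e_i]\cdot t_i^{r+1}\partial_i v_1$ is finite-dimensional; moreover $t_i^{r+1}\partial_i v_1$ is already a strict $e_j$-eigenvector with eigenvalue $\lambda_j$ for every $j\ne i$ (because $[e_j,t_i^{r+1}\partial_i]=0$). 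Using these two facts I would refine $v_1$ (by moving within a joint generalized eigenspace and, if necessary, replacing it by a projection onto a generalized $\lambda_i$-component) so that $t_i^{r+1}\partial_i v_1$ lies in the generalized $\lambda_i$-eigenspace of $e_i$, making $(e_i-\lambda_i)$ locally nilpotent on it. The iteration then forces $t_i^{r+mk_i+1}\partial_i v_1=0$ for all large $m$, and handling the finitely many residue classes mod $k_i$ gives a uniform bound for each fixed $i$. Coordinating this construction across all $i$ (each refinement preserving the eigenvector property for the others, since the $e_j$ commute with $t_i^{r+1}\partial_i$ for $j\ne i$) produces the single vector $v_+$ required.
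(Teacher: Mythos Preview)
Your first two steps---commutativity of the $e_i=t_i^{k_i+1}\partial_i$ and extraction of a common strict eigenvector---are correct and coincide with the paper's argument. The paper then defers the annihilation claim entirely to \cite[Lemma~3.10]{MNTZ}, so your attempt to argue it directly is the substantive part; unfortunately that part contains a real gap.

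Your chain identity
\[
(e_i-\lambda_i)^m\bigl(t_i^{r+1}\partial_i v_1\bigr)=\Bigl(\prod_{j=0}^{m-1}(r+(j-1)k_i)\Bigr)\,t_i^{r+mk_i+1}\partial_i v_1
\]
is right, and local finiteness of $e_i$ does force $U_r:=\operatorname{span}\{t_i^{r+mk_i+1}\partial_i v_1:m\ge 0\}$ to be finite-dimensional. But that alone does \emph{not} make the chain terminate at $0$. On $U_r$ the operator $e_i-\lambda_i$ acts as a rescaled shift on the spanning set, and nothing prevents its minimal polynomial from having nonzero constant term; the vectors $t_i^{r+mk_i+1}\partial_i v_1$ can perfectly well recur without any of them vanishing. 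Equivalently, there is no a~priori reason for $t_i^{r+1}\partial_i v_1$ to lie in the generalized $\lambda_i$-eigenspace of $e_i$. Your proposed ``refinement'' cannot repair this: $v_1$ is already a strict $e_i$-eigenvector, so projecting $v_1$ is the identity, while projecting $t_i^{r+1}\partial_i v_1$ onto its generalized $\lambda_i$-component does not produce anything of the form $t_i^{r+1}\partial_i(\text{new }v_1)$. There is no mechanism in your sketch that forces all the images into the correct generalized eigenspace simultaneously.

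What is actually needed---supplied in \cite{MNTZ}, and whose flavour you can see in the proof of the paper's next lemma (Lemma~\ref{CL-1})---is a degree-reduction argument: from local finiteness one first obtains a minimal-degree polynomial $g$ with $g(e_i)\,t_i^{sk_i+j+1}\partial_i v_+=0$ for all large $s$, and then one manufactures a \emph{second}, independent linear relation among the same family of vectors by exploiting an additional Witt bracket. Comparing the two relations (their coefficients grow differently in $s$) contradicts minimality of $\deg g$ unless $\deg g=0$, which is exactly the desired vanishing. Your outline produces only the single relation coming from local finiteness, and without the second relation the argument cannot close.
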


\begin{proof}  Since Span$\{t_{1}^{k_{1}+1}\partial_{1},t_{2}^{k_{2}+1}\partial_{2},
     \cdots,t_{n}^{k_{n}+1}\partial_{n}\}$ is a commutative Lie algebra and 
     $W=\mathbb{C}[t_{1}^{k_{1}+1}\partial_{1},t_{2}^{k_{2}+1}\partial_{2},
     \dots,t_{n}^{k_{n}+1}\partial_{n}]u$
     is finite dimensional for a nonzero $u\in V$, we can have a common eigenvector $v_+\in W$ for all $t_{1}^{k_{1}+1}\partial_{1},t_{2}^{k_{2}+1}\partial_{2},
     \cdots,t_{n}^{k_{n}+1}\partial_{n}$, i.e.,
      $$t_{i}^{k_{i}+1}\partial_{i}v_+=\lambda_iv_+,$$
     for some $\lambda_1,\lambda_2, \cdots, \lambda_n\in\mathbb{C}$.

     After carefully checking the proof of  Lemma 3.10 in \cite{MNTZ}, we know that  this vector $v_+$ satisfies the requirements in the lemma, i.e., there exists     $N\in\mathbb{N}$ such that  
	\[t_{i}^{r +1}\partial_{i}v_+=0\quad \text{for all}\quad r \geq N,\ i=1,2,\dots,n.\]   
    This completes the proof.
\end{proof}

Furthermore, we can prove the following Lemma.
\begin{lemma}\label{CL-1}
	Let $V$ be a nonzero $\fg$-module, and let $\alpha=(\alpha_1,\dots,\alpha_n)\in\mathbb{Z}_+^n$. Assume that there are  some $k_{1},k_{2},\dots,$ $k_{n}\in\N$ such that each of $t_{1}^{k_{1}+1}\partial_{1},t_{2}^{k_{2}+1}\partial_{2},\dots, t_{n}^{k_{n}+1}\partial_{n}$
	acts locally finitely on $V$. Let $v_+$ be the same vector as in Lemma \ref{L5.1}. Then for each pair $1\le p,q\le n$, there exists some $M=M(p,q,\alpha)\in \mathbb{Z}_+$ such that 
	\begin{equation*}
		t_p^{k+1}t^\alpha\partial_qv_+=0,~\forall k>M.
	\end{equation*}
\end{lemma}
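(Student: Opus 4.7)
The plan is to translate the desired vanishing into a statement about the iterated action of the locally-finite operator $T_p := t_p^{k_p+1}\partial_p$ on specific vectors, then exploit finite-dimensionality of cyclic $\mathbb{C}[T_p]$-submodules together with the eigenvalue/annihilation data of $v_+$ furnished by Lemma \ref{L5.1}.

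Setting $V_b := t_p^{b+1}t^\alpha \partial_q v_+$, the goal is to produce $M$ with $V_b = 0$ for $b > M$. First I compute the key commutator (case $p\neq q$; there is an analogous formula for $p=q$ with a modified scalar):
\[
[t_p^{r+1}\partial_p,\, t_p^{b+1}t^\alpha \partial_q] \;=\; (b+1+\alpha_p)\, t_p^{r+b+1}t^\alpha \partial_q.
\]
Applying both sides to $v_+$ and using the two properties of $v_+$ from Lemma \ref{L5.1} in turn yields two useful shift relations: (i) with $r \geq N$, so $t_p^{r+1}\partial_p v_+ = 0$, one obtains $t_p^{r+1}\partial_p V_b = (b+1+\alpha_p)\, V_{r+b}$; (ii) with $r = k_p$, so $t_p^{k_p+1}\partial_p v_+ = \lambda_p v_+$, one obtains the key recurrence $(T_p - \lambda_p) V_b = (b+1+\alpha_p)\, V_{k_p+b}$. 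Iterating (ii) gives $V_{mk_p+b} = C_m(b)\,(T_p - \lambda_p)^m V_b$ with $C_m(b)$ an explicit nonzero scalar.

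Next, by the standing hypothesis $T_p$ acts locally finitely on $V$, so the cyclic subspace $\mathbb{C}[T_p]V_b$ is finite-dimensional. Decompose $V_b = V_b^{(\lambda_p)} + V_b'$ into the generalized $\lambda_p$-eigenspace component plus the sum of the generalized $\mu$-eigenspace components for $\mu\neq \lambda_p$. On the first component, $(T_p - \lambda_p)$ acts nilpotently, so $(T_p-\lambda_p)^m V_b^{(\lambda_p)} = 0$ for $m$ large. The conclusion $V_{mk_p+b} = 0$ for $m$ large therefore reduces to showing $V_b'=0$, i.e.\ that $V_b$ lies entirely in the generalized $\lambda_p$-eigenspace of $T_p$.

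The main obstacle is precisely ruling out these extra generalized eigenvalues $\mu \neq \lambda_p$. The key lever is the stronger annihilation property $t_p^{r+1}\partial_p v_+ = 0$ for all $r\geq N$ from Lemma \ref{L5.1}: expanding $t_p^{r+1}\partial_p$ (for $r \geq N$, $r \neq 2k_p$) as an iterated Lie bracket of $T_p$ via the relation $[T_p, t_p^{s+1}\partial_p] = (s-k_p)t_p^{s+k_p+1}\partial_p$, one obtains a second polynomial expression of $V_{r+b}$ in terms of $T_p$ acting on $V_b$. Comparing this with the expression coming from iterating (ii) produces a polynomial in $T_p$, whose only root can be $\lambda_p$, annihilating $V_b$; this forces $V_b'=0$. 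Finally, the bound $M(p,q,\alpha)$ is obtained by taking the maximum over the finitely many residue classes $b\in\{0,1,\ldots,k_p-1\}$ needed to cover all sufficiently large $k$, so the degenerate cases where some scalar $C_m(b)$ or bracket coefficient vanishes can be handled by a direct separate computation using (i).
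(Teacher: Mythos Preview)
Your shift relations (i) and (ii) are correct and coincide with the paper's setup, and reducing to the claim that $V_b$ lies in the generalized $\lambda_p$-eigenspace of $T_p$ is equivalent to the paper's minimality formulation. The gap is in the paragraph where you rule out other generalized eigenvalues. You write that ``expanding $t_p^{r+1}\partial_p$ \ldots\ as an iterated Lie bracket of $T_p$ via $[T_p,t_p^{s+1}\partial_p]=(s-k_p)t_p^{s+k_p+1}\partial_p$, one obtains a second polynomial expression of $V_{r+b}$ in terms of $T_p$ acting on $V_b$,'' and that comparing yields ``a polynomial in $T_p$, whose only root can be $\lambda_p$, annihilating $V_b$.'' Neither claim is justified: iterated brackets of $T_p$ with itself vanish, and $(\mathrm{ad}\,T_p)^m$ applied to another Lie-algebra element never produces a polynomial in the operator $T_p$ acting on $V$. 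Nothing in your outline manufactures a second polynomial in $T_p$ annihilating $V_b$, let alone one with $\lambda_p$ as its only root.

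The paper closes exactly this gap, but by an elimination argument rather than an eigenvalue argument. Fixing a residue class $j$ modulo $k_p$, it takes the \emph{shortest} linear dependence among $V_{sk_p+j},\dots,V_{(s+n_j)k_p+j}$ (equivalently a minimal polynomial $g_j$ for $T_p$, valid for all $s\ge s_j$ by your (ii)). Writing this relation at index $2s$ gives one relation among $V_{2sk_p+j},\dots,V_{(2s+n_j)k_p+j}$. Then it applies $X=t_p^{sk_p+1}\partial_p$ (which kills $v_+$, so your (i) applies) to the relation at index $s$: since $XV_{(s+l)k_p+j}$ is a nonzero scalar times $V_{(2s+l)k_p+j}$ with the scalar genuinely depending on $l$, this gives a \emph{second} relation among the same vectors with different coefficients. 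A direct computation shows the two top coefficients, as polynomials in $s$, are linearly independent; eliminating the top term yields a relation of length $<n_j+1$, contradicting minimality unless $n_j=0$. Your ingredient (i) is precisely what is needed here, but it must be applied to the whole minimal relation, not used to pretend that $t_p^{r+1}\partial_p$ is a polynomial in $T_p$.
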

\begin{proof}
Without loss of  generality, we can assume that $q=1$ and $\alpha_p=0$.
According to Lemma \ref{L5.1}, there exists $N\in\N$ such that 
\[\aligned
 t_{p}^{k_{p}+1}\partial_{p}v_+=\lambda_pv_+ ,\quad \text{and}\quad
t_{p}^{r +1}\partial_{p}v_+=0\quad \text{for} \ r \geq N.
\endaligned\]

	For any fix $j\in\{k_p+1,k_p+2,\dots,2k_p\}$. Since $\dim \big(\mathbb{C}[t_{p}^{k_p+1}\partial_{p}](t_{p}^{j+1}t^{\alpha}\partial_{1})v_+
\big) $ is finite and from the relation
	\[(t_{p}^{k_p+1}\partial_{p}-\lambda_{p})t_{p}^{j+1}t^{\alpha}\partial_{1}v_+=
	\big(j+1-\delta_{p,1}(k_p+1)\big)t_{p}^{k_p+j+1}t^{\alpha}\partial_{1}v_+,\]
	there is a smallest $n_{j}\in\Z_{+}$ such that
	\[t_{p}^{s_{j}k_p+j+1}t^{\alpha}\partial_{1}v_+,\ t_{p}^{(s_{j}+1)k_p+j+1}t^{\alpha}\partial_{1}v_+,\ \dots,\ t_{p}^{(s_{j}+n_{j})k_p+j+1}t^{\alpha}\partial_{1}v_+\]
	are linearly dependent for some $s_{j}\in\Z_{+}$. Henceforth, there exists a polynomial
	\[g_{j}(y)=a_{j,0}+a_{j,1}(y-\lambda_{i})+\cdots+a_{j,n_{j}}(y-\lambda_{i})^{n_{j}},\quad a_{j,0}a_{j,n_{j}}\neq0,\]
	such that
	\[g_{j}(t_{p}^{k_p+1}\partial_{p})(t_{p}^{s_{j}k_p+j+1}t^{\alpha}\partial_{1}v_+)=0.\]
	Applying $t_{p}^{k_p+1}\partial_{p}-\lambda_{p}$ to the equation above repeatedly, we deduce that
	\begin{align}\label{the-first-equation-1}
		g_{j}(t_{p}^{k_p+1}\partial_{p})(t_{p}^{sk_p+j+1}t^{\alpha}\partial_{1}v_+)=0.
	\end{align}
	for all $s\geq s_{j}$. That is, there exists a nonzero polynomial of $t_{p}^{k_p+1}\partial_{p}$ with the smallest degree $n_{j}\in\Z_{+}$
	which annihilates $t_{p}^{sk_p+j+1}t^{\alpha}\partial_{p}v_+$ for sufficiently large $s$.
	
	Suppose there exists $j\in\{k_p+1,k_p+2,\dots,2k_p\}$ such that $n_{j}\geq1$. Consider (\ref{the-first-equation-1})
	for $2s$, where $sk_p> \text{max}\{s_{j}k_p,N\} $, which leads to
	\begin{align*}
	0&=g_{j}(t_{p}^{k_p+1}\partial_{p})(t_{p}^{2s{k_p}+j+1}t^{\alpha}\partial_{1}v_+)\\
	&=\big(a_{j,0}+a_{j,1}\left(t_{p}^{k_p+1}\partial_{p}-\lambda_{p}\right)+\cdots+a_{j,n_{j}}\left(t_{p}^{k_p+1}\partial_{p}-\lambda_{p}\right)^{n_{j}}\big) (t_{p}^{2sk_p+j+1}t^{\alpha}\partial_{1}v_+)\\
	&=\sum_{l=0}^{n_{j}}b_{s,l}t_p^{(2s+l)k_p+j+1}t^{\alpha}\partial_1v_+,
\end{align*}
where
\begin{equation*}
	b_{s,l}=a_{j,l}\prod_{i=0}^{l-1}\big((2s+i)k_p+j+1-\delta_{p,1}(k_p+1)\big).
\end{equation*}
	
	Since $sk_p> \text{max}\{s_{j}k_p,N\} $, we have the following bracket relation 
	\begin{align*}
	0 &=\big[t_{p}^{sk_p + 1}\partial_{p},g_{j}(t_{p}^{k_p + 1}\partial_{p})(t_{p}^{sk_p + j + 1}t^{\alpha}\partial_{1})\big]v_+\\
	&=\left[t_{p}^{sk_p + 1}\partial_{p},\sum_{l=0}^{n_{j}}a_{j,l}\left(\prod_{i=0}^{l-1}\big((s+i)k_p+j+1-\delta_{p,1}(k_p+1)\big) \right )t_p^{(s+l)k_p+j+1}t^{\alpha}\partial_1\right]v_+\\
	&=\sum_{l=0}^{n_{j}}c_{s,l}t_p^{(2s+l)k_p+j+1}t^{\alpha}\partial_1v_+,
\end{align*}
where
\begin{equation*}
	c_{s,l}=a_{j,l}\big((s+l)k_p+j+1-\delta_{p,1}(sk_p+1)\big)\prod_{i=0}^{l-1}\big((s+i)k_p+j+1-\delta_{p,1}(k_p+1)\big).
\end{equation*}
	
		Denote by
	\begin{align*}
		&f_{j,1}(y)=a_{j,n_j}\big(yk_p+j+1-\delta_{p,1}(yk_p+1)\big)\prod_{i=0}^{n_{j}-1}\big((2y+i)k_p+j+1-\delta_{p,1}(k_p+1)\big),\\
		&f_{j,2}(y)=a_{j,n_j}\big((y+n_j)k_p+j+1-\delta_{p,1}(yk_p+1)\big)\prod_{i=0}^{n_{j}-1}\big((y+i)k_p+j+1-\delta_{p,1}(k_p+1)\big).
	\end{align*}
	We claim that $f_{j,1}$ and $f_{j,2}$ are linearly independent since
\begin{equation*}
	f_{j,1}\left(-\frac{j+1}{2k_p}+\delta_{p,1}\frac{k_p+1}{2k_p}\right)=0,~	f_{j,2}\left(-\frac{j+1}{2k_p}+\delta_{p,1}\frac{k_p+1}{2k_p}\right)\neq0.
\end{equation*}
	Thus, we can find a nonzero polynomial $h_{j}(y)$ of degree less than $n_{j}$ such that
	\[h_{j}(t_{p}^{k_p+1}\partial_{p}).(t_{p}^{s'k_p+j+1}t^{\alpha}\partial_{1}v_+)=0\]
	for sufficient large $s'$, which is a contradiction to the minimality of $n_{j}$. Hence $n_{j}=0$ for all
	$j\in\{k_p+1,k_p+2,\dots,2k_p\}$, and choose 
	\begin{equation*} 
		M=\text{max}\{N+(s_j+3)k_p \ \mid \  2k_p \geq  j \geq k_p+1 \}.
	\end{equation*}This completes the proof.
\end{proof}

\begin{proposition}\label{key11}
	Let $V$ be a nonzero $\fg$-module. Assume that there are  some $k_{1},k_{2},\dots,$ $k_{n}\in\N$ such that each of $t_{1}^{k_{1}+1}\partial_{1},t_{2}^{k_{2}+1}\partial_{2},\dots, t_{n}^{k_{n}+1}\partial_{n}$
	acts locally finitely on $V$. Let $v_+$ and $N$ be the same vector and positive integer as in Lemma \ref{L5.1}, such that  
	\begin{equation*}
		t_i^{k+1}\partial_iv_+=0,~\forall 1\le i\le n,~k\geq N.
	\end{equation*}
	Then there exists some positive integer $M$ such that $\mathfrak{g}_{\ge M}v_+=0$.
\end{proposition}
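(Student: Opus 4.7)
The plan is to bootstrap Lemma \ref{CL-1} into a uniform bound on the multi-indices $\beta$ for which $t^{\beta}\partial_{q}v_{+}\ne 0$. Since $\fg_{\ge M}$ is spanned by monomials $t^{\beta}\partial_{q}$ with $|\beta|\ge M+1$, the desired conclusion $\fg_{\ge M}v_{+}=0$ is equivalent to showing the set $\mathcal{I}=\{\beta\in\Z_{+}^{n}\mid t^{\beta}\partial_{q}v_{+}\ne 0\ \text{for some}\ q\}$ is bounded in size.

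First I would apply Lemma \ref{CL-1} with $\alpha=0$ to each pair $(p,q)$ and set $M_{0}=\max_{p,q}M(p,q,\mathbf{0})$, yielding $t_{p}^{k+1}\partial_{q}v_{+}=0$ for all $k>M_{0}$ and all $1\le p,q\le n$. This disposes of $\beta$ supported on a single coordinate. Next I would induct on $d(\beta)=|\{i:\beta_{i}\ne 0\}|$: assuming there is $M_{r}$ with $t^{\beta}\partial_{q}v_{+}=0$ whenever $d(\beta)\le r$ and $|\beta|\ge M_{r}+1$, I would handle $d(\beta)=r+1$ by splitting into cases according to whether $\beta$ has some coordinate $\beta_{p}$ that dominates the others. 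When it does, Lemma \ref{CL-1} applied with $\alpha=\beta-\beta_{p}\epsilon_{p}$ concludes the vanishing directly. When all nonzero coordinates of $\beta$ are of comparable size, I would use the commutator identity
\[
[t_{i}^{k_{i}+1}\partial_{i},t^{\beta}\partial_{q}]=\beta_{i}\,t^{\beta+k_{i}\epsilon_{i}}\partial_{q}-\delta_{i,q}(k_{i}+1)\,t^{\beta+k_{i}\epsilon_{i}}\partial_{q}/\text{(adjustments)},
\]
applied to $v_{+}$, together with $t_{i}^{k_{i}+1}\partial_{i}v_{+}=\lambda_{i}v_{+}$, to express $t^{\beta+k_{i}\epsilon_{i}}\partial_{q}v_{+}$ inside the finite-dimensional space $\C[t_{i}^{k_{i}+1}\partial_{i}]\cdot(t^{\beta}\partial_{q}v_{+})$. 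This produces nontrivial linear dependencies among the shifted vectors $\{t^{\beta+mk_{i}\epsilon_{i}}\partial_{q}v_{+}\}_{m\ge 0}$, which, combined with the inductive hypothesis applied to the larger shifted monomial, forces $t^{\beta}\partial_{q}v_{+}=0$ once $|\beta|$ exceeds a new threshold $M_{r+1}$. Setting $M=M_{n}$ gives the desired integer.

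The main obstacle is exactly the uniformity issue: Lemma \ref{CL-1} provides a bound $M(p,q,\alpha)$ that a priori depends on $\alpha$, whereas we need a single $M$ valid across all relevant $\alpha$'s. The escape is the commutation step above, which controls how vanishing propagates between monomials: because the commuting family $\{t_{i}^{k_{i}+1}\partial_{i}\}_{i=1}^{n}$ acts jointly locally finitely on $V$ and $v_{+}$ is their joint eigenvector (Lemma \ref{L5.1}), the $\alpha$-dependence in the proof of Lemma \ref{CL-1} can be absorbed into a dependency among monomials that the induction has already killed. A careful bookkeeping of the shift indices $s_{j}$ appearing in the proof of Lemma \ref{CL-1} together with the induction on $d(\beta)$ yields the uniform $M$, and since $d(\beta)\le n$, the induction terminates after at most $n$ steps, so the final $M$ is finite.
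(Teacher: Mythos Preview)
Your plan has a genuine circularity that the paper's proof avoids. In your inductive step for $d(\beta)=r+1$, the ``comparable coordinates'' case hinges on the claim that the inductive hypothesis applies to the shifted monomials $t^{\beta+mk_i\epsilon_i}\partial_q v_+$. But shifting by $k_i\epsilon_i$ does not decrease $d$: if $\beta_i\ne 0$ then $d(\beta+mk_i\epsilon_i)=r+1$ for all $m$, so the hypothesis on $d\le r$ is unavailable. The only remaining route to kill these shifted vectors is the ``dominant coordinate'' case, i.e.\ Lemma~\ref{CL-1} with $\alpha=\beta-\beta_p\epsilon_p$. But there $\alpha$ still has $r$ nonzero coordinates and ranges over infinitely many values as $\beta$ varies, so you are back to the uniformity problem you flagged. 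In short, the ``escape'' you describe---absorbing the $\alpha$-dependence into monomials already killed by the induction---never actually reaches monomials with smaller $d$, so nothing has been killed yet.

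The paper's argument sidesteps this by inducting on $n$ rather than on $d(\beta)$. For each $i$ the subalgebra $\fg(i)=\mathrm{Span}\{t^\alpha\partial_j:\alpha_i=0,\ j\ne i\}\cong\mathcal{W}_{n-1}^+$ acts on $V$, and the inductive hypothesis yields a single bound $M'$ with $t^\alpha\partial_j v_+=0$ whenever $|\alpha|>M'$ and at least one coordinate of $\alpha$ vanishes. The key remaining step is then purely algebraic: any $t^\gamma\partial_q$ with $|\gamma|>7M'$ can be realised as (a scalar multiple of) a commutator $[t^{\alpha'}\partial_a,\,t^{\beta'}\partial_b]$ in which \emph{both} factors have some coordinate equal to zero and size exceeding $M'$; hence both annihilate $v_+$, and so does their bracket. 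This commutator-of-two-killed-elements mechanism is what your sketch is missing---you bracket with the eigenvector operators $t_i^{k_i+1}\partial_i$, which only pushes degrees up, whereas the paper brackets two annihilating elements to produce the target monomial directly.
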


\begin{proof}
	We prove the proposition by induction on $n$. For $n=1$, it is trivial.	
	\bigskip
	
	Let $n=2$. According to Lemma $\ref{CL-1}$, it is easy to see that there exists some positive integer $N_1, N_2 \in \mathbb{N} \ (N_1<N_2)$ such that
	\begin{equation*}
		t_1^k\partial_1v_+=	t_2^k\partial_2v_+=t_1t_2^k\partial_2v_+=t_1^kt_2\partial_1v_+=0,~\forall k>N_1,
	\end{equation*}
	\begin{equation*}
		t_1^lt_2^r\partial_iv_+=t_1^rt_2^l\partial_iv_+=0, \quad \forall  i=1,2,~l>N_2, \ \text{and} \ 1\le r\le N_1.
	\end{equation*}
	We $\mathbf{claim}$ that
	\begin{equation*}
		t_1^{k_1}t_2^{k_2}\partial_1v_+=t_1^{k_1}t_2^{k_2}\partial_2v_+=0
	\end{equation*}
	if $k_1+k_2>2N_2$.  Note that we must have $k_1>N_2$ or $k_2>N_2$. Without loss of  generality, we can assume that $k_1>N_2$.  If $k_2\le N_1$, then it is clearly that  $	t_1^{k_1}t_2^{k_2}\partial_1v_+=t_1^{k_1}t_2^{k_2}\partial_2v_+=0$. If $k_2> N_1$, we still have
	\begin{align*}
		&t_1^{k_1}t_2^{k_2}\partial_1v_+=[t_2^{k_2}\partial_2,t_1^{k_1}t_2\partial_1]v_+=0,\\
		&t_1^{k_1}t_2^{k_2}\partial_2v_+=[t_1^{k_1}\partial_1,t_1t_2^{k_2}\partial_2]v_+=0.
	\end{align*}
Therefore, we can choose $M = 2N_2$.  Then the $\mathbf{claim}$ holds. This completes the case $n=2$.
	\bigskip
	
	Now we consider the case that  $n>2$. Assume that the proposition holds for the case $n-1$. For fixed $1\le i\le n$, let
	\begin{equation*}
		\mathfrak{g}(i)=\mathrm{Span}_{\mathbb{C}}\{t^{\alpha}\partial_j\mid j\neq i~\text{and}~\alpha\in\mathbb{Z}_+^n~\text{with}~\alpha_i=0\}.
	\end{equation*}
	Then $\mathfrak{g}(i)$ is a subalgebra of $\mathfrak{g}$, which is isomorphic to $\mathcal{W}_{n-1}^+$. We know that $V$ can be viewed as a $\mathfrak{g}(i)$-module by the restriction of the action of $\mathfrak{g}$ to $\mathfrak{g}(i)$. For fixed $1\leq i\leq n$, it is from the induction hypothesis that there exists a positive integer $M_{i}$ such that $t^{\alpha}\partial_{j}.v_{+}=0$ for all $j\neq i$ and $\alpha\in\Z_{+}^{n}$ with $|\alpha|>M_{i}$, $\alpha_{i}=0$. Let $M'=\max\{M_{i}\mid i=1,2,\dots,n\}$. Then we have
	\[t^{\alpha}\partial_{i}v_{+}=0\]
	for all $i=1,2,\dots,n$ and $\alpha\in\Z_{+}^{n}$ with $|\alpha|>M'$, $\alpha_{1}\cdots\alpha_{i-1}\alpha_{i+1}\cdots\alpha_{n}=0$.
		\bigskip
	
	Take $M=7M'$. Let $\gamma\in\mathbb{Z}_+^n$ with $|\gamma|>M$. We $\mathbf{claim}$ that
	\begin{equation*}
		t^{\gamma}\partial_iv_+=0,~\forall 1\le i\le n.
	\end{equation*}
	Without loss of generality, we only prove the case that $i=1$. 
	If $\gamma_2,\gamma_3\le 2M'$, then
	\begin{equation*}
		\gamma_1+\gamma_4+\cdots+\gamma_n>3M'.
	\end{equation*}
	It is obvious that one can choose some $\alpha,\beta\in\Z_{+}^{n}$ such that 
	\[\alpha_{2}=\alpha_{3}=\beta_{2}=\beta_{3}=0,\quad \alpha_{1}+\beta_{1}=\gamma_{1}+1,\quad \alpha_{j}+\beta_{j}=\gamma_{j} \ \text{for} \ 4\leq j\leq n,\]
	and 
	\[|\alpha|>M',\quad |\beta|>M',\quad \alpha_{1}\neq \beta_{1}.\]
Then 
	\begin{equation*}
		t^{\alpha}t_2^{\gamma_2}\partial_1v=t^{\beta}t_3^{\gamma_3}\partial_1v_+=0.
	\end{equation*}
	It follows that 
	\begin{equation*}
		t^{\gamma}\partial_1v=\frac{1}{\beta_1-\alpha_1}\left[	t^{\alpha}t_2^{\gamma_2}\partial_1,t^{\beta}t_3^{\gamma_3}\partial_1\right]v_+=0.
	\end{equation*}
	If $\gamma_2>2M'$, then
	\begin{equation*}
		t_2^{M'+1}t_3^{\gamma_3}\cdots t_n^{\gamma_n}\partial_2v=t_1^{\gamma_1}t_2^{\gamma_2-M'}\partial_1v_+=0,
	\end{equation*}
	since $\gamma_2-M'>M'$. It follows that
	\begin{equation*}
		t^{\gamma}\partial_1v_+=\frac{1}{\gamma_2-M'}\left[t_2^{M'+1}t_3^{\gamma_3}\cdots t_n^{\gamma_n}\partial_2,t_1^{\gamma_1}t_2^{\gamma_2-M'}\partial_1\right]v_+=0.
	\end{equation*}
	Similarly, one can show that 	$t^{\gamma}\partial_1v=0$ if $\gamma_3>2M'$.	This completes the proof.
\end{proof}

\begin{thm}\label{thmmain}
	Let $V$ be a simple $\fg$-module. Then the following statements are equivalent:\\
	(1) there are positive integers $k_1,k_2,\dots,k_{n}$ such that
	each of $t_{1}^{k_{1}+1}\partial_{1},t_{2}^{k_{2}+1}\partial_{2},\dots,$ $t_{n}^{k_{n}+1}\partial_{n}$
	acts locally finitely on $V$;\\
	(2) $V$ is a smooth $\fg$-module;\\
	(3) $V$ is either isomorphic to the induced module $\mathrm{Ind}_{\fg_{\geq0}}^{\fg}E$ for some simple smooth $\fg_{\geq0}$-module $E$ with height $\ell_{E}\geq2$, or isomorphic to the unique simple quotient of the tensor $\fg$-module $F(P_{0},M)$ for some simple $\gl_{n}$-module $M$.
\end{thm}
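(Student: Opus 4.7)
The plan is to establish the cycle $(1) \Rightarrow (2) \Rightarrow (3) \Rightarrow (1)$.

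For $(1) \Rightarrow (2)$, Proposition \ref{key11} yields a nonzero vector $v_+ \in V$ and an integer $M \in \N$ with $\fg_{\geq M} v_+ = 0$. The set of smooth vectors $V_{sm} := \{v \in V \mid \fg_{\geq N} v = 0 \text{ for some } N\}$ is a $\fg$-submodule: if $\fg_{\geq N} v = 0$ and $x \in \fg_k$, then for any $y \in \fg_{\geq \max(N,\, N-k)}$ one has $yxv = [y,x]v + xyv = 0$, since $[y,x] \in \fg_{\geq N}$ kills $v$ and $yv = 0$. Because $V$ is simple and $V_{sm} \ni v_+$ is nonzero, $V_{sm} = V$. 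The implication $(2) \Rightarrow (3)$ is precisely Theorem \ref{result}.

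For $(3) \Rightarrow (1)$ I would work case by case. If $V$ is a simple quotient of $F(P_0, M)$ for some simple $\gl_n$-module $M$, take any $k_i \geq 1$. Using \eqref{Action1} together with the identity $t^\gamma \overline{\partial^\alpha} = (-1)^{|\gamma|}\gamma!\binom{\alpha}{\gamma}\overline{\partial^{\alpha-\gamma}}$ (vanishing when $\gamma \succ \alpha$), a direct calculation yields
\[
(t_i^{k_i+1}\partial_i) \circ (\overline{\partial^\alpha} \otimes v) = \overline{\partial^{\alpha - k_i\epsilon_i}} \otimes (c_1 v + c_2 E_{ii} v)
\]
for explicit scalars $c_1, c_2$ depending on $\alpha_i$ and $k_i$, interpreted as $0$ when $\alpha_i < k_i$. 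Iteration drives $\alpha_i$ below $k_i$ in at most $\lceil \alpha_i / k_i \rceil$ steps, so $t_i^{k_i+1}\partial_i$ acts locally nilpotently on $F(P_0, M)$ and hence on $V$. If instead $V \cong \mathrm{Ind}_{\fg_{\geq 0}}^{\fg} E$ with $E$ a simple smooth $\fg_{\geq 0}$-module of height $\ell_E \geq 2$, I set $k_i = \ell_E$, so that $X_i := t_i^{k_i+1}\partial_i \in \fg_{\ell_E}$ annihilates $E$ and commutes with $\partial_j$ for every $j \neq i$; hence it suffices to analyze $X_i^m(\partial_i^p \otimes e)$. The identity
\[
X_i \partial_i^p = \sum_{s=0}^{p}\binom{p}{s}\partial_i^{p-s} Y_s, \qquad Y_s := (-\mathrm{ad}\,\partial_i)^s(X_i) \in \fg_{k_i - s}
\]
(with $Y_s = 0$ for $s > k_i + 1$) shows that $X_i(\partial_i^p \otimes e)$ is a sum of terms with $\partial_i$-exponent strictly less than $p$: the $s = 0$ summand vanishes because $Y_0 = X_i \in \fg_{\ell_E}$ annihilates $e$, while for $s \geq 1$ the factor $\partial_i^{p-s}$ has exponent $< p$. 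A short induction on $p$ then gives $X_i^{p+1}(\partial_i^p \otimes e) = 0$, yielding local nilpotency of $X_i$ on $V$.

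The main technical hurdle is the induced-module case: verifying that the maximum $\partial_i$-exponent of surviving terms in $X_i^m v$ strictly decreases at each iteration and eventually becomes negative, at which point the contribution vanishes. The tensor-module case and the direction $(1) \Rightarrow (2)$ are direct once Proposition \ref{key11} and \eqref{Action1} are in hand.
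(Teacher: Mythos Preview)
Your proof is correct. The direction $(1)\Rightarrow(2)$ via the smooth-vector submodule is equivalent to the paper's PBW argument, and $(2)\Rightarrow(3)$ is Theorem~\ref{result} as you note. The only substantive difference is in closing the cycle: the paper proves $(2)\Rightarrow(1)$ directly and uniformly, whereas you prove $(3)\Rightarrow(1)$ by splitting into the tensor-module and induced-module cases with explicit computations.

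The paper's argument for $(2)\Rightarrow(1)$ avoids the case split entirely. For any nontrivial simple smooth $V$ one has $\ell_V\geq 1$, and there is always a surjection $\mathrm{Ind}_{\fg_{\geq0}}^{\fg}V^{(\ell_V)}\twoheadrightarrow V$ regardless of whether the induced module is simple. Setting $k_i=\ell_V$ and invoking Lemma~\ref{NOT-submodule} with $k=\ell_E=\ell_V$ gives $\fg_{\ell_V}\cdot F_{\leq r}\subseteq F_{\leq r-1}$, so each $t_i^{\ell_V+1}\partial_i$ is locally nilpotent on the induced module and hence on $V$. This subsumes both of your cases at once: your induced-module computation is essentially a hands-on re-derivation of this special instance of Lemma~\ref{NOT-submodule}, and your tensor-module computation is the same phenomenon viewed through the isomorphism of Lemma~\ref{iso}. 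What your approach buys is concreteness---you see the nilpotency explicitly, with the $\partial_i$-exponent dropping step by step---at the cost of duplicated work; the paper's route is shorter because the filtration estimate has already been established in Section~4.
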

\begin{proof}
	We already know	$(2) \Longleftrightarrow (3)$ from Theorem \ref{main-result}, Lemma \ref{n=1} and Theorem \ref{the second main-result}. It is enough to prove $(1) \Rightarrow(2)$ and $(2) \Rightarrow(1)$.

	$(1)\Rightarrow(2)$. 
		Let $V$ be a simple $\fg$-module. It from Proposition $\ref{key11}$ that there exists a nonzero vector $v_+\in V$ and $M\in\mathbb{Z}_{+}$ such that $\fg_m.v_+= 0$ for all $m\geq M$.	By the PBW theorem and simplicity of $V$, any vector of $V$ can be written as a linear combination of elements of the form $$g_{i_{1}}g_{i_{2}}\cdots g_{i_{k}}v_+,$$
		where $g_{i_j}\in\fg_{i_j}$ for all $i_j\in\mathbb{Z}$ and $i_j<M$. Note that $[\fg_{i},\fg_{j}]\subseteq\fg_{i + j}$. For any $g_{i_{1}}g_{i_{2}}\cdots g_{i_{k}}v_+\in V$, take $T = M+\vert i_{1}\vert+\cdots+\vert i_{k}\vert$, then it is easy to check $$\fg_{n}(g_{i_{1}}g_{i_{2}}\cdots g_{i_{k}}v_+)=0,$$
		for all $n\geq T$. Thus $V$ is a smooth $\fg$-module.
		
	$(2) \Rightarrow(1)$. Suppose that $V$ is a simple smooth $\fg$-module. Without loss of generality, we may assume $V$ is not trivial. Then $\ell_{V}\geq1$. We know that there exists a surjective $\fg$-module homomorphism $\sigma:\mathrm{Ind}_{\geq0}^{\fg}V^{(\ell_{V})}\rightarrow V$. Choose $k_{1}=k_{2}=\dots=k_{n}=\ell_{V}$. It follows from Lemma \ref{NOT-submodule} by taking $E=V^{(\ell_V)}$ that $t_{i}^{\ell_{V}+1}\partial_{i}$ acts locally finitely on $\mathrm{Ind}_{\geq0}^{\fg}V^{(\ell_{V})}$ for all $i=1,2,\dots,n$, which implies that $t_{i}^{\ell_{V}+1}\partial_{i}$ acts locally finitely on $V$.
\end{proof}

\section{Examples of simple smooth $\fg$-modules}
In this section, we will give two families of simple smooth $\fg$-modules.
\subsection{Examples of simple smooth $\fg$-modules}
Let $n\geq2$, let $\varphi:\fg_{\geq0}\rightarrow\C$ be a Lie homomorphism,
and let $\C v_{\varphi}$ be a one dimensional
$\fg_{\geq0}$-module defined by
\begin{equation}\label{Example-5-1}
	x.v_{\varphi}=\varphi(x)v_{\varphi}\quad \text{for}\quad x\in\fg_{\geq0}.\end{equation}
Then we have the induced $\fg$-module
\[W(\varphi)=\text{Ind}_{\fg_{\geq0}}^{\fg}\C v_{\varphi}.\]
It is clear that $W(\varphi)\cong\C[\partial_{1},\partial_{2},\dots,\partial_{n}]$ as vector spaces.\\
Note that $t_{i}\partial_{i}\in\fg_{0}$ for $i=1,2,\dots,n$.
Then $[\fg_{0},\fg_{j}]=\fg_{j}$ for $j\in\N$. Since $\fg_{0}\cong\gl_{n}$, we have
\[\fg_{\geq0}=[\fg_{\geq0},\fg_{\geq0}]\oplus\C\omega_{n},\]
where $\omega_{n}=\sum_{i=1}^{n}t_{i}\partial_{i}$.
This implies that
\[\varphi(x)=0\quad \text{for}\quad x\in[\fg_{\geq0},\fg_{\geq0}].\]

\begin{proposition}\label{Exam-1} The $\fg$-module
$W(\varphi)$ is a simple smooth $\fg$-module with height $\ell_{W(\varphi)}=1$
if and only if $\varphi(\omega_n)\neq0$. Moreover, if $\varphi(\omega_n)=0$,
then $\mathrm{Span}_{\C}\{\partial^{\alpha}v_{\varphi}\mid\alpha\in\Z_{+}^{n},\ |\alpha|\geq1\}$ is
the unique maximal submodule of $W(\varphi)$.
\end{proposition}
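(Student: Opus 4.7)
The plan is to reduce the proposition to the already-established Lemmas \ref{iso} and \ref{Useful-lem}, which will do most of the heavy lifting. First, because $[\fg_0, \fg_j] = \fg_j$ for $j \geq 1$, we have $\fg_{\geq 1} \subseteq [\fg_{\geq 0}, \fg_{\geq 0}]$, so $\varphi$ vanishes on $\fg_{\geq 1}$. Hence $\fg_{\geq 1} \cdot v_\varphi = 0$, giving $v_\varphi \in W(\varphi)^{(1)}$ and $\ell_{W(\varphi)} \leq 1$. Smoothness of $W(\varphi)$ is then immediate from Lemma \ref{NOT-submodule} applied with $E = \C v_\varphi$ and $\ell_E = 1$. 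Since $W(\varphi)$ is infinite-dimensional by the PBW theorem, it is not the trivial $\fg$-module, so by Proposition \ref{Property-Prop}, whenever $W(\varphi)$ is simple we have $\ell_{W(\varphi)} \geq 1$, and hence $\ell_{W(\varphi)} = 1$.

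Next I set $a := \varphi(\omega_n)/n$, so that $\varphi(t_i \partial_i) = a$ for every $i$ and $\varphi(t_i \partial_j) = 0$ for $i \neq j$. By Lemma \ref{iso}, $W(\varphi) \cong F(P_0, (\C v_\varphi)^\tau)$, where $(\C v_\varphi)^\tau$ is the one-dimensional $\gl_n$-module on which each $E_{ii}$ acts as $a + 1$; equivalently, $(\C v_\varphi)^\tau \cong V(\delta_0, n(a+1))$. A one-dimensional $V(\delta_0, b)$ is isomorphic to some $V(\delta_r, r)$ only when $r \in \{0, n\}$, and this happens precisely when $a = -1$ (for $r = 0$) or $a = 0$ (for $r = n$). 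Lemma \ref{Useful-lem} then yields a trichotomy: if $a \notin \{0, -1\}$, part (1) gives that $W(\varphi)$ is itself simple; if $a = -1$ (so $\varphi(\omega_n) = -n \neq 0$), part (2) identifies $W(\varphi)$ with the simple module $P_0$; and if $a = 0$, part (4) says $F(P_0, V(\delta_n, n))/L_n(P_0, n)$ is one-dimensional, so $L_n(P_0, n)$ is a proper nonzero submodule and $W(\varphi)$ fails to be simple. This establishes the \emph{if and only if} statement.

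For the \emph{moreover} clause, in the case $a = 0$ I would identify $L_n(P_0, n)$ explicitly under the isomorphism $W(\varphi) \cong F(P_0, V(\delta_n, n))$, where $\partial^\alpha v_\varphi$ corresponds to $\overline{\partial^\alpha} \otimes (\epsilon_1 \wedge \cdots \wedge \epsilon_n)$. Since $\epsilon_k \wedge \epsilon_{i_2} \wedge \cdots \wedge \epsilon_{i_n}$ is nonzero exactly when $k$ is the unique index missing from $\{i_2, \ldots, i_n\}$, a direct computation gives $L_n(P_0, n) = \bigl(\sum_m \partial_m P_0\bigr) \otimes (\epsilon_1 \wedge \cdots \wedge \epsilon_n)$, which under the iso is exactly $J := \mathrm{Span}_{\C}\{\partial^\alpha v_\varphi : |\alpha| \geq 1\}$; uniqueness as a maximal submodule is already contained in the proof of Lemma \ref{Useful-lem}. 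The main obstacle is precisely this bookkeeping in the last step: tracking the twist $\tau$, recognizing $(\C v_\varphi)^\tau$ inside the family $\{V(\delta_r, r)\}$, and matching $L_n(P_0, n)$ with $J$. Once these identifications are in place, the proposition follows immediately from Lemmas \ref{iso} and \ref{Useful-lem}.
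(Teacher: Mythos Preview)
Your proof is correct and follows essentially the same route as the paper: both identify $(\C v_\varphi)^\tau$ with the one-dimensional $\gl_n$-module $V(0,n+\lambda)$ (your $V(\delta_0,n(a+1))$ with $\lambda=na$), invoke Lemma~\ref{iso} to get $W(\varphi)\cong F(P_0,(\C v_\varphi)^\tau)$, and then read off simplicity and the maximal submodule from Lemma~\ref{Useful-lem}. Your explicit trichotomy $a\notin\{0,-1\}$, $a=-1$, $a=0$ and the identification of $L_n(P_0,n)$ with $J$ spell out steps that the paper leaves implicit, but the underlying argument is the same.
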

\begin{proof} Let  $\varphi(\omega_n)=\lambda$.
Note that $\C v_{\varphi}$ is a $\fg_{\geq0}$-module with $\fg_{\geq1}.v_{\varphi}=0$. This implies that $\C v_{\varphi}$ is a $\gl_{n}$-module by the Lie algebra isomorphism $\fg_{\geq0}/\fg_{\geq1}\cong\gl_{n}$. 
Then it follows from Lemma \ref{iso} that \[W(\varphi)=\mathrm{Ind}_{\fg_{\geq0}}^{\fg}\C v_{\varphi}\cong F(P_{0},(\C v_{\varphi})^{\tau}),\]as $\fg$-modules. It is easy to see that 
\[(\C v_{\varphi})^{\tau}\cong V(0,n+\lambda),\]
as $\gl_{n}$-modules. 
Thus there exists a $\fg$-module isomorphism
\[\Phi:W(\varphi)\rightarrow F(P_0, V(0,n+\lambda))\]
with $\Phi(v_{\varphi})=\bar{1}\otimes 1$. From Lemma \ref{Useful-lem},
we see that the $\fg$-module $W(\varphi)$ is simple if and only if $\lambda\neq0$.
When $\lambda=0$, we know that \[\mathrm{Span}_{\C}\{\partial^{\alpha}v_{\varphi}\mid\alpha\in\Z_{+}^{n},\ |\alpha|\geq1\}\]
is the unique maximal submodule of $W(\varphi)$.
\end{proof}

\subsection{Examples of simple smooth $\W_{2}^{+}$-modules}
In this subsection, we focus on the case that $\fg=\W_{2}^{+}$ and
study a class of simple Whittaker modules over $\fg_{\geq0}$.
Using those $\fg_{\geq0}$-modules,
by Theorem \ref{g_0-induce-module-height-geq2} we can construct a class of simple smooth $\fg$-modules.

Recall that \[\fg_{0}=\text{Span}_{\C}\{t_{1}\partial_{1},t_{1}\partial_{2},t_{2}\partial_{1},t_{2}\partial_{2}\}\cong\gl_{2}\]
and
\[\fg_{1}=\text{Span}_{\C}\{t_{1}^{2}\partial_{1},t_{1}^{2}\partial_{2},t_{1}t_{2}\partial_{1},t_{1}t_{2}\partial_{2},
t_{2}^{2}\partial_{1},t_{2}^{2}\partial_{2}\}.\]
It is easy to check that $\fg_{1}$ is a $\fg_{0}$-module with the irreducible decomposition:
\[\fg_{1}=V(3)\oplus V(1),\]
where \[V(3)=\mathrm{Span}_{\C}\{t_{1}^{2}\partial_{2},\ 2t_{1}t_{2}\partial_{2}-t_{1}^{2}\partial_{1},\ 2t_{2}^{2}\partial_{2}-4t_{1}t_{2}\partial_{1},\ -6t_{2}^{2}\partial_{1}\}\] and
\[V(1)=\mathrm{Span}_{\C}\{t_{1}^{2}\partial_{1}+t_{1}t_{2}\partial_{2},\ t_{2}^{2}\partial_{2}+t_{1}t_{2}\partial_{1}\}.\]
For convenience, we write
\[\re=t_{1}\partial_{2},\quad \ri=t_{1}\partial_{1}+t_{2}\partial_{2},\quad \rh=t_{1}\partial_{1}-t_{2}\partial_{2},\quad \rf=t_{2}\partial_{1},\]
\[\mathrm{p}_{0}=t_{1}^{2}\partial_{2},\quad \mathrm{p}_{1}=2t_{1}t_{2}\partial_{2}-t_{1}^{2}\partial_{1},\quad 
\mathrm{p}_{2}=2t_{2}^{2}\partial_{2}-4t_{1}t_{2}\partial_{1},\quad \mathrm{p}_{3}=-6t_{2}^{2}\partial_{1},\]
and
\[\mathrm{q}_{0}=t_{1}^{2}\partial_{1}+t_{1}t_{2}\partial_{2},\quad \mathrm{q}_{1}=t_{2}^{2}\partial_{2}+t_{1}t_{2}\partial_{1}.\]

Let $\phi:\fg_{\geq1}\rightarrow\C$ be a Lie algebra homomorphism. Note that $[\fg_{\geq1},\fg_{\geq1}]=\fg_{\geq2}$. We know that $\phi(x)=0$ for all $x\in\fg_{\geq2}$.
Define a one-dimensional $\fg_{\geq1}$-module $\C_{\phi}=\C w_{\phi}$ by
\[x.w_{\phi}=\phi(x)w_{\phi}\quad\text{for all}\quad x\in\fg_{\geq1}.\]
Then we have the induced module
\[M(\phi)=\text{Ind}_{\fg_{\geq1}}^{\fg_{\geq0}}\C w_{\phi},\]
which is called the universal quasi-Whittaker $\fg_{\geq0}$-module of type $\phi$. 
Write 
\[M(\phi)_{\phi}=\{v\in M(\phi)\mid x.v=\phi(x)v\ \text{for all}\ x\in\fg_{\geq1}\}.\]

The proof of the following result is exactly the same with that of \cite[Lemma 3.1]{CC}. 
We omit it.
\begin{lemma}\label{6.2}
	Let $\phi:\fg_{\geq1}\rightarrow\C$ be a nonzero Lie algebra homomorphism. 
	If $\phi(\mathrm{q}_{0})=\phi(\mathrm{q}_{1})=0$, then the following statements are equivalent:\\  
	(1) $M(\phi)$ is reducible;\\ 
	(2) there exists some nonzero $(a_{1},a_{2},a_{3},a_{4})\in\C^{4}$ such that $(a_{1}\re+a_{2}\ri+a_{3}\rh+a_{4}\rf)w_{\phi}\in M(\phi)_{\phi}$;\\
	(3) $\det A_{\phi}=0$, where
	\[A_{\phi}:=\left(\begin{array}{cccc}
		0 & -\phi(\mathrm{p}_{0}) & -3\phi(\mathrm{p}_{0}) & -\phi(\mathrm{p}_{1})\\
		-3\phi(\mathrm{p}_{0}) & -\phi(\mathrm{p}_{1}) & -\phi(\mathrm{p}_{1}) & -\phi(\mathrm{p}_{2})\\
		-4\phi(\mathrm{p}_{1}) & -\phi(\mathrm{p}_{2}) & \phi(\mathrm{p}_{2}) & -\phi(\mathrm{p}_{3})\\
		-3\phi(\mathrm{p}_{2}) & -\phi(\mathrm{p}_{3}) & 3\phi(\mathrm{p}_{3}) & 0
	\end{array}\right)\in\text{M}_{4\times4}(\C).
	\]
\end{lemma}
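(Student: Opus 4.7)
The plan is to analyze submodules of $M(\phi)$ via their Whittaker vectors, using the PBW identification $M(\phi)\cong \U(\fg_0)w_\phi$ and the filtration $F_k:=\U(\fg_0)^{\leq k}w_\phi$. The key identity
\[
(y-\phi(y))(Lw_\phi)=\phi([y,L])\,w_\phi,\qquad y\in\fg_1,\ L\in\fg_0,
\]
comes from $yLw_\phi=(Ly+[y,L])w_\phi$ together with $[y,L]\in\fg_1$ and the fact that $\fg_{\geq 1}$ acts on $w_\phi$ through $\phi$. This identity is the engine that connects the three conditions.

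First I would dispose of $(2)\Leftrightarrow(3)$ as a direct linear-algebra reduction. For $v=(a_1\re+a_2\ri+a_3\rh+a_4\rf)w_\phi$, the Whittaker condition $(y-\phi(y))v=0$ for each basis element $y$ of $\fg_1$ collapses, via the identity above, to $\sum_j a_j\,\phi([y,X_j])=0$, where $X_1,\ldots,X_4$ runs through $\re,\ri,\rh,\rf$. Because $V(1)$ is a $\fg_0$-submodule of $\fg_1$, each bracket $[\mathrm q_i,X_j]$ lies in $V(1)$, so the hypothesis $\phi(\mathrm q_0)=\phi(\mathrm q_1)=0$ makes the two equations indexed by $y\in\{\mathrm q_0,\mathrm q_1\}$ automatic. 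The remaining four equations, indexed by $y\in\{\mathrm p_0,\mathrm p_1,\mathrm p_2,\mathrm p_3\}$, assemble into the system $A_\phi a=0$; directly computing the sixteen brackets $[\mathrm p_i,X_j]$ in $\W_2^+$ and applying $\phi$ reproduces exactly the displayed matrix $A_\phi$, so a nontrivial solution exists iff $\det A_\phi=0$.

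For $(1)\Leftrightarrow(2)$, the direction $(2)\Rightarrow(1)$ is short: if $v:=Lw_\phi\ne 0$ is a Whittaker vector with $L\in\fg_0\setminus\{0\}$, then $\fg_{\geq 1}$ acts on $v$ by scalars, so the submodule it generates equals $\U(\fg_0)v$; this cannot contain $w_\phi$ because $L$ is a non-unit in the domain $\U(\fg_0)$, giving reducibility. For $(1)\Rightarrow(2)$, let $N\subsetneq M(\phi)$ be nonzero and pick $v\in N$ of minimal PBW-degree $d$. An induction on $d$ built on the identity above shows that $y-\phi(y)$ strictly lowers the PBW-filtration on $M(\phi)$ for every $y\in\fg_{\geq 1}$; minimality therefore forces $v$ to be a Whittaker vector, and $d\geq 1$ (else $w_\phi\in N$). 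Decompose $v=v_d+v_{d-1}+\cdots+v_0$ along the associated graded $\mathrm{gr}\,M(\phi)\cong S^\bullet(\fg_0)$: the symbol of $y-\phi(y)$ is the derivation $D_y^{\mathrm{top}}$ of $S^\bullet(\fg_0)$ extending $X\mapsto\phi([y,X])$, and the top part of $(y-\phi(y))v=0$ forces $D_y^{\mathrm{top}}(v_d)=0$ for every $y\in\fg_1$. Since a nonzero homogeneous polynomial of positive degree killed by a family of derivations must take its variables from the common kernel of the corresponding linear functionals, there exists $L\in\fg_0\setminus\{0\}$ with $\phi([y,L])=0$ for all $y\in\fg_1$; then $Lw_\phi$ is the desired degree-$1$ Whittaker vector, yielding (2).

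The main obstacle is the top-degree analysis in $(1)\Rightarrow(2)$: proving that $y-\phi(y)$ strictly lowers PBW-degree, and then recovering a nonzero linear Whittaker vector from the top component $v_d$ through the derivation $D_y^{\mathrm{top}}$ on the associated graded polynomial ring. The computations for $(2)\Leftrightarrow(3)$ are routine bookkeeping, but the hypothesis $\phi(\mathrm q_0)=\phi(\mathrm q_1)=0$ is essential there, cutting the naive $6\times 4$ system down to the stated $4\times 4$ matrix via the $\fg_0$-invariance of $V(1)$.
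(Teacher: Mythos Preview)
The paper omits the proof entirely, pointing to \cite[Lemma~3.1]{CC} and noting that the argument there carries over verbatim. Your proposal supplies exactly the kind of argument that reference contains: proper submodules contain Whittaker vectors (via a minimal PBW-degree element and the fact that $y-\phi(y)$ lowers filtration), the existence of a Whittaker vector of degree $\geq 1$ forces one of degree $1$ (via the symbol in $\mathrm{gr}\,M(\phi)\cong S^\bullet(\fg_0)$ and the constant-coefficient derivations $D_y$), and the degree-$1$ Whittaker condition is the linear system $A_\phi a=0$. This is the standard quasi-Whittaker argument and matches what the paper invokes.

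Two small points of precision worth tightening. First, your ``decomposition $v=v_d+\cdots+v_0$ along the associated graded'' is not literally available in a filtered module; what you actually use (and what works) is only the symbol $v_d:=\mathrm{gr}(v)\in S^d(\fg_0)$, together with the computation that the induced map $F_d/F_{d-1}\to F_{d-1}/F_{d-2}$ is $D_y$. Second, for $(2)\Rightarrow(1)$ you should note that $\fg_{\geq 2}$ is an ideal of $\fg_{\geq 0}$ annihilating $M(\phi)$ (an easy induction on PBW degree, since $\phi|_{\fg_{\geq 2}}=0$ and $[\fg_0,\fg_{\geq 2}]\subseteq\fg_{\geq 2}$), so that $\U(\fg_{\geq 0})v=\U(\fg_0)v$ really does hold for any Whittaker vector $v$; then the left ideal $\U(\fg_0)L$ is proper because PBW degrees are additive in the domain $\U(\fg_0)$. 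With these clarifications your proof is complete and essentially coincides with the argument the paper cites.
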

\begin{remark}
	For example one can choose $\phi(\mathrm{p}_{i})=1$ and $\phi(\mathrm{q}_{j})=0$ for $i=0,1,2,3$ and $j=0,1$, 
	which gives $\det A_{\phi}=-4$. Then $M(\phi)$ is a simple quasi-Whittaker $\fg_{\geq0}$-module of type $\phi$. 
	Furthermore, we see that $M(\phi)$ is a simple smooth $\fg_{\geq0}$-module with height $2$.
\end{remark}

It follows from Theorem \ref{g_0-induce-module-height-geq2} we have
\begin{proposition}
	Let $\phi:\fg_{\geq1}\rightarrow\C$ be a nonzero Lie algebra homomorphism given in Lemma  \ref{6.2} with $\det A_{\phi}\ne0$. 
	Then the induced $\fg$-module $\mathrm{Ind}_{\fg_{\geq0}}^{\fg}M(\phi)$ is
	a simple smooth $\fg$-module with height $2$.
\end{proposition}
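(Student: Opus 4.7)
The plan is to reduce the claim to Theorem \ref{g_0-induce-module-height-geq2} by verifying that $E := M(\phi)$ is a simple smooth $\fg_{\geq 0}$-module with $\ell_E = 2$. Simplicity is immediate from the equivalence $(1)\Leftrightarrow(3)$ of Lemma \ref{6.2}, since $\det A_\phi \neq 0$ by hypothesis. For smoothness, PBW together with $\fg_{\geq 0} = \fg_0 \oplus \fg_{\geq 1}$ yields $M(\phi) = \mathcal{U}(\fg_0).w_\phi$, and a routine induction on the degree of $x \in \mathcal{U}(\fg_0)$ using $[\fg_0, \fg_j] \subseteq \fg_j$ together with $\fg_{\geq 2}.w_\phi = 0$ shows $\fg_{\geq N}.x.w_\phi = 0$ once $N$ exceeds the degree of $x$ plus $2$.

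For the height of $M(\phi)$, the vanishing $\phi|_{\fg_{\geq 2}} = 0$ (which holds because $\fg_{\geq 2} = [\fg_{\geq 1}, \fg_{\geq 1}]$ and $\phi$ is a Lie homomorphism to $\C$) gives $w_\phi \in M(\phi)^{(2)}$, so $\ell_{M(\phi)} \leq 2$. If instead $\ell_{M(\phi)} \leq 1$, then by Proposition \ref{G-0-module} the nonzero subspace $M(\phi)^{(1)}$ is a $\fg_{\geq 0}$-submodule of the simple module $M(\phi)$, hence coincides with all of $M(\phi)$; in particular $\fg_1.w_\phi = 0$, forcing $\phi|_{\fg_1} = 0$ and therefore $\phi \equiv 0$, contradicting the assumption that $\phi$ is nonzero. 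So $\ell_{M(\phi)} = 2$.

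With these three properties established, Theorem \ref{g_0-induce-module-height-geq2} gives simplicity of $F := \mathrm{Ind}_{\fg_{\geq 0}}^{\fg} M(\phi)$. Smoothness of $F$ is immediate from Lemma \ref{NOT-submodule}: every $v \in F$ lies in some $F_{\leq r}$, whence $\fg_{\geq k}.v \subseteq F_{\leq r-k+1} = 0$ whenever $k \geq r+2$. For the height of $F$, Lemma \ref{NOT-submodule} applied to $w_\phi \in F_0$ yields $\fg_{\geq 2}.w_\phi = 0$, so $\ell_F \leq 2$; the reverse inequality $\ell_F \geq 2$ follows by a minimal-height-vector argument entirely parallel to Lemmas \ref{minimality-height-vector}--\ref{minimality-lht-vector}: if $0 \neq v \in F^{(1)}$ is chosen with minimal $\mathrm{ht}(v)$, the case $\mathrm{ht}(v) = 0$ is ruled out by $M(\phi)^{(1)} = 0$, while the case $\mathrm{ht}(v) \geq 1$ is eliminated by testing against $\fg_1 \subseteq \fg_{\geq 1}$ to derive top-component relations that force $v$ into $E$. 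The main (and rather mild) obstacle is this final height computation; everything else is a direct assembly of Lemma \ref{6.2}, Lemma \ref{NOT-submodule}, and Theorem \ref{g_0-induce-module-height-geq2}.
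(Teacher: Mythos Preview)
Your approach is correct and matches the paper's, which simply invokes Theorem~\ref{g_0-induce-module-height-geq2} after the preceding Remark establishes that $M(\phi)$ is a simple smooth $\fg_{\geq 0}$-module of height~$2$. You supply considerably more detail than the paper does---in particular the verification that $\ell_F = 2$, which the paper leaves implicit---though your final phrase ``force $v$ into $E$'' should read ``force the top components $v_\alpha$ into $M(\phi)^{(1)} = 0$,'' since testing $x \in \fg_1$ against $v \in F^{(1)}$ gives $x.v_\alpha = 0$ for $|\alpha| = m$ (the top degree), whence $v_\alpha \in M(\phi)^{(1)} = 0$, contradicting $\mathrm{ht}(v)=m$.
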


\noindent {\bf Acknowledgement}. Z. Li is partially supported by National Natural Science
Foundation of China (Grant No. 12001468), and Nanhu Scholars Program of XYNU(No. 2019021). S. Liu is partially supported by Nanhu Scholars Program of XYNU (No. 012111). R. L\"u is partially supported by National Natural Science Foundation of China (Grant No.
12271383), K. Zhao is  partially supported
by   NSERC
(311907-2020), Y. Zhao is partially supported by National Natural Science Foundation of China (Grant No.
12301040), and Nanhu Scholars Program of XYNU (No. 2021030).

\

Zhiqiang Li, School of Mathematics and Statistics,
Xinyang Normal University, Xinyang 464000, P. R. China. Email address: lzq06031212@xynu.edu.cn

\vskip 5pt

Cunguang Cheng, School of Mathematical Sciences, Hebei Normal University, Shijiazhuang 050016, P. R. China. Email address: chengcg2024@163.com 

\vskip 5pt

Shiyuan Liu, School of Mathematics and Statistics,
Xinyang Normal University, Xinyang 464000, P. R. China. Email address: liushiyuanxy@163.com

\vskip 5pt

Rencai L\"u,  Department of Mathematics, Soochow University, Suzhou 215006, P. R. China. Email address: rlu@suda.edu.cn

\vskip 5pt

Kaiming Zhao,  Department of Mathematics, Wilfrid Laurier University, Waterloo, ON, Canada N2L3C5. Email address: kzhao@wlu.ca

\vskip 5pt

Yueqiang Zhao, School of Mathematics and Statistics,
Xinyang Normal University, Xinyang 464000, P. R. China. Email address: yueqiangzhao@163.com

\begin{thebibliography}{AAGBP}
\bibitem[CC]{CC} Y. Cai, Q. Chen, {\it Quasi-Whittaker modules over the conformal Galilei algebras}, Linear multilinear  Algebra, 65 (2017), 313-324.

\bibitem[CSYZ]{CSYZ} Y. Chen, Y. Yao, R. Shen, K. Zhao, {\it Simple smooth modules over the Ramond algebra and applications to vertex operator superalgebras},  Math. Z.,  310 (2025), no. 2, Paper No. 28.


\bibitem[GG]{GG} D. Gao, Y. Gao, {\it Representations of the Planar Galilean Conformal Algebra}, Commun. Math. Phys., 391 (2022), 199-221.

\bibitem[GS]{GS} D. Grantcharov, V. Serganova, {\it Simple weight modules with finite weight multiplicities over the Lie algebra of polynomial vector fields}, J. Reine. Angew. Math., 792 (2022), 93-114.

\bibitem[JLZ]{JLZ} Z. Ji, G. Liu, Y. Zhao,
{\it The category of quasi-Whittaker modules over the Schrödinger algebra,}
Linear Algebra Appl., 708 (2025) 1-11.

\bibitem[KL1]{KL1} D. Kazhdan and G. Lusztig, {\it Tensor structures arising from affine Lie algebras. I,}  
J. Amer. Math. Soc., 6 (1993) 905-947.

\bibitem[KL2]{KL2}D. Kazhdan and G. Lusztig, {\it Tensor structures arising from affine Lie algebras. II,}
J. Amer. Math. Soc., 6 (1993) 949-1011.

\bibitem[L]{L} H. Li, {\it Local systems of vertex operators, vertex superalgebras and modules.} J. Pure Appl. Algebra, 109 (1996), 143-195.


\bibitem[LLZ]{LLZ} G. Liu, R. Lu, K. Zhao, {\it Irreducible Witt modules from Weyl modules and
	$\gl_n$-modules}, J. Algebra, 511 (2018), 164-181.

\bibitem[LPX]{LPX} D. Liu, Y. Pei, L. Xia, {\it Simple restricted modules for Neveu-Schwarz algebra}, J. Algebra, 546 (2020), 341-356.

\bibitem[LPXZ]{LPXZ} D. Liu, Y. Pei, L. Xia, K. Zhao, {\it Irreducible modules over the mirror Heisenberg-Virasoro algebra}, Commun. Contemp. Math., 24 (2022).

\bibitem[LS]{LS} J. Li, J. Sun, {\it super $\mathscr{W}$-algebra $\mathscr{SW}(\frac{3}{2},\frac{3}{2})$ of Neveu-Schwarz type}, J. Algebra, 661 (2023), 807-830.

\bibitem[LZ]{LZ} G. Liu, K. Zhao, {\it New irreducible weight modules over Witt algebras
	with infinite dimensional weight spaces}, Bull. Lond. Math. Soc., 47 (2014), 789-795.

\bibitem[M]{M} O. Mathieu, {\it Classification of Harish-Chandra modules over the Virasoro algebras}, Invent. Math., 107 (1992), 225–234.

\bibitem[MNTZ]{MNTZ} Y. Ma, K. Nguyen, S. Tantubay, K. Zhao, {\it Characterization of simple smooth modules}, J. Algebra., 636 (2023), 1–19.

\bibitem[MZ]{MZ} V. Mazorchuk, K. Zhao, {\it Simple Virasoro modules which are locally finite over a positive part}, Selecta Math. (N.S.), 20 (2014), 839-854.

\bibitem[PS]{PS} I. Penkov, V. Serganova, {\it Weight representations of the polynomial Cartan type Lie algebras $W_n$ and $\overline{S}_n$}, Math. Res. Lett., 6 (1999), 397-416.

\bibitem[R]{R} A.N. Rudakov, {\it Irreducible representations of infinite-dimensional Lie algebras of Cartan type},
Izv. Ross. Akad. Nauk Ser. Mat., 38 (1974) 836-866 (Russian); English translation in Math. USSR, Izv., 8 (1974) 836-866.

\bibitem[SS]{SS} I. Singer, S. Sternberg, {\it The infinite groups of Lie and Cartan Part I, (The transitive groups)}, J. Anal. Math., 15 (1965), 1-114.

\bibitem[TYZ]{TYZ} H. Tan,  Y. Yao,   K. Zhao, {\it Simple restricted modules over the   Heisenberg-Virasoro algebra as VOA modules}, Proc. REMS, https://doi.org/10.1017/prm.2024.132.


\bibitem[TZ]{TZ} H. Tan, K. Zhao, {\it $\mathcal{W}_{n}^{+}$ and $\mathcal{W}_{n}$-module structures on $U(\mathfrak{h})$},
J. Algebra, 424 (2015), 357-375.


\bibitem[XL]{XL} Y. Xue, R. Lü, {\it Classification of simple bounded weight modules of the Lie algebra of vector fields on $\mathbb{C}^n$}, Isr. J. Math., 253 (2023), 445–468. 

\bibitem[XZ]{XZ} Y. Xue, K. Zhao, {\it Simple representations of the Fermion-Virasoro algebras}, J. Algebra, Vol.675, 1 August 2025,   133-155.

\bibitem[Y]{Y}M. Yakimov, {\it Categories of modules over an affine Kac-Moody algebra and finiteness of the 
	Kazhdan-Lusztig tensor product,} J. Algebra, 319 (2008), no. 8, 3175-3196.

\bibitem[ZL]{ZL} Y. Zhao, G. Liu, {\it Whittaker category for the Lie algebra of polynomial vector fields}, J. Algebra, 605 (2022), 74-88.

\end{thebibliography}
\end{document}